\documentclass[11pt,a4paper]{amsart}

\usepackage[utf8]{inputenc}
\usepackage[T1]{fontenc}   % Police contenant les caractères français
\usepackage{geometry}     % Définir les marges
\usepackage[french, english]{babel} % Placez ici une liste de langues, la
\usepackage{amsmath}
\usepackage{amsfonts}
\usepackage{amsthm}

\usepackage{amscd}
\usepackage{hyperref}
\usepackage{xcolor}
\usepackage{enumitem}

\usepackage{graphicx}
\usepackage{caption}
\usepackage{subcaption}

\usepackage{pstricks,pst-plot}

  \usepackage{amssymb}
  \usepackage{verbatim}
  \newif\ifpdf
  \ifpdf
   \usepackage[pdftex]{graphicx}
   \usepackage[pdftex]{hyperref}
  \else
   \usepackage{graphicx}
  \fi

\usepackage[all]{xy}

\usepackage{enumitem}

\newcommand{\R}{\mathbb{R}}

\newcommand{\N}{\mathbb{N}}

\newcommand{\C}{\mathbb{C}}

\newcommand{\D}{\mathbb{D}}

\newcommand{\rcal}{\mathcal{R}}

\newcommand{\prw}{\mathrm{proj}_2}

\newtheorem{thm}{Theorem}[section]
\newtheorem{prop}[thm]{Proposition}
\newtheorem{coro}[thm]{Corollary}
\newtheorem{defi}[thm]{Definition}
\newtheorem{lem}[thm]{Lemma}

\newtheorem{rem}[thm]{Remark}

\newcommand{\id}{\mathrm{Id}}

\newcommand{\hfrak}{\mathfrak{h}}

\newcommand{\bcal}{\mathcal{B}}

\newcommand{\lcal}{\mathcal{L}}

\newcommand{\re}{\mathrm{Re}}
\newcommand{\im}{\mathrm{Im}}

\newcommand{\proj}{\mathrm{proj}}

\numberwithin{equation}{section}

\newcommand{\eps}{\epsilon}

\author{Luka Boc Thaler}
\thanks{The author was supported by the research program P1-0291 and projects N1-0237 and J1-70033, all from ARRS, Republic of Slovenia}

\address{L. Boc Thaler: Faculty of Mathematics and Physics, University of Ljubljana, SI--1000 Ljubljana, Slovenia. Institute of Mathematics, Physics and Mechanics, Jadranska 19, 1000 Ljubljana, Slovenia.} \email{luka.boc@fmf.uni-lj.si}

\title{Spiral Domains and Lavaurs-Type Renormalization for Parabolic Germs of $\C^2$}

\begin{document}

\maketitle

\begin{abstract}
We study the local dynamics of holomorphic germs $P:\mathbb C^2\to\mathbb C^2$ tangent to the identity whose 2-jet at the origin is $(J_0^2P)(z,w)= (z-z^2,w+w^2+bz^2)$.
We prove the existence of parabolic domains for all values of the parameter $b$, showing in particular that for $b>1/4$ there are spiral domains, i.e. parabolic domains whose orbits converge to the origin without being tangent to any fixed direction. We then establish a Lavaurs-type renormalization theorem for a class of non-skew-product maps, extending earlier results known in the skew-product case. As applications, we obtain new topological invariants for such germs and construct a Fatou component with both rank-one and rank-zero limit maps. We also give an example of a polynomial self-map of $\mathbb C^3$ with an elliptic fixed point admitting a wandering domain with non-contractible limit set.
\end{abstract}

\section{Introduction}
One of the central problems in local holomorphic dynamics is to describe the behavior of orbits near a fixed point. In this paper we consider discrete dynamical systems arising from the iteration of a holomorphic germ
$f:(\mathbb{C}^n,0)\to(\mathbb{C}^n,0)$ with $f(0)=0$, and we study the local dynamics in a neighborhood of the origin. A classical approach is to seek a holomorphic change of coordinates that conjugates $f$ to a simpler normal form. When $f$ is holomorphically linearizable, the local dynamics is completely determined by the linear part $df_0$. It is well known that the existence of such a linearization depends strongly on resonance and small-divisor phenomena, and in particular on the arithmetic properties of the eigenvalues of $df_0$, see \cite{abate,bracci,yoccoz}.

A fundamental situation in which one cannot, in general, obtain a linear normal form on a full neighborhood of the fixed point is the \emph{parabolic} case, namely when $f$ is \emph{tangent to the identity}:
$$
df_0=\mathrm{Id}\qquad\text{and}\qquad f\not\equiv \mathrm{Id}.
$$
In one complex dimension, the local dynamics of such germs is well understood, thanks to the celebrated \emph{Leau--Fatou flower theorem}. It asserts that there exist simply connected attracting and repelling petals, each having $0$ on its boundary, whose union covers a punctured neighborhood of the origin. Moreover, on each petal the germ (respectively, its local inverse) is holomorphically conjugate to a translation.

In higher dimension the dynamics of germs tangent to the identity is substantially more intricate, and a complete description on a neighborhood of the fixed point is known only in special settings. Significant progress has been obtained in dimension two, where partial analogues of the Leau--Fatou picture guarantee the existence of invariant one-dimensional objects and, in certain situations, two-dimensional stable manifolds, see \cite{hakim1998analytic,vivas2012,lopez2020stable,ecalleresurgentes}. The most general result for a germ $f$ tangent to the identity in $\mathbb{C}^2$ is due to Abate \cite{abate2001residual} which states that either $f$ admits a curve of fixed points through the origin, or else $f$ possesses \emph{parabolic curves}, i.e. one-dimensional complex manifolds with $0$ in their boundary that are invariant under $f$ and on which all forward orbits converge to the origin. In particular, such germs always exhibit nontrivial stable dynamics.

However, except in special cases, these results do not provide a description of the dynamics on a full neighborhood of the fixed point comparable to the one-dimensional flower theorem, and it is known that no general statement of this form can hold for all germs tangent to the identity. One such special case was studied in \cite{abate2011poincare}, where the authors show that, for the time-one map of the flow determined by a homogeneous vector field, the dynamics in a full neighborhood of the origin can be described by studying the geodesics of an associated meromorphic connection.

\medskip
A phenomenon closely related to parabolic dynamics is the \emph{parabolic implosion} \cite{Lavaurs, shi} which has many important consequences in complex dynamic of one variable. For example, it was used to prove the discontinuity of Julia sets near the parabolic parameter. The techniques of parabolic implosion have also been extended to higher dimensions, where they arise naturally in the study of semi-parabolic dynamics and germs tangent to the identity, see \cite{bedford2012parabolic, ALR}. In particular, they have been used to construct polynomial skew-products of $\C^2$ with wandering Fatou components, see \cite{ABDPR,astorg2019wandering,ABT}. Motivated by these results in the skew-product dynamics, the problem of non-autonomous parabolic implosion in  one variable was further studied in \cite{Viv20,HSV, AB}.

\medskip

In this paper, we continue the investigation of the local dynamics of holomorphic maps $P: \C^2 \to \C^2$, whose 2-jet at the origin is 
\begin{equation}\label{2jet}
(J_0^2P)(z,w)=(z-z^2,w+w^2+bz^2),
\end{equation}
which was initiated in \cite{ABT} in the special case of skew products, i.e. maps of the form $P(z,w)=(p(z), q(z,w))$. In that setting, it was shown that for $b\in(1/4,\infty)$ such maps always have parabolic domains where orbits converge non-tangentially to the origin. Furthermore, we proved the existence of a type of parabolic implosion, in which the renormalization limits are different from previously known cases. This had a number of consequences, including the construction of new topological invariants and the existence of wandering domains near the origin with rank-one limit maps. The aim of the present paper is to extend these results beyond the class of skew-products.

\subsection{Statement of results}
Let us first recall that a \emph{parabolic domain} of $P$ is a maximal invariant connected domain $\Omega\subset \mathbb{C}^2$ such that the origin belongs to the boundary of $\Omega$ and the iterates $P^n|_\Omega$ converge locally uniformly on $\Omega$ to the origin. Moreover, we say that a parabolic domain is tangent to a direction $v$ if every point in the domain is attracted to the origin along trajectories tangent to $v$. Invariant parabolic domains which are not tangent to any direction are also sometimes called \emph{spiral domains}. Such domains were first constructed by Rivi \cite[Proposition 4.4.4]{rivi} and later studied by Rong \cite[Theorem 1.4]{Rong}. 

We begin by discussing the existence of parabolic domains for maps whose 2-jet at the origin is equal to \eqref{2jet}, a question that depends only on the parameter $b$:

\begin{thm}\label{th:parbas}
	Let $P$ be a holomorphic endomorphism of $\mathbb{C}^2$ whose 2-jet at the origin is equal to \eqref{2jet}. Then,
	\begin{enumerate}
		\item if $b \in (\frac{1}{4},+\infty)$, the map $P$ has an invariant
		spiral domain.
		\item if $b\in\mathbb{C} \backslash (\frac{1}{4},+\infty)$, the map $P$ has an invariant parabolic domain $\Omega$ that is tangent to one of its non-degenerate characteristic directions. 
	\end{enumerate}
\end{thm}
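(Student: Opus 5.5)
The plan is to blow up along the $z$--axis and reduce both parts of Theorem~\ref{th:parbas} to a one--dimensional non-autonomous problem, in which the variable $\log n$ plays the role of time. First, the characteristic directions of the quadratic part $H(z,w)=(-z^2,w^2+bz^2)$ are found by solving $H(v)=\lambda v$. There is the direction $[0:1]$ with $\lambda=1$, and the directions $[1:u]$ with $u$ a root of
$$u^2+u+b=0,\qquad u_\pm=\tfrac{-1\pm\sqrt{1-4b}}{2},$$
both with $\lambda=-1$.

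\textbf{Blown-up coordinates.} Work in $(z,u)$ with $w=uz$, and let $z$ lie in a small attracting petal of $z\mapsto z-z^2$ around the positive real axis. A direct expansion gives
$$z_1=z-z^2+O(|z|^3),\qquad u_1=u+z\,(u^2+u+b)+O(|z|^2),$$
with constants uniform for $u$ in compact sets. In the petal $z_n=\frac1n+O\!\left(\frac{\log n}{n^2}\right)$, so $\sum_n|z_n|^2<\infty$, while $\sum_n z_n$ diverges like $\log n$. The $u$--dynamics is therefore a perturbation, with summable errors, of the time-$\log n$ flow of the Riccati equation
$$\dot u=u^2+u+b.$$
This flow is a one-parameter family of M\"obius maps. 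In the coordinate $\zeta=(u-u_+)/(u-u_-)$ it becomes linear,
$$\dot\zeta=(2u_++1)\zeta .$$
Accordingly, the recursion becomes
$$\zeta_{n+1}=\zeta_n\bigl(1+(2u_++1)z_n+O(|z_n|^2)\bigr).$$
Since $u$ stays bounded, $w=uz\to0$. The correction $w$ also only enters the $z$--recursion at order $|z|^3$, so the petal in $z$ is preserved. This is where the non-skew-product structure is absorbed.

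\textbf{Part (1), $b>1/4$.} Here $2u_++1=i\kappa$ with $\kappa=\sqrt{4b-1}>0$ real. Then
$$\log|\zeta_{n+1}|-\log|\zeta_n|=-\kappa\,\mathrm{Im}\,z_n+O(|z_n|^2),$$
and $\mathrm{Im}\,z_n=O(\log n/n^2)$, so these increments are summable. Choosing an open set of initial conditions with $|\zeta|<r<1$ and $z$ deep enough in the petal keeps $|\zeta_n|<r'<1$ for all $n$. Thus the orbit stays in a compact set of $u$ avoiding $u_-$, and the open set is forward invariant with $P^n\to0$ locally uniformly; take $\Omega$ to be the union of its forward preimages containing it. On the other hand,
$$\arg\zeta_n=\kappa\log n+O(1)\to\infty,$$
so $u_n=w_n/z_n$ has no limit and the orbit is tangent to no direction, i.e.\ $\Omega$ is a spiral domain. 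The main obstacle is making the error bookkeeping uniform on an open set. This needs an $O(|z|^2)$ bound in the $u$--recursion valid for $u$ in a fixed compact set, together with a sharp asymptotic for $\mathrm{Im}\,z_n$ in the petal. I would handle it with Fatou coordinates of $z\mapsto z-z^2+O(z^3)$, applied to the non-autonomous first coordinate.

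\textbf{Part (2), $b\notin(1/4,\infty)$.} If $b\ne1/4$, the two numbers $\pm\sqrt{1-4b}$ are not purely imaginary, so one of them, say $2u_++1$, has negative real part. Then
$$|\zeta_{n+1}|\le|\zeta_n|\bigl(1-c/n\bigr)$$
for some $c>0$, up to summable errors, so $\zeta_n\to0$, i.e.\ $u_n\to u_+$. Equivalently, one can invoke Hakim's theorem: $[1:u_+]$ is a non-degenerate characteristic direction whose director $-(2u_++1)$ has positive real part, so it admits an attracting basin tangent to it. Either way, this basin is the required parabolic domain tangent to $[1:u_+]$.

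For $b=1/4$ there is a double root $u=-\tfrac12$, and the ODE becomes $\dot u=(u+\tfrac12)^2$. The direction $[1:-\tfrac12]$ is then degenerate. I would check whether the direction $[0:1]$ yields a basin, but in the coordinate $v=z/w$ it appears repelling, so I do not expect it to. I would instead use an attracting petal of the parabolic flow $\dot u=(u+\tfrac12)^2$ in the $u$--plane, in log time. This gives a sectorial domain in which $u_n\to-\tfrac12$, and again the summable $O(|z_n|^2)$ errors are harmless. The difficulty is that the contraction toward $-\tfrac12$ is only of size $(\log n)^{-1}$, so the invariance estimate needs more care than in the other cases. The resulting parabolic domain is tangent to $[1:-\tfrac12]$, although this direction is degenerate, so it falls outside the literal wording of (2). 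I would consult the paper's definition of non-degenerate to see how the case $b=1/4$ is meant to be classified.
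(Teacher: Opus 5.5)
Your proposal is correct in outline. For part (1) it follows a genuinely different technical route from the paper.

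\textbf{Comparison with the paper.} The paper first conjugates by $(z,w)\mapsto(z,w+\zeta^{\pm}(z))$ to straighten one Hakim curve, then blows up and passes to $U=-1/u$. There the quadratic term $zu^2$ produces a non-summable additive term $\frac1n$. The paper controls it with the rotated variable $Y_n=e^{\sum_{k\le n}(ix/k+\Theta_k)}U_n$ and Abel summation of $\sum_k k^{-(1-ix)}$.

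Your M\"obius coordinate $\zeta=(u-u_+)/(u-u_-)$ linearizes the whole Riccati field $u^2+u+b$ at once. The only remaining errors are $O(|z_n|^2)$ and $\im z_n=O(\log n/n^2)$, both summable, so no oscillatory cancellation is needed. The trade-off is that the paper's straightening makes its base curve exactly invariant. Your coordinate instead leaves a small additive error unless both curves are straightened (see below).

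For part (2) the paper gives no argument of its own; it cites Hakim and Vivas through \cite[Proposition 3.2]{ABT}. Your use of Hakim for $b\neq\frac14$ is the same. Your log-time parabolic-petal argument at $b=\frac14$ replaces the appeal to Vivas with a direct proof.

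\textbf{First correction: a missing additive error.} The recursion $\zeta_{n+1}=\zeta_n\bigl(1+(2u_++1)z_n+O(|z_n|^2)\bigr)$ is not right as written. The line $u=u_+$ is not invariant under the blown-up map; only the Hakim curves $u=\zeta^{\pm}(z)/z=u_{\pm}+O(z)$ (paper's notation) are. So there is also an additive $O(|z_n|^2)$ term, and your formula for $\log|\zeta_{n+1}|-\log|\zeta_n|$ is not uniform near $\zeta=0$.

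This is harmless. Write $\zeta_n=\Pi_nY_n$ with $\Pi_n=\prod_{k<n}\bigl(1+i\kappa z_k+O(|z_k|^2)\bigr)$. Then $|\Pi_n|$ is bounded above and below, $\arg\Pi_n=\kappa\log n+O(1)$, and $Y_{n+1}=Y_n+O(|z_n|^2)$. Now take $\frac r2<|\zeta_0|<r$ with $z_0$ deep in the petal. This gives $Y_n\to Y_\infty\neq0$, hence the spiralling. Some lower bound on $|\zeta_0|$ is genuinely needed, since orbits on the Hakim curve itself have $\zeta_n\to0$ and converge tangentially.

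Alternatively, use $\zeta=(w-\zeta^+(z))/(w-\zeta^-(z))$, which is $-e^{2ic\psi_z(w)}$ in the notation of Section~\ref{sec:erf}. It makes $\zeta=0$ and $\zeta=\infty$ invariant, so your multiplicative recursion becomes exact.

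\textbf{Second correction: the case $b=\frac14$.} The direction $[1:-\frac12]$ is not degenerate in the paper's sense. Indeed $P_2(1,-\frac12)=(-1,\frac12)=-(1,-\frac12)$, so $\lambda=-1\neq0$. What vanishes is its director $-(2u+1)$, and that is why Hakim's theorem does not apply. Your domain is therefore exactly of the kind asserted in (2).

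The invariance estimate you flag is routine. With $V=-1/(u+\frac12)$ one gets $V_{n+1}=V_n+z_n+O(|z_n|^2|V_n|^2)$. Since $|V_n|=O(\log n)$ on the relevant set, the error is summable. Hence $\re V_n\to+\infty$ and $u_n\to-\frac12$.
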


Next, we focus on maps $P(z,w)=(p(z,w),q(z,w))$ whose $2$-jet at the origin is given by \eqref{2jet} and which additionally satisfy
$$
p(0,w)=0
\qquad\text{and}\qquad
\frac{\partial p}{\partial z}(0,w)=1.
$$
These conditions imply that the family of functions $p_w(z):=p(z,w)$ has a persistent parabolic fixed point at the origin, and they are needed for the application of parabolic renormalization techniques, see Remark \ref{zwk}. 
Up to conjugacy by an automorphism of $\mathbb{C}^2$, such maps can be reduced to the form
\begin{equation}\label{map1}
\left\{
\begin{array}{l}
p(z,w)= z-z^2+a_{3,0} z^3 +a_{2,1}z^2w+O(z^4,z^2w^2,z^3w), \\
q(z,w)= w+w^2+bz^2+b_{0,3} w^3+b_{3,0} z^3+O(\|(z,w)\|^4),
\end{array}
\right.
\end{equation}
where $a_{3,0}, a_{2,1}, b, b_{0,3}, b_{3,0} \in \mathbb{C}$.
\medskip

{\bf Notation:} {\it Throughout this paper, we will be using the notation $p(z,w)=p_0(z)+g(w)z^2+O(z^3w)$, where $g(w):=a_{2,1}w+O(w^2)$ is a holomorphic function, and let $q_z(w):=q(z,w)$. The origin is a parabolic fixed point of functions $p_0(z)$ and $q_0(w)$, and we denote their basin of attraction by $\mathcal{B}_{p_0}$ and $\mathcal{B}_{q_0}$, respectively.}
\medskip

Let $\theta:\mathcal{B}_{q_0}\rightarrow \C$ be a holomorphic function given by 
\begin{equation}\label{eq:1}
\theta(w):=\lim_{n\rightarrow\infty}\sum_{k=0}^{n-1}g(q_0^k(w))+a_{2,1}\ln{n}.
\end{equation}
This function is well defined because $q_0^k(w)=-\frac{1}{\phi_{q_0}^{\iota}(w)+k}+O(\ln k/k^2)$, where $\phi_{q_0}^{\iota}:\mathcal{B}_{q_0}\rightarrow \C$ is an \emph{incoming Fatou coordinate} of $q_0$, see \cite[Section 2]{ABT}.

\begin{defi} Let $P$ be of the form \eqref{map1}, and $\alpha,\sigma \in \C$. The generalized Lavaurs map of phase $\sigma$ and parameter $\alpha$ is defined as 
	\begin{equation}\label{glavaurs}
	\mathcal{L}(\alpha,\sigma; z,w):=\psi^o_{q_0}\left(\alpha\phi^{\iota}_{q_0}(w)+(1-\alpha)\phi^{\iota}_{p_0}(z)+(\alpha-1)\theta(w)+\sigma\right),
	\end{equation}
	where $\phi_{p_0}^\iota$ is the incoming Fatou coordinate of $p_0$, $\psi_{q_0}^o$ is the outgoing Fatou parametrization of $q_0$, and $\theta$ is given by \eqref{eq:1}.

	\end{defi}
	
Notice that, for $\alpha=1$, the map $w \mapsto \lcal(\alpha,\sigma; z,w)$ does not depend on $z$ and coincides with the classical Lavaurs map of phase $\sigma$ of the one-variable polynomial $q_0$.

\medskip

Finally, we introduce the following notation:

\begin{equation}\label{eq:c} 
	c:=\frac{\sqrt{4b-1}}{2}, \quad \alpha_0:=e^{\pi/c}, \quad \beta_0:=(b_{0,3}-a_{3,0}+a_{2,1})(\alpha_0-1),
\end{equation}
and observe that for $b>\frac{1}{4}$, we have $c>0$ and $\alpha_0>1$.

\medskip
The following is the main technical result of this paper.

\begin{thm}\label{th:maintech}
Let $P$ be a map of the form \eqref{map1} and let $h$ be a holomorphic function. Let $\alpha_0,\beta_0$ be as in \eqref{eq:c}, and assume that $b>\frac{1}{4}$ and $\beta_0 \in \R$. Let $M_n:= \lfloor(\alpha_0-1)n+ \beta_0\ln n \rfloor$ and $\rho_n:=\{(\alpha_0-1)n+ \beta_0\ln n \}$.
	\begin{align*}
		 P^{M_n}\left(p_0^n(z)+\frac{h(z,w)}{n^2}+o\left(\frac{1}{n^2}\right),w\right) = (p_0^{M_n+n}(z),\mathcal{L}(\alpha_0,\Gamma-\rho_n +(\alpha_0-1)h(z,w);z,w))+ o(1)
	\end{align*}
  
where the convergence is uniform on compact subsets of $\bcal_{p_0} \times \bcal_{q_0}$,
	and where $\Gamma$ is an explicit constant depending only on $a_{3,0}, a_{2,1},b,b_{0,3},b_{3,0}$.
	\end{thm}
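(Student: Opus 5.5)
The plan follows the skew-product argument of \cite{ABT}. The new feature is that the first coordinate now depends on $w$. The orbit is split into three phases. First, the point $(p_0^n(z)+h/n^2+o(1/n^2),w)$ is brought under $P^{M_n}$ into an ``incoming'' region where $z$ is of size $1/n$ and $w$ is still in a compact part of $\bcal_{q_0}$. Then it passes through a ``transit'' region near the origin, where the vector field approximation applies and the orbit spirals. Finally it exits into an ``outgoing'' region where $z$ is tiny and $w$ leaves the origin along the repelling direction of $q_0$.

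The first step is to control the $z$-coordinate along the whole orbit. Since $p(z,w)=p_0(z)+g(w)z^2+O(z^3w)$, I would work in the incoming Fatou coordinate $\phi_{p_0}^\iota$. Writing $Z_k$ for the first coordinate of the $k$-th iterate, this gives
\[
\phi^\iota_{p_0}(Z_{k+1})=\phi^\iota_{p_0}(Z_k)+1+g(W_k)+O(Z_k|W_k|).
\]
Hence $\phi^\iota_{p_0}(Z_k)$ equals $\phi^\iota_{p_0}(z)+n+k+\sum_{j<k}g(W_j)$ plus the initial correction coming from $h/n^2$, up to $o(1)$ errors. That correction contributes an additive $h$ in the Fatou coordinate, since $(\phi_{p_0}^\iota)'\sim 1/z^2$ and $z\sim1/n$. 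In the incoming phase, $W_j\approx q_0^j(w)$, and the sum of $g(W_j)$ produces $\theta(w)-a_{2,1}\ln(\cdot)$ by \eqref{eq:1}. This is where $\theta$ and the shift $(\alpha-1)\theta$ enter.

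In the transit phase, with $z\approx 1/(n+k)$, I would use the model system $\dot z=-z^2$, $\dot w=w^2+bz^2$. Setting $u=w/z$ and $t=-\ln z$ turns it into $u'=u^2+u+b$ with roots $-\tfrac12\pm ic$. So $u$ rotates, and the time to pass from $u=+\infty$ (incoming, $w$ comparatively large) to $u=-\infty$ multiplies $1/z$ by $e^{\pi/c}=\alpha_0$. This is where the $(\alpha_0-1)n$ iterates come from. I would carry out the approximation quantitatively with an approximate Fatou coordinate in $(z,w)$ for the third-order normal form, namely $\phi(z,w)$ built from the angle coordinate of $u$ plus logarithmic corrections. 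These corrections involve $b_{0,3},a_{3,0},a_{2,1}$ and give the $\beta_0\ln n$ term. The claim to prove is that this approximate coordinate is conjugated to translation by $1$ up to errors summable over $\sim n$ steps, for instance $O(|z|^2\ln)$ per step. Dependence of $p$ on $w$ (the $a_{2,1}z^2w$ term) is what distinguishes this from \cite{ABT}. I expect the hardest part to be handling it in the transit region, where $w$ is comparable to $z$, so that the errors stay $o(1)$ after about $\alpha_0 n$ steps. I would first try to absorb $g(w)z^2$ into a modified coordinate $\tilde z$ satisfying $\tilde z'=\tilde z-\tilde z^2+O(\tilde z^3)$, using $\sum g(W_j)$ bounded by logarithms. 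This also explains the $+a_{2,1}$ in $\beta_0$.

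In the outgoing phase, once $u$ is near $-\infty$ and $|w|$ is small but $\gg|z|$, $q(z,w)=q_0(w)+O(z^2)$. Then the outgoing Fatou coordinate $\phi^o_{q_0}$ advances by $1+o(1/\text{steps})$ per iterate. The final $w$ is therefore $\psi^o_{q_0}$ evaluated at the matched phase: the incoming value $\phi^\iota_{q_0}(w)$, the $z$-contribution $\phi^\iota_{p_0}(z)+h$ weighted by the ratio $(1-\alpha_0)$ or $(\alpha_0-1)$ dictated by the linear transit map, the $\theta$ term, a constant $\Gamma$ collecting all explicit constants from the matching, and $-\rho_n$ from taking the integer part. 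Matching the three phases, where each overlap is chosen as $|z|\sim n^{-1}$ with $|w|/|z|$ large at an $n$-dependent scale, gives the stated formula. The first coordinate is $p_0^{M_n+n}(z)+o(1)$ because $z\to0$ uniformly.
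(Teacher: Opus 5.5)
Your three-phase decomposition is the same as the paper's, which it inherits from \cite{ABT}: the incoming Fatou coordinate of $q_0$ for $k_n=\lfloor n^\nu\rfloor$ steps, an angle-type approximate Fatou coordinate through the eggbeater, and the outgoing Fatou coordinate of $q_0$ for the last $\ell_n$ steps. The gap is in the one step that is new for non-skew products, namely the feedback of $w$ into the first coordinate during the transit.

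You treat $\sum g(W_j)$ as something to be ``bounded by logarithms'' so that ``the errors stay $o(1)$''. In fact it is a main term. Along the orbit one has $\phi^\iota_{p_0}(\eps_{j+1})=\phi^\iota_{p_0}(\eps_j)+1-g(w_j)+O(\eps_jw_j)$. Some signs need fixing here: it is $1-g$, not $1+g$; the perturbation $h/n^2$ shifts the phase by $-h$; and $u=w/z$ runs from $-\infty$ to $+\infty$, since $w\approx-1/k$ on entry and $w\approx 1/\ell$ on exit.

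In the bulk of the eggbeater the accumulated drift $-\sum_{s<j}g(w_s)$ equals $-\theta(w_0)+a_{2,1}\ln n$ plus a $j$-dependent $O(1)$ profile. This drift changes each $\eps_j$ by $O(\ln n/n^2)$. Hence it shifts $W_{M_n-\ell_n}=W_{k_n}+\sum_j\big(\eps_j+\Lambda\eps_j^2+O(\eps_j^{2+\delta})\big)$ by $O(\ln n/n)$, and the exit value of $\phi^o_{q_0}$ by $O(\ln n)$. The $a_{2,1}\ln n$ part is exactly what the $+a_{2,1}$ in $\beta_0$ compensates, i.e.\ it fixes the choice of $M_n$. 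The $O(1)$ profile produces the $a_{2,1}\ln c$ and $a_{2,1}\int_0^{\pi/c}e^{-u}\ln\sin(cu)\,du$ terms in $\Gamma$.

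So you need an asymptotic formula with $o(1)$ error, uniform in $j$ and independent of the starting point. A logarithmic bound leaves an $O(\ln n)$ ambiguity in the final Fatou coordinate and does not even determine $M_n$. Recasting this as a coordinate change does not help. In the eggbeater $g(w)z^2$ is already $O(|z|^{2+\nu})$, so the normal form $\tilde z\mapsto\tilde z-\tilde z^2+O(\tilde z^3)$ says nothing: the cubic coefficient matters, just as $a_{3,0}$ does. Moreover, since $g$ depends on $w$, any $\tilde z$ that removes the drift must depend on $(z,w)$ and solve $\kappa\circ P-\kappa=-g(w)z^2$ along transit orbits, which is the same computation.

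The missing idea is that inside the eggbeater the $w$-orbit is determined by the approximate Fatou coordinate itself, to the needed precision and independently of the initial point. The paper inverts this coordinate (Lemmas \ref{lem:invphi} and \ref{lem:invcomp}):
\[
w_j=\Phi_{\eps_j}^{-1}(W_j)=-c\eps_j\cot(cW_j)-\frac{\eps_j}{2}+O\big(\eps_j^2\cot(cW_j),\eps_j^{1+\delta}\ln\tfrac{1}{|\eps_j|}\big).
\]
It then uses $W_j=\ln(1+\frac jn)+O(k_n^2/n^2)$, which follows from the almost translation property, and sums:
\[
\sum_{j=k_n}^{i-1}w_j=-\ln\left(\frac{n}{k_nc}\sin\left(c\ln\left(1+\frac in\right)\right)\right)-\frac12\ln\left(1+\frac in\right)+o(1).
\]
Together with $\sum_{j<k_n}g(q_0^j(w_0))=\theta(w_0)-a_{2,1}\ln k_n+o(1)$, this gives the explicit expansion of $\eps_j$ that feeds the Euler--Maclaurin computation of Lemma \ref{lem:sumeps}.

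Since $W_j$ depends on the $\eps_s$ and the $\eps_s$ depend on the $w_s$, this must sit inside a coupled induction (Lemma \ref{prop:eggb}). That induction also keeps $W_j$ and $\psi^\iota_{\eps_j}(w_j)$ in $\rcal_{\eps_j}$. It is where the absence of $zw^k$ terms in $p$ is used (Remark \ref{zwk}), so that the transit time does not depend on $w$.

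Two further points. You must redo the error function, because $z_1=p(z,w)$ now depends on $w$; this is the source of the $-a_{2,1}$ in $\Lambda$ and of the extra comparison in Lemma \ref{lem:dR}. Also, the achievable one-step defect is only $O(|z|^{2+\delta})$ with $\delta=2\nu-1$ in the normalization $W\mapsto W+z$. If you rescale to a translation by $1$, that is $O(|z|^{1+\delta})$, not $O(|z|^2\ln)$. This is what forces $\nu\in(\frac12,\frac23)$ for your matching scale.
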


The usefulness of results of this type is that, when applied successively, they can provide increasingly precise estimates for high iterates of $P$ in terms of the iterates of the maps $\lcal_z: w \mapsto \lcal(\alpha_0, \sigma; z,w)$. In this way, dynamical properties of $\lcal_z$ may be transferred to obtain information about the dynamics of $P$. In the case of skew products, this transfer is relatively straightforward because of the special form of the map, whereas in the present setting additional work is needed to justify such a transition.

\medskip 

Before stating some consequences of the main theorem, we introduce some additional terminology. Given real numbers $\alpha>1$ and $\beta\in\mathbb{R}$, we say that a strictly increasing sequence of positive integers $(n_k)_{k\geq 0}$ is \emph{$(\alpha,\beta)$-admissible} if its \emph{phase sequence} $(\sigma_k)_{k\geq 0}$, defined by $\sigma_k:= n_{k+1}-\alpha n_k-\beta\ln{n_k}$, is bounded.

A first consequence of Theorem \ref{th:maintech} is that, within our class of maps, the coefficient $b$ is a topological invariant.

\begin{coro}\label{top}
 	Let $P_1$ and $P_2$ be two maps of the form \eqref{map1} and assume that $g_i(w)\equiv 0$. Furthermore, assume that there exists a homeomorphism $\hfrak$ defined near the origin, with $\hfrak(0,0)=(0,0)$, such that 
 	$$\hfrak \circ P_1 = P_2 \circ \hfrak.$$
 	Let $b_i, \alpha_i, \beta_i$ (with $1 \leq i \leq 2$) be as in \eqref{eq:c}, and assume that 
 	$b_i>\frac{1}{4}$ and $\beta_i \in \R$. If both pairs $(\alpha_i, \beta_i)$ admit an $(\alpha_i, \beta_i)$-admissible sequence with a convergent phase sequence then $(\alpha_1, \beta_1)=(\alpha_2, \beta_2)$, and, in particular, $b_1=b_2$.

 \end{coro}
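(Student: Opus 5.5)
The plan is to mimic the skew-product argument of \cite{ABT}: encode $(\alpha,\beta)$ as the set of times at which points in a suitable open set return close to a fixed compact region, and transport this set of times through the conjugacy $\hfrak$. We have $g_i\equiv 0$, hence $\theta\equiv 0$. The maps $p_{i,0}$ are in the normal form $z-z^2+a_{3,0}z^3+\dots$, so the first coordinate iterates essentially like $p_{i,0}$ on $\bcal_{p_{i,0}}\times\C$ near the origin. Fix an $(\alpha_i,\beta_i)$-admissible sequence $(n_k)$ with phases $\sigma_k\to\sigma$. Applying Theorem \ref{th:maintech} successively along $n_{k+1}=n_k+M_{n_k}+\text{(bounded correction)}$, with $h$ absorbing the error coming from the first coordinate being $p_0^{n_k}(z')$ up to $O(1/n_k^2)$, I would show that
$$P_i^{n_{k+1}-n_k}\bigl(P_i^{n_k}(z,w)\bigr)\approx \bigl(p_{i,0}^{n_{k+1}}(z),\ \lcal_i(\alpha_i,\Gamma_i-\rho+\dots;z,w_k)\bigr),$$
where $w_k$ is the second coordinate at time $n_k$. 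Here I relate $n_{k+1}-n_k-M_{n_k}$ to $\sigma_k$ modulo the floor, so that the phase entering $\lcal$ converges to some $\sigma'$.

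The key step is to choose a point $(z_0,w_0)$ such that $w\mapsto\lcal_i(\alpha_i,\sigma';z_0,w)$ has an attracting fixed point (or a $\bcal_{q_0}$-recurrent orbit) $w^*$ in $\bcal_{q_{i,0}}$. This can be arranged by adjusting the free constant, since the phase can be shifted by choosing a different admissible sequence $n_k+m$ with convergent phase. With such a point, there is an open set $U_i$ such that $P_i^{n_k}(U_i)$ stays in a fixed compact set $K_i\subset\bcal_{p_0}\times\bcal_{q_0}$ rescaled near the origin, i.e. its second coordinate is bounded away from $0$ in the Fatou coordinate. Intrinsically, I would use the following characterization: $\alpha_i$ and $\beta_i$ are determined by the asymptotics of the return times $n_k$ of an open set to a region where the second coordinate of $P_i^{n_k}$ does \emph{not} tend to $0$ faster than the first. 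Since $(n_k)$ must satisfy $n_{k+1}=\alpha_in_k+\beta_i\ln n_k+O(1)$, we get $\alpha_i=\lim n_{k+1}/n_k$ and $\beta_i=\lim (n_{k+1}-\alpha_in_k)/\ln n_k$.

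To transfer these times through $\hfrak$, I would use the orbit of $U_1$ in both directions: the set $\hfrak(U_1)$ has orbit under $P_2$ equal to $\hfrak$ of the $P_1$-orbit. The times at which the $P_1$-orbit is contained in a compact set $K_1$ away from the parabolic curve $\{w=0\}$ correspond to the times the $P_2$-orbit lies in $\hfrak(K_1)$, a compact set avoiding $\hfrak(\{w$-axis dynamics$\})$. The difficulty is that $\hfrak$ is only a homeomorphism, so the notion of ``second coordinate comparable to the first'' is not invariant. I would replace it by a topological property: such points are not in the closure of the local stable set tangent to the characteristic direction $[1:0]$ of $p_0$, a set which is topologically defined (orbits converging to $0$ whose $P$-orbit stays in the petal region). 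Concretely, I would show that for $P_2$ the only times at which orbits of a small open set can visit a fixed compact set disjoint from $\{0\}$ and from the invariant set $\{z=0\}$ (which is $P$-invariant, since $p(0,w)=0$, and is sent by $\hfrak$ to the corresponding invariant set, up to identification via stable-set arguments) are $(\alpha_2,\beta_2)$-admissible up to $O(1)$. Comparing the two sequences then gives $n_{k+1}/n_k\to\alpha_1$ and $\to\alpha_2$, so $\alpha_1=\alpha_2$. Then $(n_{k+1}-\alpha n_k)/\ln n_k$ tends to both $\beta_1$ and $\beta_2$, so $\beta_1=\beta_2$. Finally $b$ is recovered from $\alpha=e^{\pi/c}$.

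The main obstacle is exactly this last converse: showing that any visit times of a $P_2$-orbit to such a compact set must form an admissible sequence for $(\alpha_2,\beta_2)$. This requires a uniform version of Theorem \ref{th:maintech}, describing the non-tangential approach as spiralling with fixed asymptotic return ratios, and making the compact set's definition invariant under $\hfrak$. I would first try to use the existence of the spiral domain from Theorem \ref{th:parbas}(1), together with the invariant line $\{z=0\}$, as the topological landmarks.
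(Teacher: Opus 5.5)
Your overall plan (transport the return times of a Lavaurs-trapped $P_1$-orbit through $\hfrak$ and read $(\alpha,\beta)$ off their asymptotics) is the right one. However, the step that carries all the content is left open, as you acknowledge, and the landmark you propose for it would fail.

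The returns at times $n_k$ do not lie in a compact set away from $\{z=0\}$. Their first coordinate is $\approx p_0^{n_k}(z)\to 0$, and they converge to $(0,\tilde\zeta(z_0))$ with $\tilde\zeta(z_0)\in\bcal_{q_0}\setminus\{0\}$. In fact, since $g\equiv0$ one has $x_{m+1}=p_0(x_m)+O(x_m^3y_m)$, so the first coordinate of every orbit staying near the origin tends to $0$. A compact set disjoint from $\{z=0\}$ is therefore visited only finitely often and carries no information. (Also, $\{w=0\}$ is not invariant, and $[1:0]$ is not a characteristic direction of the $2$-jet.)

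The invariant you actually need is just convergence of a single orbit: $P_1^{N_k}(\xi)\to(0,\tilde\zeta)$ with $N_k=n_k-n_j$, where the whole forward orbit stays in the domain of $\hfrak$ by Proposition \ref{prop:main}. Then $P_2^{N_k}(\hfrak(\xi))\to\hfrak(0,\tilde\zeta)$. This limit lies on $\{x=0\}$ by the same $x_m\to 0$ argument applied to $P_2$, so no claim that $\hfrak$ preserves the line is needed. It equals $(0,w_3)$ with $w_3\in\bcal_{q_2}\setminus\{0\}$ by Leau--Fatou on the invariant line, together with injectivity of $\hfrak$ and $\hfrak(0)=0$.

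You also do not need the full converse that every sequence of visit times of a $P_2$-orbit is $(\alpha_2,\beta_2)$-admissible; a one-sided contradiction plus symmetry suffices. Run the $P_2$-eggbeater estimates from $(x_{N_k},y_{N_k})$. This is legitimate because $y_{N_k}$ stays in a compact subset of $\bcal_{q_2}$ and $\phi^\iota_{p_2}(x_{N_k})=N_k+O(\log N_k)$.

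\textbf{The $\alpha$ step.} If $\alpha_1<\alpha_2$, then $\lfloor N_k^\nu\rfloor<N_{k+1}-N_k<(\alpha_2-1-\eta)N_k$. So at time $N_{k+1}$ the orbit is still inside the eggbeater, and $y_{N_{k+1}}=O(N_k^{-\nu})\to0\neq w_3$.

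\textbf{The $\beta$ step.} First upgrade to a bounded phase $\phi^\iota_{p_2}(x_{N_k})=N_k+C+o(1)$: the increments are $O(\log N_k/N_k)$, which are summable because $N_k$ grows geometrically. Then, if $\beta_1<\beta_2$, the time $N_{k+1}$ falls $(\beta_2-\beta_1)\log N_k+O(1)$ steps before the natural exit time $M^{(2)}_{N_k}$. Hence $\phi^o_{q_2}(y_{N_{k+1}})\approx-(\beta_2-\beta_1)\log N_k$, and again $y_{N_{k+1}}\to0$.

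The reverse inequalities follow by exchanging $P_1$ and $P_2$. This is exactly where the hypothesis that both pairs admit admissible sequences with convergent phase is used.

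Finally, take the attracting fixed point of the Lavaurs map from \cite[Lemma 6.1]{ABT}, which provides a super-attracting fixed point for the relevant phase $\Gamma+\sigma$. Shifting $n_k\mapsto n_k+m$ only moves the phase by the discrete amounts $m(1-\alpha)$ and cannot by itself produce one.
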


Another consequence of Theorem \ref{th:maintech} is the existence of Fatou components admitting limit maps of different ranks.

\begin{coro}\label{prop:wd} Let $P$ be a map of the form \eqref{map1} and assume that $g(w)\equiv 0$. Furthermore, let $b, \alpha_0, \beta_0$ be as in \eqref{eq:c}, and assume that $b>\frac{1}{4}$ and $\beta_0\in \R$. If there exists an $(\alpha_0,\beta_0)$-admissible sequence with a convergent phase sequence, then $P$ has a Fatou component with both rank one and rank zero limit maps. 
\end{coro}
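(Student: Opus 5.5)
Following the skew-product case in \cite{ABT}, I will construct a Fatou component on which the iterates along a suitable subsequence converge to a rank-one map, while along a different subsequence they converge to a constant. Since $g\equiv 0$, the function $\theta$ vanishes and the generalized Lavaurs map reduces to
$$\mathcal{L}(\alpha_0,\sigma;z,w)=\psi^o_{q_0}\bigl(\alpha_0\phi^\iota_{q_0}(w)+(1-\alpha_0)\phi^\iota_{p_0}(z)+\sigma\bigr).$$
Fix an $(\alpha_0,\beta_0)$-admissible sequence $(n_k)$ whose phase sequence $\sigma_k$ converges to some $\sigma_\infty$. Writing $n_{k+1}=n_k+M_{n_k}+r_k$, the bounded integer corrections $r_k$ and the fractional parts $\rho_{n_k}$ are absorbed into the phase. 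Applying Theorem \ref{th:maintech} with $n=n_k$ then gives
$$P^{n_{k+1}-n_k}\bigl(p_0^{n_k}(z)+h_k/n_k^2+o(n_k^{-2}),w\bigr)=\bigl(p_0^{n_{k+1}}(z),\mathcal{L}_k(z,w)\bigr)+o(1),$$
where $\mathcal{L}_k$ is the Lavaurs map with phase $\Gamma-\sigma_k'+(\alpha_0-1)h_k$ and $\sigma_k'\to\sigma_\infty'$. In the limit this is a single map $\mathcal{L}_\infty(z,w)=\psi^o_{q_0}(\alpha_0\phi^\iota_{q_0}(w)+(1-\alpha_0)\phi^\iota_{p_0}(z)+\sigma)$.

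\emph{Choosing the phase.} I will use the freedom in $\sigma$, which can be shifted by modifying $(n_k)$ by a fixed integer translation, to arrange the following. For some $z_0\in\mathcal{B}_{p_0}$, the one-variable map $w\mapsto\mathcal{L}_\infty(z,w)$ should have an attracting fixed point $w_0(z)$ in $\mathcal{B}_{q_0}$ for $z$ near $z_0$. For instance, it suffices to find $w_0$ where the derivative $\alpha_0(\psi^o)'\cdot(\phi^\iota)'$ is small, which happens near a critical point of $\psi^o_{q_0}$. I then take $\sigma$ so that $w_0$ is fixed, and use the implicit function theorem to follow $w_0(z)$ as $z$ varies. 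Since the phase of the renormalization moves with $n$ in the $z$-direction only through $p_0^{n_k}(z)$, I want the next step to start from a point of the form $p_0^{n_{k+1}}(z)+h/n_{k+1}^2$. The correction $h$ comes from the $o(1)$ error in the $z$-coordinate. Here the condition $p(0,w)=0$, $\partial_zp(0,w)=1$ matters: the first coordinate of $P$ differs from $p_0$ only by $O(z^3w)$ terms, since $g\equiv0$. A telescoping estimate, using $z$-coordinates of size $1/n$ along the orbit, then bounds the $z$-drift by $O(1/n^2)$ with a computable limit $h$.

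\emph{Building the component.} Take a small ball $U$ around $(p_0^{n_0}(z_0),w_0)$. By induction, the theorem applied at each stage together with the attraction of $w_0(z)$ shows that $P^{n_k-n_0}(U)$ stays in a neighbourhood of $\{p_0^{n_k}(z)+O(n_k^{-2})\}\times D(w_0,\epsilon)$, and that the $w$-coordinates converge to $w_0(z)$. The errors $o(1)$ must be summable or at least dominated by the contraction, and this is the main obstacle. Uniformity on compacts in Theorem \ref{th:maintech} gives $o(1)$ small for $k$ large, and contraction by a factor less than $1$ keeps the errors from accumulating. So $U$ lies in the Fatou set, in a component $\Omega$.

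\emph{Identifying the limits.} Along $n_k-n_0$, the limit of $P^{n_k-n_0}$ on $U$ is $(0,w_0(\zeta(z)))$, where $\zeta$ records the Fatou coordinate in $z$. Its rank is one when $w_0$ varies nontrivially with $z$, which holds because $\partial_z\mathcal{L}_\infty\ne0$ given $(1-\alpha_0)\ne0$. Along $m_k=n_k-n_0+j_k$, with $j_k\to\infty$ and $j_k=o(n_k)$, the second coordinate runs toward $0$ under near-$q_0$ dynamics inside $\mathcal{B}_{q_0}$, while $z\to0$. The limit is then the constant map to the origin, which has rank zero.
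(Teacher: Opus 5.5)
\textbf{There is a genuine gap.} Your overall plan is the paper's: trap a neighbourhood of an attracting fixed point of the limiting Lavaurs map along $n_k$, as in Proposition \ref{prop:main}, then get the constant limit along $n_k+\lfloor n_k^\nu\rfloor$ from Lemma \ref{lem:init}. The problem is that the step you call the main obstacle is resolved with the wrong mechanism.

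The inductive scheme carries two different errors. The first is the $o(1)$ error in the $w$-coordinate, and the contraction of $w\mapsto\mathcal{L}_\infty(z,w)$ does absorb it. The second is the first-coordinate correction $h$. It enters the \emph{phase} of the next Lavaurs map as $(\alpha_0-1)h$. It also accumulates additively from one passage to the next, because the $z$-Fatou coordinate is only translated. Nothing contracts this second error. If the accumulated phase moves by more than a small amount, $\D(\zeta(z),r)$ is no longer mapped into itself and the trapping fails; this is exactly the obstruction in Remark \ref{rem:g-nonzero-obstruction}.

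Your bound ``$z$-drift $O(1/n^2)$'' only gives a bounded shift of $\phi^\iota_{p_0}$ per passage, which does not rule out this drift. Also, the $o(1)$ first-coordinate error in Theorem \ref{th:maintech} is far too coarse to produce $h$, since $p_0^{n}(z)\sim 1/n$. The bookkeeping has to be done in Fatou coordinates along the proof, as in \eqref{z-estimate}.

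What $g\equiv0$ actually buys is that the per-passage phase shift is $o(1)$. The increment is $\phi^\iota_{p_0}(\eps_{j+1})=\phi^\iota_{p_0}(\eps_j)+1+O(\eps_jw_j)$. Summing $|\eps_jw_j|$ over the entry, eggbeater and exit stretches gives $O(\ln n_k/n_k)$, and in any case $O(n_k^{-\nu})$. Because $n_k\asymp\alpha_0^k$ (\cite[Lemma 10.1]{ABT}), these corrections are summable. So the accumulated phase stays within $O(n_j^{-\nu})$ of its initial value and converges. Only then do the renormalized return maps converge to a single Lavaurs map, whose attracting fixed point $\tilde\zeta(z)$ is close to $\zeta(z)$.

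\textbf{Choice of the phase.} The phase $\sigma$ is not yours to choose. It is the limit of the phase sequence of the given admissible sequence, so it is real. An integer translation $n_k\mapsto n_k+m$ only shifts it by $m(1-\alpha_0)$, which amounts to replacing $z$ by $p_0^m(z)$. The freedom lies in $z_0$ instead:
\begin{itemize}
\item keep $\sigma$ fixed;
\item pick a critical point $c^*$ of $\psi^o_{q_0}$ with $w_0=\psi^o_{q_0}(c^*)\in\mathcal{B}_{q_0}$;
\item solve $(1-\alpha_0)\phi^\iota_{p_0}(z_0)=c^*-\alpha_0\phi^\iota_{q_0}(w_0)-\Gamma-\sigma$ using $\phi^\iota_{p_0}(\mathcal{B}_{p_0})=\C$.
\end{itemize}
This is the content of \cite[Lemma 6.1]{ABT}.

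\textbf{Smaller points.}
\begin{itemize}
\item At a super-attracting fixed point $\partial_z\mathcal{L}_\infty$ vanishes. Non-constancy of the fixed-point curve should therefore be argued from $\psi^o_{q_0}$ not being locally constant.
\item For the rank-zero limit, take $j_k=\lfloor n_k^\nu\rfloor$, which is the range where Lemma \ref{lem:init} applies.
\item Both limit maps must be transferred from $U$ to all of $\Omega$ by normality and the identity principle.
\end{itemize}
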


Finally, in the last section we give an example of a polynomial self-map of $\C^3$ with an elliptic fixed point at the origin, namely 
$$
\tilde{F}(z,w,t):=
\left(\lambda (z+z^2t),\lambda(w+w^2t+\left(1+\frac{2\pi i}{\ln 2}\right)zwt),\lambda^{-1}t\right).
$$
where $|\lambda|=1$ is not a root of unity. This map has a wandering domain with a non-contractible limit set. 

\medskip
Although the organization of the paper and many of the underlying ideas closely parallel those of \cite{ABT}, the non-skew-product case is not a formal consequence of the skew-product one. The coupling between the two coordinates creates additional error terms and leads to several technical difficulties. These difficulties become especially relevant when one tries to transfer the renormalization results to statements about the actual dynamics of iterates of $P$.

\section{Parabolic domains}\label{sec:parbas}

Let $P$ be a holomorphic germ fixing the origin that is tangent to the identity of order $k \ge 2$, i.e. a map with a homogeneous expansion $P = \mathrm{Id} + P_k + P_{k+1} + \ldots$ where $P_k\not\equiv 0$. We say that $v \in \mathbb{C}^2$ is a \emph{characteristic direction} for $P$ if there exists a $\lambda \in \mathbb{C}$ so that
$P_k(v) = \lambda v$. 
%\begin{equation*}
%P_k(v) = \lambda v,
%\end{equation*}
If $\lambda \neq 0$, then $v$ is said to be \emph{non-degenerate} otherwise it is degenerate. The \emph{director} of a characteristic direction $v$ is an eigenvalue of a linear operator 
$$
d(P_k)_{[v]}-\id:T_{[v]}\mathbb{P}^1\rightarrow T_{[v]}\mathbb{P}^1.
$$

A \emph{parabolic curve} for $P$ is an injective holomorphic map $\varphi: \Delta\rightarrow\mathbb{C}^2$, satisfying the following properties:
\begin{enumerate}
\item $\Delta$ is a simply connected domain in $\C$ with $0\in\partial\Delta$
	\item $\varphi$ is continuous at the origin and $\varphi(0)=(0,0)$,
	\item $\varphi(\Delta)$ is invariant under $P$ and $P^n|_{\varphi(\Delta)}\rightarrow (0,0)$ uniformly on compact subsets.
\end{enumerate}
We say that a parabolic curve is tangent to $[v]\in \mathbb{P}^{1}$ if $[\varphi(\xi)]\rightarrow[v]$ as $\xi\rightarrow 0$ in $\Delta$. This implies that for any given point $z$ in the parabolic curve the orbit $(P^n(z))$ converges to the origin \emph{tangentially} to $v$, i.e. $[P^n(z)] \rightarrow [v]$ in $\mathbb{P}^1$. We now recall the following classical result due to Hakim \cite{hakim1994attracting, hakim1998analytic}:

\begin{thm}\label{thm:Hakim} Let $P: \mathbb{C}^2 \rightarrow \mathbb{C}^2$ be a holomorphic germ fixing the origin that is tangent to the identity of order $k \ge 2$. Then for any non-degenerate characteristic direction $v$ there exist (at least) $k-1$ parabolic curves for $P$ tangent to $[v]$. Moreover if the real part of the director of a non-degenerate characteristic direction $v$ is strictly positive, then there exists an invariant parabolic domain in which every point is attracted to the origin along a trajectory tangent to $v$.
\end{thm}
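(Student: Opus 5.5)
The plan is to follow Hakim's original strategy: blow up along the direction $[v]$, reduce to a one-dimensional parabolic map in the base coupled with a nearly linear fibre map, and read off both parabolic curves and the basin from the resulting normal form.

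\emph{Step 1: blow-up and normalization.} After a linear change of coordinates we may take $v=(1,0)$. Write $P(x,y)=(x+p_k(x,y)+\dots,\ y+q_k(x,y)+\dots)$. Non-degeneracy means $p_k(1,0)=\lambda\neq 0$ and $q_k(1,0)=0$. Rescaling $x$, we arrange $p_k(1,0)=-\tfrac{1}{k-1}$.

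In the blow-up chart $y=xu$, a direct computation gives
$$x_1=x-\tfrac{1}{k-1}x^k+O(ux^k,x^{k+1}),\qquad u_1=u-A\,u\,x^{k-1}+O(u^2x^{k-1},x^k),$$
where, up to the normalizing factor, $A$ is the director of $[v]$. By finitely many polynomial changes of the form $u\mapsto u+\sum_j c_j(x)x^j$ we push the $u$-independent error in $u_1$ to order $O(x^N)$ for any prescribed $N$. We similarly normalize the $x$-equation, so that $x_1=x-\tfrac{1}{k-1}x^k+O(ux^k,x^{2k-1}\log)$ in suitable coordinates.

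\emph{Step 2: parabolic curves.} The base map $x\mapsto x-\tfrac{1}{k-1}x^k$ has $k-1$ attracting petals $\Pi_j$, each invariant and with $0\in\partial\Pi_j$. On such a petal we have $|x_n|\sim n^{-1/(k-1)}$. For each petal we look for an invariant graph $u=\psi(x)$ with $|\psi(x)|\le |x|^{N'}$. Invariance of the graph is the functional equation
$$\psi(x_1(x,\psi(x)))=u_1(x,\psi(x)).$$
We rewrite it as a fixed-point problem: either solve backward along orbits, $\psi=T\psi$, or, more robustly, define $u_0$ along each orbit as the series obtained by summing the inhomogeneous $O(x^N)$ terms. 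The key estimate is that the linear factor satisfies
$$\prod_{m\ge n}\bigl(1-Ax_m^{k-1}\bigr)^{\pm1}\sim n^{\pm \re A/(k-1)},$$
i.e. it grows at most polynomially. Choosing $N$ large enough to beat this growth, $T$ becomes a contraction on the Banach space of holomorphic functions on $\Pi_j$ with the weighted norm $\sup|\psi|/|x|^{N'}$. The fixed point gives an injective map $\varphi(\xi)=(\xi,\xi\psi(\xi))$ defined on $\Delta=\Pi_j$. This curve is tangent to $[v]$, invariant, and has orbits converging to $0$.

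\emph{Step 3: the basin when $\re$(director)$>0$.} Now $\re A>0$, and we consider the open set
$$D=\{(x,u): x\in\Pi_j',\ |u|<\varepsilon\}$$
over a slightly smaller sector $\Pi_j'$. Using $|1-Ax^{k-1}|\le 1-\tfrac{\re A}{2}|x|^{k-1}$ on the sector, together with the $O(x^N)$ bound on the error, we check that $D$ is forward invariant. Iterating this bound gives
$$|u_n|\lesssim |u_0|\,n^{-\re A/(2(k-1))}+o(1)\to 0,$$
so $[P^n]\to[v]$ and $P^n\to 0$ uniformly on compacts of the image of $D$ in $\C^2$. The union $\bigcup_n P^{-n}(D)$, taken in its connected component, is then a maximal invariant parabolic domain tangent to $v$.

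\emph{Main obstacle.} I expect the hardest step to be the contraction in Step 2 when $\re A\le 0$. There the fibre direction is neutral or repelling, so one must solve backwards and control the polynomial growth $n^{-\re A/(k-1)}$ of the linear cocycle. This is what forces the high-order normalization in Step 1 (choosing $N>\,\re(-A)/(k-1)+1$) and a careful choice of weighted norm. I would first try Hakim's discrete approach: work with sequences $(u_n)$ along base orbits, and prove the estimates on them before passing to a holomorphic graph.
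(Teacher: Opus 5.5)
The paper does not prove this statement; it quotes it from Hakim. So your outline can only be compared with Hakim's argument. Its architecture matches yours: blow up along $[v]$, put the map in the form $x_1=x-\frac{1}{k-1}x^k+\dots$, $u_1=(1-Ax^{k-1})u+\dots$, build invariant graphs over the $k-1$ petals, and, when $\re A>0$, show $\{x\in\Pi_j',\ |u|<\varepsilon\}$ is forward invariant over a thin sector. Step 3 is essentially fine, and it needs no normalization beyond the $O(x^k)$ error you get for free.

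\textbf{Gap in the normalization (Steps 1--2).} With constant $c_j$, a change $u\mapsto u+cx^j$ alters the $u$-free term of $u_1$ at order $x^{j+k-1}$ by $c\bigl(\frac{j}{k-1}-A\bigr)x^{j+k-1}$. It therefore cannot remove that term when $A=\frac{j}{k-1}$ with $j\in\N$, and your claim ``for any prescribed $N$'' fails there. This resonance sits exactly at the order you need:
\begin{itemize}
\item for $T$ to be non-expanding in the norm $\sup|\psi|/|x|^{N'}$ you need $\re\bigl(\frac{N'}{k-1}-A\bigr)>0$, i.e.\ $N'>j$;
\item for $T$ to preserve a ball, the $u$-free term must then vanish to order $>j+k-1$;
\item but the unremovable term is precisely $x^{j+k-1}$.
\end{itemize}
This is not cosmetic. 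If the resonant coefficient $b$ is nonzero, an invariant graph with $\psi=O(x^{N'})$, $N'>j$, would force $u_n=O(n^{-N'/(k-1)})$ along its orbits. The recursion instead gives $u_n=n^{-A}\bigl(\kappa b\log n+O(1)\bigr)$ with $\kappa\neq0$. So your fixed-point space contains no solution at all. The standard remedy is to allow corrections $u\mapsto u+P_j(\log x)\,x^j$ with $P_j$ polynomial; these are holomorphic on a sector, which is all a parabolic curve requires.

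\textbf{Your ``main obstacle'' is inverted.} The natural solution is the Lyapunov--Perron sum $u_0=-\sum_{m\ge0}\Pi_{0,m+1}^{-1}\,b(x_m)$, where $\Pi_{0,m}=\prod_{i<m}(1-Ax_i^{k-1})$ and $|\Pi_{0,m}^{-1}|\approx m^{\re A}$. The exponent is $\re A$, not $\re A/(k-1)$, because $x_m^{k-1}\sim 1/m$ in your normalization. For $\re A\le 0$ this sum already converges with the $O(x^k)$ error, so the $u$-equation needs no further normalization. It is for $\re A>0$ that polynomial growth must be beaten, forcing vanishing order $>(k-1)(\re A+1)$. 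That is exactly the range where the resonances above occur.

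\textbf{$T$ is not a contraction as claimed.} In the weighted sup norm, its pointwise Lipschitz factor is $|1-Ax^{k-1}|^{-1}|x_1/x|^{N'}+O(|x|^k)=1-\re\bigl((\tfrac{N'}{k-1}-A)x^{k-1}\bigr)+O(|x|^k)$, which tends to $1$ as $x\to0$. So Banach's theorem does not apply as stated. You need either the sequence formulation you mention at the end, or the fact that the product of these factors along an orbit tends to $0$. The latter gives locally uniform convergence of $T^n\psi_0$ to a fixed point.
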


From now on, let $P$ be a map whose 2-jet at the origin is equal to \eqref{2jet} and observe that its characteristic directions are determined by the equations
$$
\left\{
\begin{array}{ll}
-z^2&=\lambda z \\
w^2 + bz^2&=\lambda w
\end{array}
\right.
$$

Then, aside from the trivial parabolic curve $z=0$ with non-degenerate characteristic direction $(0, 1)$, there are two parabolic curves $z \mapsto (z,\zeta^\pm(z))$
which are tangent to the non-degenerate characteristic directions $(1, c^\pm)$, where $c^\pm$ 
are the roots of
\begin{equation}\label{equ}
u^2 +u+b=0.
\end{equation} 

The new contribution of Theorem \ref{th:parbas} is contained in part (1), since part (2) follows from results of Hakim and Vivas, see \cite[Proposition 3.2]{ABT}. Therefore it suffices to prove the following proposition.

\begin{prop}\label{prop:b>1/4}
If $b>\frac{1}{4}$, then each of the two non-vertical parabolic curves is contained in spiral domains.
\end{prop}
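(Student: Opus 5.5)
The plan is to pass to the blow-up coordinate $u=w/z$. I would show that, for $b>\frac14$, the projective dynamics near each non-vertical characteristic direction is a small perturbation of an elliptic (rotational) Möbius flow in the slow time $\ln n$. That gives an invariant open set in which $z_n\to0$ while $u_n$ keeps circling around $c^\pm$ without converging.

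\emph{Coordinates.} Write $w=zu$ and restrict to $\{0<|z|<r,\ u\in K\}$, where $K$ is a compact subset of $\C\setminus\R$. There $\|(z,w)\|=O(|z|)$, so only the $2$-jet \eqref{2jet} matters at leading order. A direct computation gives
$$
z_1=z-z^2+O(z^3),\qquad u_1=\frac{u+zu^2+bz}{1-z}+O(z^2)=u+z\,(u^2+u+b)+O(z^2),
$$
uniformly for $u\in K$. Now take $z$ in a small attracting petal of $z\mapsto z-z^2$, for instance a disc tangent to the imaginary axis at $0$ and contained in $\{\re(1/z)>R\}$. Then the first equation gives $z_n=\frac1n+O(\frac{\ln n}{n^2})$ in the usual Fatou-coordinate sense, as long as $u_n$ stays in $K$. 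Consequently the second equation reads
$$
u_{n+1}=u_n+\frac{1}{n}F(u_n)+O\!\left(\frac{\ln n}{n^2}\right),\qquad F(u)=u^2+u+b.
$$
This is an Euler scheme, with step $1/n$ in the time $t=\ln n$, for the ODE $\dot u=F(u)$, plus summable errors.

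\emph{The model flow.} Since $b>\frac14$, the roots of \eqref{equ} are $c^\pm=-\frac12\pm ic$ with $c>0$, and $F'(c^\pm)=\pm 2ic$ is purely imaginary. This is why the director has zero real part and Theorem \ref{thm:Hakim} gives no domain. The flow of $\dot u=F(u)$ is a one-parameter group of real Möbius maps with fixed points $c^\pm$, so it is elliptic. It preserves the upper and lower half planes, acts on each as a hyperbolic rotation about $c^\pm$ with period $\pi/c$ (matching $\alpha_0=e^{\pi/c}$), and preserves the hyperbolic distance $d(u)=d_{\mathbb H}(u,c^+)$. I will use $d$ as a Lyapunov function for the discrete system. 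The key estimate is
$$
|d(u_{n+1})-d(u_n)|\le C\left(\frac{1}{n^2}+\frac{\ln n}{n^2}\right)\quad\text{whenever } u_n\in K.
$$
The main part of this comes from the Euler step deviating from the time-$1/n$ flow by $O(1/n^2)$, plus the $O(\ln n/n^2)$ errors. Because $\sum \ln n/n^2$ can be made smaller than any $\eps$ by starting at large $n$ (that is, small $|z|$), one can choose $K=\{\delta\le d\le D\}$ with $0<\delta<D$. Then the open set $U=\{(z,zu): z \text{ in the petal},\ \re(1/z)>R,\ \delta'<d(u)<D'\}$, with $\delta<\delta'<D'<D$ and $R$ large, is forward invariant, and orbits in $U$ satisfy $z_n\to0$, $w_n=z_nu_n\to0$ locally uniformly. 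Adding a neighbourhood of $\{d<\delta'\}$ also contains the parabolic curve $z\mapsto(z,\zeta^+(z))$, which corresponds to $u\to c^+$; the curve tangent to $c^-$ is handled symmetrically in the lower half plane. The parabolic domain $\Omega$ is then the connected component, containing $U$, of the set of points whose orbits converge to $0$ locally uniformly.

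\emph{Non-tangency.} For $u_0\in U$ with $d(u_0)>0$, the comparison with the flow gives that $u_n$ stays within $o(1)$ of the flow orbit $\Phi_{\ln n+O(1)}(u_0')$, which moves periodically around the circle $\{d=d(u_0')\}$ with $d(u_0')>0$. So $[P^n(x)]=[1:u_n]$ does not converge, and such orbits are not tangent to any direction. To pass from $U$ to all of $\Omega$, I would argue that every $y\in\Omega$ is eventually mapped into $U$. Let $x\in U$, join it to $y$ by a compact path, and use local uniform convergence and the Kobayashi/contraction argument on the invariant domain. This gives that $P^n(y)$ and $P^n(x)$ are at distance $o(|z_n|)$, so $u_n(y)-u_n(x)\to0$. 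Together with invariance of $U$ under perturbations, this places $y$'s orbit in the oscillating regime.

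I expect this last step to be the main obstacle: showing that the entire parabolic domain, not just the constructed set $U$, consists of non-tangential orbits. If the relative-distance estimate is delicate, I would fall back on defining the spiral domain as the component of the interior of $\{x : P^n(x)\to0 \text{ non-tangentially}\}$ containing $U$, and check maximality and invariance directly.
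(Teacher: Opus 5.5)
Your core mechanism is sound and differs from the paper's route. The paper straightens the curve to $\{w=0\}$, blows up, and works near $u=0$ in the coordinate $U=-1/u$. It removes the rotation with $Y_n=e^{\sum_{k\le n}(ix/k+\Theta_k)}U_n$ and gets $Y_n=Y_0+O(1)$ from the boundedness of $\sum_{k\le n}k^{-1+ix}$ plus summation by parts. You instead read the blown-up system as an Euler scheme for the elliptic flow of $\dot u=u^2+u+b$. You then use $d_{\mathbb{H}}(\cdot,c^+)$, equivalently $|\xi|$ with $\xi=(u-c^+)/(u-c^-)$, as an exact invariant. This avoids straightening the curve and the oscillatory-sum lemma, and it controls a whole hyperbolic disc of directions around $c^\pm$; the paper's bounded sum is the discrete shadow of this conservation law. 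There are, however, two concrete problems.

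\textbf{Uniformity of the drift.} The Euler step is the complex number $z_n\approx(1/z_0+n)^{-1}$, not $1/n$. On your petal $\{\re(1/z)>R\}$, points near the boundary have $|\im(1/z_0)|\gg\re(1/z_0)$, however small $|z_0|$ is. For these points you must compare with the complex-time flow, which does not preserve $d$. One has $|\xi_{n+1}|=e^{-2c\,\im z_n}|\xi_n|+O(|z_n|^2)$ and $\sum_n\im z_n\to\arg z_0$ up to small errors. So $\log|\xi|$ drifts by about $2c|\arg z_0|$, which can approach $c\pi$.
\begin{itemize}
\item ``Small $|z|$'' therefore does not make the total drift small, and the annulus $\{\delta'<d<D'\}$ is not forward invariant.
\item Orbits near its outer edge can be pushed towards $\R\cup\{\infty\}$, where your expansion $u_1=u+zF(u)+O(z^2)$ is no longer uniform.
\item The repair is to restrict $z$ to a thin sector $\{|\arg z|<\eta\}$, where the drift is $O(\eta)$. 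Alternatively, work on compact pieces of the petal with a $u$-range shrinking with the piece; this is what the paper's $\D(r,r\frac{j}{j+1})$, $r_j$ and $\tilde C_j$ do.
\item Likewise, $d(u_0)>0$ does not give $d(u_0')>0$. Points of the parabolic curve have $u_0=\zeta^+(z_0)/z_0=c^++O(z_0)$, in general $\neq c^+$, yet $u_n\to c^+$. You need $d(u_0)$ to exceed the total drift.
\end{itemize}

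\textbf{Your ``main obstacle'' misreads the definition and aims at a false statement.} A parabolic domain is tangent to $v$ if \emph{every} point is attracted tangent to $v$, and a spiral domain is tangent to no direction. So one orbit in $\Omega$ along which $[1:u_n]$ fails to converge suffices. Your repaired annulus analysis gives exactly that, as the paper concludes from $u_n\sim Ce^{ix\log n}$ and the non-constancy of $\eta$.

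Conversely, ``every orbit in $\Omega$ is non-tangential'' cannot hold. The proposition puts the parabolic curve in $\Omega$, and its orbits converge tangent to $(1,c^\pm)$.

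The claim $u_n(y)-u_n(x)\to0$ is also false. The model flow is an isometry, so $d_{\mathbb{H}}(u_n(x),u_n(y))$ tends to a generically nonzero limit. This matches the paper's limit maps $(0,\eta(z,u))$ with $\partial_u\eta\not\equiv0$. No contraction argument can give $o(|z_n|)$.

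Your fallback, the component of the interior of the non-tangential set, would exclude the parabolic curve and so would not prove the statement either. Drop this step. It remains only to note that your set is connected, contains the curve, and meets its image, so it lies in one invariant parabolic domain.
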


\begin{proof} 
 Let $z\mapsto (z,\zeta^\pm(z))$ be one of the parabolic curves tangent to a non-degenerate characteristic direction $(1,c^\pm)$. Since it is invariant under $P$, it has to satisfy the equality $q_z(\zeta^\pm(z))=\zeta^\pm(p(z))$. A direct computation gives $\zeta^\pm(z):=c^{\pm}z+O(z^2)$. For $z$ close to the origin, we can define a change of coordinates $\psi^{\pm}(z)=(z,w+\zeta^{\pm}(z))$ which conjugates the map $P$ to a map of the form
\begin{equation}\label{conjug}
(z,w)\mapsto (z-z^2+O(z^3,z^2w, zw^2,w^3),w+w^2+2c^\pm zw +O(zw^2,z^2w,w^3)),
\end{equation}
where $c^\pm=-\frac{1}{2}\pm i\frac{x}{2}$ is the solution of the equation $u^2+u+b=0$ and $x>0$. Note that $(1,0)$ is now a non-degenerate characteristic direction of this map. For the rest of the proof, we focus on the case of $c^+$; in the case of $c^-$, the computations are identical with an appropriate change of sign.
\medskip

After the blow-up $w=uz$ of the map \eqref{conjug}, we obtain 
\begin{equation}\label{blowup}
\tilde{P}(z,u)=(z-z^2+O(z^3,z^3u),u(1+ ix z)+zu^2+ O(z^2u)).
\end{equation}
where the $O(z^2u)$ term is holomorphic on a neighborhood of the origin. Define $\mathbb{D}(r,r)=\{z\in\C: |z-r|<r\}$ and $D_r:=\{(z,u)\mid |u|<r, z\in \mathbb{D}(r,r)\}$ and assume that $r$ is sufficiently small so that $\tilde{p}_u(\mathbb{D}(r,r))\subset \mathbb{D}(r,r)$, where $\tilde{p}_u(z)=\tilde{p}(z,u)$.

\begin{lem}\label{lem:blowup}There exists a sequence of real numbers $0<r_j<r$ such that for any 
$(z_0,u_0)\in \mathcal{D}:=\bigcup_{j\geq 1} \{(z,u)\mid |u|<r_j, z\in \mathbb{D}(r,r\frac{j}{j+1})\}$ we have 
$\tilde{P}^n(z_0,u_0)\in D_r$ for all $n\geq 0$. Moreover, the sequence $\tilde{P}^n(z_0,u_0)$ is bounded away from the origin.
\end{lem}

\begin{proof}[Proof of Lemma \ref{lem:blowup}]
 First, observe that for sufficiently small $r>0$, there exists a holomorphic function $h(z)$ such that 
$$\tilde{P}(z,u)=(\tilde{p}(z,u),\tilde q(z,u))=(z-z^2+O(z^3,z^3u),ue^{ix z+z^2h(z)}+zu^2+ O(z^2u^2)).
$$

We need to prove that for any given $j \in \N^*$ there exists $0<r_j<r$ such that for every $(z_0,u_0) \in K_j\times \D(0,r_j)$ we have $(z_{n},u_{n}):= \tilde{P}^n(z_0,u_0)\in D_r$ for all $n\geq 1$. Clearly this holds for $n=1$ and $r_j$ is small enough.

Let us write $(z_n,u_n)=\tilde{P}^n(z_0,u_0)$. We prove by induction that for all $r_j$ sufficiently small we have 
\begin{enumerate}
  \item $z_n=\frac{1}{n}+O\left(\frac{\ln n}{n^2} \right)$,
  \item $|u_n|<r$.
\end{enumerate}
for all $n\geq 1$, with uniform bounds on $K_j\times \D(0,r)$.

Note that when $n=1$, this clearly holds if $r_j$ is sufficiently small. Now assume that these items hold for all $k\leq n$. Observe that
\begin{align*}
 \phi_{p}(\tilde{p}(z,u))&=\phi_{p}(p(z)+O(z^3u))=\phi_{p}(p(z))+O(\phi_{p}'(z)z^3u)\\
 &=\phi_{p}(z)+1+O(zu)
\end{align*}
or in other words $\phi_{p}(z_{k+1})=\phi_{p}(z_k)+1+O(z_k u_k)$, for all $k\leq n$ with uniform bounds on $K_j\times \D(0,r)$. Therefore we have
$$\phi_{p}(z_{n+1})=\phi_{p}(z_k)+n+\sum_{k=0}^nO(z_k u_k)=\phi_{p}(z_0)+n+1+O(\ln{(n+1)})$$ 
and hence $z_{n+1}=\frac{1}{n+1}+O\left(\frac{\ln {(n+1)}}{(n+1)^2} \right)$ which proves the first item.

For the second assertion, we have
\begin{align*}
  u_{n+1}&=u_{n}e^{ix z_n+z_n^2h(z_n)}+z_nu_n^2+ O(z_n^2u_n^2)\\
    &= u_ne^{\frac{ix}{n}+\Theta_n}+\frac{u_n^2}{n}+ O\left(\frac{u_n^2\ln n}{n^2}\right)
\end{align*}
where $\Theta_n =O\left(\frac{\ln n}{n^2}\right)$ and $O\left(\frac{u^2\ln n}{n^2}\right)$ with uniform bounds on $K_j\times \D(0,r)$.

Now define $U_n:=-\frac{1}{u_n}$ and observe that

$$
U_{n+1}=U_ne^{-\frac{ix}{n}-\Theta_n}+\frac{1}{n}+ O\left(\frac{\ln n}{n^2},\frac{\ln n}{U_n n^2}\right).
  $$ 

By our assumption $|U_0|>\frac{1}{r_j}$ and moreover $|U_k|>\frac{1}{r}$ for all $k \leq n$. We claim that $|U_{n+1}|>\frac{1}{r}$. 
   \medskip

Observe that since $x$ is real, there exists $\tilde{C}_j>0$ such that 
$$
\tilde{C}_j^{-1}<\left|e^{\sum_{\ell=1}^{k} \frac{ix}{\ell}+\Theta_\ell}\right|<\tilde{C}_j
$$ 
on $K_j\times \D(0,r)$ for all $k \leq n$. After the non-autonomous change of coordinates 
 $$
 Y_n=e^{\sum_{k=1}^{n} \frac{ix}{k}+\Theta_k}U_n,
 $$
 we obtain 
 
 \begin{align*}
 Y_{n+1}&=Y_n+\frac{1}{n}e^{\sum_{k=1}^{n} \frac{ix}{k}+\Theta_k}+ O\left(\frac{\ln n}{n^2},\frac{\ln n}{Y_nn^2}\right)\\
 &=Y_n+\frac{1}{n}e^{ix\ln n + \mathfrak{h}_n(z_0,u_0)}+ O\left(\frac{\ln n}{n^2},\frac{\ln n}{Y_n n^2}\right)
 \end{align*}
 where $\mathfrak{h}_n(z,u):=ix\gamma+\sum_{k=1}^{n} \Theta_k$ is a holomorphic function. Here we have used the fact that $\sum_{k=1}^{n} \frac{1}{k}=\gamma+\ln n +O(\frac{1}{n})$. Since the bounds in $\Theta_k$ are uniform on $K_j\times \D(0,r)$, the sequence of functions $\mathfrak{h}_n$ converges uniformly on $K_j\times \D(0,r_j)$ to some holomorphic function $\mathfrak{h}$ if our two items hold for each $n$.

Since $x\neq0$ is real, by Abel's summation formula there exists a constant $C>0$ such that 
$$
\left|\sum_{k=1}^n\frac{1}{k}e^{ix\ln k}\right|=\left|\sum_{k=1}^n k^{-(1-ix)}\right|<C
$$
for all $n\geq 1$. Summation by parts gives us

\begin{align*}
\sum_{k=1}^\ell\frac{1}{k}e^{ix\ln k}e^{\mathfrak{h}_k}&=e^{\mathfrak{h}_\ell}\sum_{k=1}^\ell\frac{1}{k}e^{ix\ln k}-
\sum_{k=1}^{\ell-1}(e^{\mathfrak{h}_{k+1}}-e^{\mathfrak{h}_{k}})\sum_{j=1}^k\frac{1}{j}e^{ix\ln j}\\
&=O(1)+ \sum_{k=1}^{\ell-1}O(\frac{\ln k}{k^2})=O(1)
\end{align*}
 and therefore $Y_{\ell}=Y_0+O(1)$ for all $\ell\leq n+1$, where the constant in $O(1)$ depends only on $K_j\times \D(0,r)$. 
 It follows that there is $A_j>0$ such that for all $r_j$ small enough and all $(z_0,u_0)\in K_j\times \D(0,r_j)$ we have 
 $$\tilde{C}_j^{-1}|U_0|-A_j<|U_k|<\tilde{C}_j|U_0|+A_j$$
and in particular
$$|U_k|>\frac{1}{r}$$
for all $k\leq n+1$. Note that we may choose $A_j$ independently of $n$. We have shown that $|u_{n+1}|<r$ assuming that $r_j$ was sufficiently small, which completes the induction step. 

The fact that $r_j$ can be chosen independently of $n$ follows from the uniform estimates on $K_j\times \D(0,r)$. Finally, notice that the proof also shows that the sequence $(u_n)_{n \geq 0}$ is bounded away from the origin which concludes the proof of Lemma \ref{lem:blowup}.
\end{proof}

We now return to the proof of Proposition \ref{prop:b>1/4}. 
Let $\Omega:=\{(z,zu)\mid (z,u)\in \mathcal{D}\}$: it is a connected open set whose boundary contains the origin and such that $P(\Omega)\cap \Omega\neq\emptyset$.
From Lemma \ref{lem:blowup}, it immediately follows that the iterates $P_{|\Omega}^n$ converge to the origin locally uniformly on $\Omega$, and is therefore contained in some invariant parabolic domain. It remains to prove that orbits of points converge non-tangentially to the origin in that parabolic domain. Indeed, let $(z_0,w_0)\in \Omega$ and $(z_{n},w_{n})=P^{n}(z_0,w_0)$ and observe that, since $z_n\neq0$, for all $n \in \N$ we have $[z_n:w_n]=[1:\frac{w_n}{z_n }]=[1:u_n]$, where $u_n\sim Ce^{ix\log{n}}$ for some $C\neq 0$. The proof of Lemma \ref{lem:blowup} shows that every limit map of the iterates $(\tilde{P}^n)$ on $\mathcal{D}$ is of the form $(z,u)\mapsto (0, \eta(z,u))$, where $\eta$ is a non-constant holomorphic function and $\frac{\partial \eta}{\partial u}\not\equiv 0$. %Therefore, there is no vector $v\in\mathbb{C}^2$ such that the sequence $[P^n(z,w)]$ would converge to $[v]$ in $\mathbb{P}^1$ for all $(z,w)\in \Omega$.
Thus there is no direction $v\in\mathbb P^1$ such that all points in the parabolic domain converge to the origin tangent to $v$.
\end{proof}

\section{The error functions}\label{sec:erf}

In this section we introduce and study the functions $\tilde A(z,w)$, $ A(z,w)$ and $A_0(z)$, which are among the main objects in our analysis. These functions measure, in suitable local coordinates, how far the dynamics deviates from a translation. Although many of the arguments are similar to those in \cite{ABT}, we include them here because the fact that $P$ is not a skew product introduces additional terms that require separate estimates.

\medskip

Let $P$ be a map of the form \eqref{map1}. Hakim's work implies that, aside from the trivial parabolic curve contained in the invariant line $z=0$, there are two parabolic curves
which are tangent to the non-degenerate characteristic directions $(1, c^\pm)$ where $c^\pm:=-\frac{1}{2}\pm ic$. These parabolic curves can be written as holomorphic graphs $z\mapsto (z, \zeta^{\pm}(z))$ over a small petal $\D(r,r):=\{z\in\C: |z-r|<r\}$. Since the parabolic curves are invariant under $P$, the functions $\zeta^{\pm}$ satisfy the functional equations 
\begin{equation}\label{eq:curve}
q(z,\zeta^{\pm}(z))=\zeta^{\pm}(p(z,\zeta^{\pm}(z))).
\end{equation}
From these equations one easily computes the first few terms of the formal power series to which they are asymptotic:
\begin{equation}
	\zeta^{\pm}(z):=c^{\pm}z+\left(c^{\pm} \Theta+\frac{a_{3,0}+(b-1)b_{0,3}- a_{2,1} }{2}\right)z^2+O(z^3),
\end{equation}
where 
\begin{equation}\label{eq:deftheta}
	\Theta:=b_{0,3} +\frac{a_{3,0}-b_{0,3}+b_{3,0} +(b-1) a_{2,1} }{2b}.
\end{equation}

\begin{defi}
	Let
	$$
	\psi_z (w):=\frac{1}{2ic}\log \left(\frac{\zeta^+(z)-w}{w-\zeta^-(z)} \right) 
	$$
	where $\log$ is the principal branch of logarithm and let
	$$
	\psi_z^{\iota/o} (w):=\psi_z (w) \pm \frac{\pi}{2c}.
	$$
\end{defi}

With this choice of branch, $\psi_z$ is defined on $\C \backslash L_z$,
where $L_z$ is the real line through $\zeta^+(z)$ and $\zeta^-(z)$ minus the segment
$[\zeta^-(z), \zeta^+(z)]$. In particular, $\psi_z^\iota$ and $\psi_z^o$ are both
defined in a disk centered at $w=\frac{1}{2}(\zeta^+(z)+\zeta^-(z))$ whose radius is of order $z$.

\begin{defi}
	Let
	\begin{enumerate}
		\item $A(z,w):=\psi_{ p(z,w)}^{\iota/o} \circ q_{z}(w)-\psi_z^{\iota/o}(w)-z$
		\item $A_0(w):=-\frac{1}{q_0(w)}+\frac{1}{w}-1$
	\end{enumerate}
\end{defi}
The formula for $A(z,w)$ does not depend on whether the ingoing or outgoing coordinate $\psi_z$ is used, and is therefore well defined. The map $A$ is initially defined on the open set 
	$\left\{(z,w) \in V_r : w \notin L_z \text{ and } q_z(w) \notin L_{p(z,w)} \right\} $, where 
	$$
	V_r:=\D(r,r)\times\D(0,r),
	$$ 
	for some sufficiently small $r$.
	
\begin{prop}\label{prop:phianalytic}
	We have:
	\begin{enumerate}
		\item $A_0$ is analytic near $0$, and $A_0(w)=(b_{0,3}-1)w+O(w^2)$.
		\item There exists $r>0$ such that $A(z, \cdot)$ extends analytically to $V_r$.
	\end{enumerate}
\end{prop}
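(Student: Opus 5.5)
For part (1) I will compute directly. From \eqref{map1}, $q_0(w)=w+w^2+b_{0,3}w^3+O(w^4)$. Hence
$$\frac{1}{q_0(w)}=\frac{1}{w}\bigl(1+w+b_{0,3}w^2+O(w^3)\bigr)^{-1}=\frac1w-1+(1-b_{0,3})w+O(w^2).$$
This gives $A_0(w)=-\frac{1}{q_0(w)}+\frac1w-1=(b_{0,3}-1)w+O(w^2)$. The poles cancel, so $A_0$ has a removable singularity at $0$ and is analytic near the origin.

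For part (2) I will write $\psi_z(w)=\frac{1}{2ic}\bigl(\log(\zeta^+(z)-w)-\log(w-\zeta^-(z))\bigr)$ up to additive constants in $2\pi i\mathbb{Z}/(2ic)=\pi\mathbb{Z}/c$. The branch ambiguity only shifts $A$ by a locally constant multiple of $\pi/c$, so the issue is local analyticity plus checking that no monodromy occurs. The quantity
$$\exp\bigl(2ic\,(A(z,w)+z)\bigr)=\frac{\zeta^+(p(z,w))-q_z(w)}{q_z(w)-\zeta^-(p(z,w))}\cdot\frac{w-\zeta^-(z)}{\zeta^+(z)-w}$$
is meromorphic on $V_r$. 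The possible zeros and poles occur along the curves $w=\zeta^\pm(z)$. By the invariance equations \eqref{eq:curve}, when $w=\zeta^+(z)$ we get $q_z(w)=\zeta^+(p(z,w))$, so numerator and denominator both vanish there. The key step is to show that the factor
$$\frac{\zeta^+(p(z,w))-q_z(w)}{\zeta^+(z)-w}$$
extends holomorphically and without zeros to $V_r$, and similarly for $\zeta^-$. The function $F^+(z,w):=\zeta^+(p(z,w))-q_z(w)$ vanishes on the graph $w=\zeta^+(z)$, so it equals $(\zeta^+(z)-w)G^+(z,w)$ with $G^+$ holomorphic, by division in the variable $w$. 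I will then estimate $G^+$. Expanding, $F^+=\zeta^+(z)-w+O(|z|^2+|w|^2)$ to first order, using $\zeta^+\circ p-\zeta^+(z)=O(z^2,zw)$ and $q_z(w)-w=O(\|(z,w)\|^2)$. Dividing by $\zeta^+(z)-w$ gives $G^+=1+O(\|(z,w)\|)$, which is nonvanishing on $V_r$ for $r$ small.

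One caveat is that the quadratic error divided by $\zeta^+(z)-w$ is not obviously $O(\|\cdot\|)$ away from the graph. I will handle this by the division argument itself: $G^+(z,w)=\int_0^1\partial_w F^+\bigl(z,\zeta^+(z)+t(w-\zeta^+(z))\bigr)\,dt$ up to sign, and $\partial_wF^+=-1+O(\|(z,w)\|)$. So $G^+=1+O(r)$ on $V_r$.

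The same argument applied to $\zeta^-$ gives $G^-$ holomorphic and nonvanishing. The exponential above therefore equals $G^+/G^-$, a nonvanishing holomorphic function on the simply connected set $V_r$ that is close to $1$. Consequently $2ic(A+z)=\log(G^+/G^-)+2\pi i k$, with the principal log well defined since $G^+/G^-$ stays in a neighbourhood of $1$. The constant $k$ is locally constant on the original domain of definition of $A$.

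The main obstacle is to show that $k$ is in fact constant, so that $A$ is a single analytic function. I will check this at points near the midpoint $w=\frac12(\zeta^+(z)+\zeta^-(z))$, where all the logs are principal and close to the same values. I will also argue that the original domain, $V_r$ minus the thin sets where $w\in L_z$ or $q_z(w)\in L_{p(z,w)}$, is connected. Since $A$ is small there (it is $O(z)$ by the expansion), we get $k=0$ and $A=\frac{1}{2ic}\log(G^+/G^-)-z$. This formula is the analytic extension of $A$ to $V_r$.
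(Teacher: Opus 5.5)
Your proof of (1), and the core of (2), are the paper's argument. Your $G^\pm$ are exactly the paper's $B_\pm=\frac{q_z(w)-\zeta^\pm(p(z,w))}{w-\zeta^\pm(z)}$. Both arguments make them holomorphic by using \eqref{eq:curve} to rewrite the numerator as a difference of difference quotients in $w$ (your integral formula is the same computation), and both conclude $B_\pm=1+O(z,w)$. The paper then simply writes $A=\frac{1}{2ic}\log(B_+/B_-)-z$ and says nothing about branches.

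The step that fails is your treatment of the branch constant $k$. Your claim that the original domain is connected is false, and $k$ is not identically $0$ on it. Put $R:=\frac{\zeta^+(z)-w}{w-\zeta^-(z)}$. Your own factorization gives $\frac{\zeta^+(p(z,w))-q_z(w)}{q_z(w)-\zeta^-(p(z,w))}=\frac{G^+}{G^-}R$. Hence $L_z=\{R\in(-\infty,0)\}$, while $\{q_z(w)\in L_{p(z,w)}\}=\{\frac{G^+}{G^-}R\in(-\infty,0)\}$. Now $\frac{G^+}{G^-}=1+(\zeta^+(z)-\zeta^-(z))+\dots=1+2icz+\dots$ has argument $\varepsilon\approx 2c\,\re(z)>0$ for $w$ near $\zeta^+(z)$. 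So the two cuts leave $\zeta^+(z)$ at slightly different angles, and the thin region $\{\pi-\varepsilon<\arg R<\pi\}$ between them is cut off from the rest of the domain. On that region $\mathrm{Log}(\frac{G^+}{G^-}R)-\mathrm{Log}\,R=\mathrm{Log}\frac{G^+}{G^-}-2\pi i$. Therefore the principal-branch $A$ equals $\frac{1}{2ic}\mathrm{Log}\frac{G^+}{G^-}-z-\frac{\pi}{c}$ there, i.e.\ $k=-1$ on these wedges. Taken literally, $A$ then has no analytic extension to $V_r$, and no connectedness argument can give $k\equiv0$.

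What your argument actually proves is weaker, and it is what the paper tacitly uses when it merges the two logarithms. $A$ agrees modulo $\frac{\pi}{c}\mathbb{Z}$ with the holomorphic function $\frac{1}{2ic}\mathrm{Log}(G^+/G^-)-z$ on $V_r$. It agrees with it exactly off the wedges, which have opening $O(|z|)$. This covers the neighbourhood of the midpoint of $[\zeta^-(z),\zeta^+(z)]$. It also covers the set $\{\psi^\iota_z(w)\in\rcal_z\}$, where $|\arg R|<\pi-\frac{c}{5}|z|^{1-\nu}$.

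So drop the connectedness claim and define the extension of $A$ to be that function; this is the intended meaning and all that is used later. Also check smallness at the midpoint directly: there $R=1$ and $\frac{G^+}{G^-}R=1+O(z)$. Do not invoke Proposition~\ref{prop:estimateA} for this, since its proof relies on the present statement.
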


\begin{proof} Item (1) is a straightforward computation. For (2), observe that if $r>0$ is small enough, then $P(V_r)\subset V_{2r}$. Moreover for $(z,w) \in V_r$ such that $w \notin L_z$, $q_z(w) \notin L_{p(z,w)}$, we have:
	\begin{align*}
		A(z,w)&=\frac{1}{2ic}\log \left(\frac{q_z(w)-\zeta^+(p(z,w))}{q_z(w)-\zeta^-(p(z,w))} \right) -\frac{1}{2ic}\log \left(\frac{w-\zeta^+(z)}{w-\zeta^-(z)} \right) -z\\
		&=\frac{1}{2ic}\log \left(\frac{q_z(w)-\zeta^+(p(z,w))}{w-\zeta^+(z)} : \frac{q_z(w)-\zeta^-(p(z,w))}{w-\zeta^-(z)} \right)-z\\
		%&=\frac{1}{2ic}\log \left(\frac{q_z(w)-q_z(\zeta^+(z))}{w-\zeta^+(z)} : \frac{q_z(w)-q_z(\zeta^-(z))}{w-\zeta^-(z)} \right)-z.
	\end{align*}
We now consider the expression 
$$B_+(z,w)=\frac{q_z(w)-\zeta^+(p(z,w))}{w-\zeta^+(z)}$$
and show that $B_+(z,w)=1+O(z,w)$ is holomorphic. Observe that 
\begin{align*}
  B_+(z,w)&=\frac{q_z(w)-q_z(\zeta^+(z))}{w-\zeta^+(z)}+\frac{q_z(\zeta^+(z))-\zeta^+(p(z,w))}{w-\zeta^+(z)}\\
  &=\frac{q_z(w)-q_z(\zeta^+(z))}{w-\zeta^+(z)}+\frac{\zeta^+(p(z,\zeta^+(z)))-\zeta^+(p(z,w))}{w-\zeta^+(z)}\\
  &=\frac{q_z(w)-q_z(\zeta^+(z))}{w-\zeta^+(z)}-\frac{h_z(w)-h_z(\zeta^+(z))}{w-\zeta^+(z)}
\end{align*}
where in the second line we have used \eqref{eq:curve} and where $h_z(.)=\zeta^+(p(z,.))$, which is well defined on $\mathbb{D}(r,r)\times \mathbb{D}(0,r)$ for small $r$ since $p(z,w)=z-z^2+O(z^3,z^2w)$.
A direct computation yields 
$\partial_w h_z(w)=(\zeta^+)'(p(z,w))\partial_w p(z,w)=O(z^2)$ and hence 
$$h_z(w)-h_z(\zeta^+(z))=O\left(z^2(w-\zeta^+(z))\right).$$
On the other hand,
\begin{align*}
q_z(w)-q_z(\zeta^+(z))&=(\partial_wq_z)(\zeta^+(z))(w-\zeta^+(z))+O\left((w-\zeta^+(z))^2\right)\\
&=(1+2\zeta^+(z)+\ldots)(w-\zeta^+(z))+O\left((w-\zeta^+(z))^2\right).
\end{align*}
Using these two equations in the computation of $B_+(z,w)$ gives the desired result. The same argument shows that 
$$B_-(z,w)=\frac{q_z(w)-\zeta^-(p(z,w))}{w-\zeta^-(z)}=1+O(z,w)$$
by replacing the sign $+$ with $-$.

Therefore $A(z,w)=\log\left(\frac{B_+(z,w)}{B_-(z,w)}\right)-z=\log\left(\frac{1+O(z,w)}{1+O(z,w)}\right)-z$ has removable singularities at $w=\zeta^{\pm}(z)$.

\end{proof}

\begin{prop}\label{prop:estimateA} Let $\Theta$ be as in \eqref{eq:deftheta}, and let $0<s<r$. Then
	$$
	A(z,w)= z A_0(w) +\left(\Theta+\frac{1}{2}-b_{0,3}-a_{2,1}\right) z^2+	O(z^3, z^2 w),
	$$
	where the constants in the $O$-term are uniform on $V_s$.

\end{prop}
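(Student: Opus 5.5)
Write $A(z,w)=\frac{1}{2ic}\log\bigl(B_+(z,w)/B_-(z,w)\bigr)-z$, as in the proof of Proposition \ref{prop:phianalytic}. Since $A$ is holomorphic on $V_r$, we may expand $A(z,w)=\sum_{k\ge 0} z^k A_k(w)$ with each $A_k$ holomorphic on $\D(0,r)$. The Cauchy estimates on the larger polydisc $V_r$ then give uniform bounds on $V_s$ for the tail $\sum_{k\ge 3} z^kA_k(w)=O(z^3)$. So it suffices to identify three things:
\begin{enumerate}
\item the term $z^0$, i.e. $A(0,w)$;
\item the term $z^1$, i.e. $\partial_zA(0,w)$;
\item the term $z^2$, i.e. $\frac12\partial_z^2A(0,w)$, but only at $w=0$, since its dependence on $w$ is absorbed into $O(z^2w)$.
\end{enumerate}

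\textbf{Step 1: $A(0,w)=0$.} Since $\zeta^\pm(z)=c^\pm z+O(z^2)$, both $\zeta^\pm(z)$ and $\zeta^\pm(p(z,w))$ vanish at $z=0$. Hence $B_\pm(0,w)=q_0(w)/w$ for both signs, their ratio is $1$, and $A(0,w)=0$.

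\textbf{Step 2: the linear term in $z$.} Rather than differentiating the logarithm of a ratio, I would expand
$$\frac{1}{2ic}\log\frac{w-\zeta^+}{w-\zeta^-}=\frac{1}{2ic}\Big[\log\Big(1-\frac{\zeta^+}{w}\Big)-\log\Big(1-\frac{\zeta^-}{w}\Big)\Big]$$
in powers of $\zeta^\pm/w$, working formally for $|z|\ll|w|$. This expansion is justified because $A$ is analytic and the identity extends from that region. Using $c^+-c^-=2ic$ and $(c^+)^2-(c^-)^2=c^+-c^-$ (because $c^++c^-=-1$), the first-order part of $\psi_z(w)$ is $-z/w$ plus $z^2$-corrections. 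Substituting $q_0(w)$ for $w$, and $p(z,w)=z+O(z^2)$ for $z$, the coefficient of $z$ in $A$ becomes
$$-\frac{1}{q_0(w)}+\frac1w-1=A_0(w).$$
This gives the term $zA_0(w)$.

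\textbf{Step 3: the constant $z^2$-coefficient at $w=0$.} This is the main obstacle. The expansion above is singular at $w=0$, so I would compute $\frac12\partial_z^2A(0,0)$ directly from $B_\pm$. Set
$$B_\pm(z,0)=\frac{q(z,0)-\zeta^\pm(p(z,0))}{-\zeta^\pm(z)}.$$
Using $q(z,0)=bz^2+b_{3,0}z^3+O(z^4)$, $p(z,0)=z-z^2+a_{3,0}z^3+O(z^4)$, and the second-order coefficient $d^\pm=c^\pm\Theta+\frac{a_{3,0}+(b-1)b_{0,3}-a_{2,1}}{2}$ of $\zeta^\pm$, I would expand $B_\pm(z,0)=1+\beta_1^\pm z+\beta_2^\pm z^2+O(z^3)$. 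Then I would extract the $z^2$ coefficient of $\frac{1}{2ic}\bigl(\log B_+-\log B_-\bigr)$, namely
$$\frac{1}{2ic}\Big[\beta_2^+-\tfrac12(\beta_1^+)^2-\beta_2^-+\tfrac12(\beta_1^-)^2\Big].$$
The dependence on $a_{2,1}$ enters through $\zeta^\pm(p(z,w))$ and $p$ only at orders $z^2w$ and beyond. At $w=0$ it therefore appears only through $d^\pm$, which is why it is important to use the full second-order coefficient of $\zeta^\pm$.

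The symmetric combinations simplify via $c^\pm$ satisfying $u^2+u+b=0$, together with the identity $c^+c^-=b$. After this simplification the constant should reduce to $\Theta+\frac12-b_{0,3}-a_{2,1}$. If the bookkeeping becomes messy, I would instead check the result against the skew-product computation in \cite{ABT}, where $a_{2,1}=0$, and track only the additional terms produced by $a_{2,1}$ and by the dependence of $p$ on $w$. Finally, the $O(z^2w)$ remainder follows from holomorphy of $A_2(w)-A_2(0)$.
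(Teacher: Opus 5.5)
\textbf{There is a genuine gap in the analytic part of your argument.} You treat $V_r$ as a polydisc centred at the origin. In fact $V_r=\D(r,r)\times\D(0,r)$, where $\D(r,r)=\{|z-r|<r\}$ is a petal with $0$ on its boundary. The functions $\zeta^\pm$ are only defined on this petal, and they are merely \emph{asymptotic} to the formal series $c^\pm z+d^\pm z^2+\cdots$, which in general diverges. So $A$ has no reason to extend holomorphically across $z=0$.

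Consequently the expansion $A(z,w)=\sum_k z^kA_k(w)$ with $A_k=\frac1{k!}\partial_z^kA(0,\cdot)$ is not available. Nor can Cauchy estimates bound the tail uniformly on $V_s$, since $V_s$ is not relatively compact in $V_r$ (both touch $z=0$). The nontrivial content of the statement is exactly the uniformity of the $O$-constants on $V_s$ as $z\to0$, including the region $|w|\lesssim|z|$ where the expansion in $\zeta^\pm/w$ is meaningless. Your remark in Step 2 that the expansion ``extends because $A$ is analytic'' does not close this, because what must be extended is an estimate, not an identity.

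\textbf{How to repair it.} This is essentially what the paper does: first keep $w$ away from $0$.
On a circle $|w|=s'$ with $s<s'<r$, the expansion in $z/w$ gives $A=za_1(w)+z^2a_2(w)+O(z^3)$ uniformly as $z\to0$ in $\D(r,r)$. Here $a_1=A_0$, and $a_2$ a priori has poles at $w=0$.
Since $A(z,\cdot)$ is holomorphic on $\D(0,r)$ for each fixed $z$ (Proposition \ref{prop:phianalytic}), the negative powers of $w$ in $a_2$ must cancel. To see this, integrate against $w^{k-1}$ over $|w|=s'$, divide by $z^2$, and let $z\to0$.
The maximum principle in $w$, applied to $A(z,\cdot)-za_1-z^2a_2$, then transfers the $O(z^3)$ bound from the circle to the whole disc, and $z^2(a_2(w)-a_2(0))=O(z^2w)$.

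\textbf{Your Step 3 and one sign error.} Once this is in place, your Step 3 is a legitimate alternative route to the constant. The paper reads $a_2(0)$ off the Laurent expansion of $\Theta(\frac1w-\frac1{q_0})+\frac12(\frac1{q_0^2}-\frac1{w^2}+\frac{2-2a_{2,1}w}{q_0})$. You instead compute $A(z,0)$ from $B_\pm(z,0)$; one finds $\beta_1^\pm=c^\pm$, and the bookkeeping does give $\Theta+\frac12-b_{0,3}-a_{2,1}$.
Watch one sign, though. Since $c^++c^-=-1$, we have $(c^+)^2-(c^-)^2=-(c^+-c^-)$, not $c^+-c^-$. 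With your sign, the terms $-\frac12(\beta_1^\pm)^2$ would contribute $-\frac12$ instead of $+\frac12$ to the constant.
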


\begin{proof} Let $K$ be a compact in $V_r\backslash\{w=0\}$. A straightforward computation gives 
	\begin{align}
		\frac{1}{2ic}\log\left(\frac{w-\zeta^+(z)}{w-\zeta^-(z)} \right)&=\frac{1}{2ic}\left( \frac{\zeta^-(z)-\zeta^+(z)}{w}-\frac{(\zeta^+(z))^2-(\zeta^-(z))^2}{2w^2}\right)+O(z^3)\nonumber\\
		&=-\frac{z}{w}-\frac{\Theta z^2}{w}+\frac{z^2}{2w^2}+O(z^3).\label{eq:asymppsi}
	\end{align}
	Using this, we obtain that
	\begin{align*}
		\psi_{ p(z,w)}^{\iota/o} \circ q_{z}(w)&=-\frac{ p(z,w)}{ q_{z}(w)}-\frac{\Theta(p(z,w))^2}{ q_{z}(w)}+\frac{( p(z,w))^2}{2( q_{z}(w))^2}+O(z^3)\\
		&=-\frac{ z-z^2+a_{2,1}z^2w}{ q_{0}(w)}- \frac{\Theta z^2}{q_{0}(w)}+\frac{ z^2}{2(q_{0}(w))^2}+O(z^3,z^2w).
	\end{align*}
	This implies that
	\begin{align*}
		A(z,w)&=zA_0(w)+\Theta z^2\left(\frac{1}{w}-\frac{1}{q_{0}(w)}\right)\\
		&+\frac{z^2}{2}\left(\frac{1}{(q_{0}(w))^2}-\frac{1}{w^2}+\frac{2-2a_{2,1}w}{q_0(w)}\right)+O(z^3,z^2w)\\
		&=zA_0(w)+\Theta z^2+\frac{z^2}{2}(1-2b_{0,3}-2a_{2,1})+O(z^3,z^2w)\\
		&= z A_0(w) +\left(\Theta+\frac{1}{2}- b_{0,3}-a_{2,1}\right) z^2+	O(z^3, z^2 w).
	\end{align*}
	Here we used the fact that $A(z,\cdot)$ is analytic, hence all terms in $w$ with negative powers are cancelled. 
	
Finally, we note that the constant in the $O(z^3, z^2 w)$ a priori depends on $K \subset V_r\backslash\{w=0\}$.
By Proposition \ref{prop:phianalytic}, the function $\phi(z,w):=A(z,w)-z A_0(w)-\left(\Theta+\frac{1}{2}- b_{0,3}-a_{2,1}\right) z^2$ is holomorphic on $V_r$ gives the desired uniformity on $V_{s}$.
	
\end{proof}

\begin{defi}\label{defi:R}
	Let $\nu\in(\frac{1}{2},\frac{2}{3})$ and let 
	$$\mathcal{R}_z:=\{W \in \C : \frac{|z|^{1-\nu}}{10}<\re(W)<\frac{\pi}{c}-\frac{|z|^{1-\nu}}{10} \text{ and } -\frac{1}{2} < \im(W)<\frac{1}{2} \}$$
\end{defi}

\begin{defi}
	Let $\chi_z(W)=W+ ( b_{0,3}-1) R(z,W)$, where 
	\begin{equation}
		R(z,W):=cz e^{W} F_c(W)
	\end{equation}
	and $F_c$ is the primitive on $\rcal_0$ of $W \mapsto e^{-W} \cot(cW)$ vanishing at $\frac{\pi}{2c}$.
\end{defi}

A straightforward computation shows that 	$R(z,W)$ is a solution of the linear PDE
\begin{equation}
	-z \frac{\partial R}{\partial z} + \frac{\partial R}{\partial W} =c z \cot(cW).
\end{equation}

\begin{lem}\label{lem:invphi}
	We have
	$$(\psi_{z}^{\iota/o})^{-1}(W)=-c z \cot(cW) - \frac{z}{2}+{O(z^2 \cot(cW),z^2)} $$
\end{lem}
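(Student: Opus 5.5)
The plan is to invert $\psi_z$ explicitly and then expand the result in $z$. Set $W$ equal to $\psi_z^{\iota}(w)$; the outgoing case differs only by the shift $\pm\frac{\pi}{2c}$ and is handled the same way. This gives
$$\frac{\zeta^+(z)-w}{w-\zeta^-(z)}=e^{2ic(W-\frac{\pi}{2c})}=-e^{2icW}.$$
Solving this linear fractional equation for $w$ gives
$$w=\frac{\zeta^+(z)+\zeta^-(z)e^{2icW}}{1+e^{2icW}}.$$

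Next I write $\zeta^\pm$ in terms of their sum and difference:
$$\zeta^\pm(z)=m(z)\pm\tfrac12 d(z),\qquad m=\tfrac12(\zeta^++\zeta^-),\quad d=\zeta^+-\zeta^-.$$
Substituting into the expression for $w$ gives
$$w=m(z)+\frac{d(z)}{2}\cdot\frac{1-e^{2icW}}{1+e^{2icW}}=m(z)-\frac{i\,d(z)}{2}\tan(cW).$$

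This has $\tan$ rather than $\cot$. The reason is the $\pm\frac{\pi}{2c}$ shift: $\tan(c(W'-\frac{\pi}{2c}))=-\cot(cW')$. So I need to be careful about which variable the stated formula is written in, namely whether $\psi^{\iota/o}$ absorbs the shift so that the result reads $\cot(cW)$. With the sign conventions of the definition, one of the two branches gives $-\cot$ after the shift by $\pi/(2c)$. For the other branch, the shift by $\pm\pi/c$ is a period of $\cot(c\cdot)$, so both branches give the same formula. I will check this sign bookkeeping first, since it is the only place where an error could slip in.

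Once the sign is settled, the remaining step is to insert the expansion $\zeta^\pm(z)=c^\pm z+O(z^2)$ with $c^\pm=-\frac12\pm ic$. This gives
$$m(z)=-\frac z2+O(z^2),\qquad d(z)=2icz+O(z^2).$$
Hence
$$-\frac{i\,d(z)}{2}\bigl(-\cot(cW)\bigr)=\frac{i\cdot 2icz}{2}\cot(cW)+O(z^2\cot(cW))=-cz\cot(cW)+O(z^2\cot(cW)).$$
Adding $m(z)$ gives exactly
$$-cz\cot(cW)-\frac z2+O(z^2\cot(cW),z^2).$$

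The error terms come from the $O(z^2)$ parts of $m$ and $d$, multiplied respectively by $1$ and by $\cot(cW)$. These are uniform because the explicit formula is exact and the expansions of $\zeta^\pm$ hold uniformly on the petal $\D(r,r)$. The second-order coefficients of $\zeta^\pm$, which involve $\Theta$, are not needed here.

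The main obstacle is only the branch and sign bookkeeping: whether $\psi^\iota$ or $\psi^o$ produces $\tan$ versus $-\cot$. The key point is that $W\mapsto\cot(cW)$ is $\pi/c$-periodic, so the two branches, which differ by $\pi/c$, give the same inverse formula. This is consistent with the fact that $A$ does not depend on the choice of branch.
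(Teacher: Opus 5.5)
Your route is the same as the paper's: invert $\psi_z^{\iota/o}$ explicitly and substitute $\zeta^\pm(z)=c^\pm z+O(z^2)$. However, the inversion step as written is wrong. The final formula comes out right only because a second, unjustified step compensates for it.

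From $\frac{\zeta^+(z)-w}{w-\zeta^-(z)}=-e^{2icW}$ one gets
$$w=\frac{\zeta^+(z)-\zeta^-(z)e^{2icW}}{1-e^{2icW}},$$
which is exactly the paper's formula. Your expression with $+$ signs instead solves the equation with right-hand side $+e^{2icW}$, i.e.\ it is the inverse of the \emph{unshifted} $\psi_z$. Consequently:
\begin{itemize}
\item your intermediate claim $w=m-\frac{i\,d}{2}\tan(cW)$ for $W=\psi_z^\iota(w)$ is false;
\item your later replacement of $\tan(cW)$ by $-\cot(cW)$ is not an identity in $W$. It re-applies the shift by $\frac{\pi}{2c}$ that you had already used in writing $-e^{2icW}$ and then lost when solving.
\end{itemize}
The two slips cancel. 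Still, the ``sign bookkeeping'' you postpone is precisely the step that is incorrect in your text, so the argument as written is not a valid derivation.

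The repair takes one line. The numerator is $\zeta^+-\zeta^-e^{2icW}=m(1-e^{2icW})+\frac{d}{2}(1+e^{2icW})$, and $\frac{1+e^{2icW}}{1-e^{2icW}}=i\cot(cW)$. Together these give $w=m(z)+\frac{i\,d(z)}{2}\cot(cW)$ directly, with no after-the-fact shift. Then $d=2icz+O(z^2)$ and $m=-\frac{z}{2}+O(z^2)$ yield the lemma with the stated error terms, exactly as in the rest of your computation.

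Your other points are correct. The outgoing branch gives the same formula, since the defining equation for $\psi^o_z$ is literally identical because $e^{\pm i\pi}=-1$. Your uniformity remark is also fine.
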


\begin{proof}
	We have:
	\begin{equation}
		(\psi_z^{\iota/o})^{-1}(W)=\frac{\zeta^+(z)-\zeta^-(z) e^{2icW}}{1- e^{2icW}}
	\end{equation}
	and using the asymptotic expansion of $ \zeta^{\pm}(z)$, we find
	\begin{equation}
		(\psi_z^{\iota/o})^{-1}(W)=-cz \cot(cW)-\frac{z}{2}+{O(z^2 \cot(cW),z^2)}.
	\end{equation}
\end{proof}

\begin{lem}\label{lem:dR}
	Assume that $\psi_z^\iota(w) \in \rcal_z$, and let $W:=\psi_z^\iota(w)$, $W_1:=\psi_{p(z,w)}^\iota \circ q_z(w)$ and $z_1:=p(z,w)$. Then 
	\begin{equation}
		\left|R(z_1, W_1)-R(z,W) - cz^2 \cot(cW) \right| = O\left(|z|^{2+\delta}\right).
	\end{equation}
 with $\delta:=2\nu -1>0$.
\end{lem}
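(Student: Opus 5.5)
We expand $R(z_1,W_1)-R(z,W)$ to first order, using that $R$ solves the linear PDE $-z\partial_z R+\partial_W R = cz\cot(cW)$. Set $\Delta z:=z_1-z$ and $\Delta W:=W_1-W$.

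The first step is to estimate the increments. Since $p(z,w)=z-z^2+O(z^3,z^2w)$, we have $\Delta z=-z^2+O(z^3,z^2w)$. By definition of $A$, $\Delta W = z + A(z,w)$, and Proposition~\ref{prop:estimateA} together with Proposition~\ref{prop:phianalytic}(1) gives $A(z,w)=O(zw,z^2)$. Hence $\Delta W = z+O(zw,z^2)$.

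Next we need the size of $w$. Since $W\in\rcal_z$, Lemma~\ref{lem:invphi} gives $w=-cz\cot(cW)-\frac z2+O(z^2\cot(cW),z^2)$. On $\rcal_z$ the distance from $W$ to the poles $0,\pi/c$ of $\cot(cW)$ is at least $|z|^{1-\nu}/10$. Therefore $|\cot(cW)|=O(|z|^{\nu-1})$ and $|w|=O(|z|^{\nu})$. So $\Delta z=-z^2+O(|z|^{2+\nu})$ and $\Delta W=z+O(|z|^{1+\nu})$.

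Then we Taylor expand $R$. Note $R(z,W)=cz\,G(W)$ with $G(W)=e^WF_c(W)$, where $G'=G+\cot(cW)$. Since $F_c$ has logarithmic singularities at $0$ and $\pi/c$, we get $G=O(\log|z|)$, $G'=O(|z|^{\nu-1})$ and $G''=O(|z|^{2\nu-2})$ on a neighbourhood of $\rcal_z$ of size comparable to $|z|^{1-\nu}$. That neighbourhood contains $W_1$, because $|\Delta W|=O(|z|)\ll |z|^{1-\nu}$. Writing
\[
R(z_1,W_1)-R(z,W)=c\,\Delta z\,G(W_1)+cz\,(G(W_1)-G(W)),
\]
we treat the two terms separately. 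For the second, the mean value bound gives
\[
G(W_1)-G(W)=G'(W)\Delta W+O(|G''|\,|\Delta W|^2).
\]
The main term of $cz\,G'(W)\Delta W$ is $cz^2G'(W)$. The correction $cz\,G'(W)\cdot O(|z|^{1+\nu})$ is $O(|z|^{1+2\nu})$, and the quadratic remainder contributes $cz\cdot O(|z|^{2\nu-2}|z|^2)=O(|z|^{1+2\nu})$. For the first term, $c\,\Delta z\,G(W_1)=-cz^2G(W)+O(|z|^{2+\nu}\log|z|)+O(z^2\cdot|z|\cdot|z|^{\nu-1})$. Summing, and using $G'-G=\cot(cW)$ (which is exactly the content of the PDE), the main terms combine to $cz^2(G'(W)-G(W))=cz^2\cot(cW)$. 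All errors are $O(|z|^{1+2\nu})=O(|z|^{2+\delta})$ with $\delta=2\nu-1$, since the competing exponents satisfy $2+\nu-\epsilon>1+2\nu$ because $\nu<1$.

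The main obstacle is keeping the errors uniform near the ends of $\rcal_z$, where $\cot$ blows up. The worst terms are $cz\,G'(W)\cdot O(zw)$ and the second-order remainder, both of exact size $|z|^{1+2\nu}$. To control them I would track the cutoff $|z|^{1-\nu}/10$ carefully. I would also verify that the $O(zw)$ part of $A$ is really of that size, using the factor $zA_0(w)=O(zw)$ rather than anything coarser. This is precisely where the choice $\nu>1/2$ (so that $\delta>0$) is needed.
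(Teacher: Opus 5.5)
Your argument is correct, but it takes a different route from the paper's.

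The paper does not expand $R$ directly. It introduces the skew-product comparison point $\hat z_1=p(z,0)$, $\hat W_1=\psi^\iota_{p(z,0)}\circ q_z(w)$ and quotes \cite[Lemma 4.9]{ABT} for $R(\hat z_1,\hat W_1)-R(z,W)-cz^2\cot(cW)=O(|z|^{2+\delta})$. It then bounds $R(\hat z_1,\hat W_1)-R(z_1,W_1)$ by the mean value theorem, using $\partial_zR=O(\log\frac{1}{|z|})$, $\partial_WR=O(|z|^{\nu})$, $\hat z_1-z_1=O(|z|^{2+\nu})$ and $\hat W_1-W_1=O(|z|^{1+\nu})$.

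You instead prove the estimate from scratch. With $R=cz\,G(W)$ and $G'=G+\cot(cW)$, you take $\Delta z=-z^2+O(|z|^{2+\nu})$ and $\Delta W=z+A(z,w)=z+O(|z|^{1+\nu})$ from Proposition~\ref{prop:estimateA}. The main term then cancels exactly through $G'-G=\cot(cW)$, and each of your error terms is indeed $O(|z|^{1+2\nu})$.

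The paper's version is shorter. It isolates the one genuinely new ingredient of the non-skew setting, the mixed $z^2w$ terms in $p$, as a perturbation of a published estimate. The price is relying on the lemma of \cite{ABT} applying verbatim to $\psi^\iota_{p(z,0)}\circ q_z$ for the present $q$.

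Your version is self-contained within the paper, because Proposition~\ref{prop:estimateA} has already absorbed the coupling between the coordinates into $A$. It also makes transparent that the lemma is the discrete form of the PDE for $R$, and that $\nu>\frac12$ is used only to get $|z|^{1+2\nu}=o(|z|^2)$.

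One point worth stating explicitly: the identity $W_1-W=z+A(z,w)$ requires that no branch jump of the logarithm occurs. This holds because $W+z+A(z,w)$ lies within $O(|z|)$ of $W$. It is therefore strictly inside the strip $\{0<\re(\cdot)<\pi/c\}$, since the margin $|z|^{1-\nu}/10$ of $\rcal_z$ is much larger than $|z|$. Meanwhile $W_1$ lies in that same strip and can differ from $W+z+A(z,w)$ only by a multiple of $\pi/c$, so that multiple is zero.
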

\begin{proof}
Let $\hat{z}_1=p(z,0)$ and $\hat{W}_1=\psi_{p(z,0)}^\iota \circ q_z(w)$ Then, by \cite[Lemma 4.9]{ABT} we have
$$\left|R(\hat{z}_1, \hat{W}_1)-R(z,W) - cz^2 \cot(cW) \right| = O\left(|z|^{2+\delta}\right).$$
It remains to estimate the error $R(\hat{z}_1, \hat{W}_1)-R(z_1,W_1).$
Observe that on $\rcal_z$ we have $\partial_z R(z,W)=ce^WF_c(W)= O(\log\frac{1}{|z|})$ and $\partial_W R(z,W)=O(z^{\nu})$. Moreover, $\hat{z}_1-z_1=O(|z|^{2+\nu})$ and $\hat{W}_1-W_1=O(|z|^{1+\nu})$ and therefore
$$R(\hat{z}_1, \hat{W}_1)-R(z_1,W_1)=O(\partial_z R(z,W)(\hat{z}_1-z_1) +\partial_W R(z,W)(\hat{W}_1-W_1))=O(|z|^{1+2\nu})=O(|z|^{2+\delta})$$

\end{proof}

\begin{defi} We define $\tilde A(z,w) :=\chi_{p(z,w)} \circ \psi_{p(z,w)}^\iota \circ q_{z}(w) - \chi_z \circ \psi_z^\iota(w)-z $.
\end{defi}

\begin{prop}[Almost translation property]\label{prop:almosttransv2}
 Let $\delta:=2\nu-1>0$. Then
	$$|\tilde A(z,w)-\Lambda z^2|=O\left(|z|^{2+\delta}\right)$$
	for all $(z,w)$ such that $\psi_z^\iota(w) \in \mathcal{R}_z$, where $\Lambda :=\Theta+1-\frac{3 b_{0,3}}{2}-a_{2,1}$
\end{prop}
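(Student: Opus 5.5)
We plan to expand $\tilde A$ directly from its definition by splitting it into the part coming from $\psi^\iota$ and the part coming from the correction term in $\chi$. Write $W:=\psi_z^\iota(w)$, $z_1:=p(z,w)$ and $W_1:=\psi_{z_1}^\iota\circ q_z(w)$. Since $\chi_z(W)=W+(b_{0,3}-1)R(z,W)$, we get
\begin{align*}
\tilde A(z,w)&=W_1-W-z+(b_{0,3}-1)\bigl(R(z_1,W_1)-R(z,W)\bigr)\\
&=A(z,w)+(b_{0,3}-1)\bigl(R(z_1,W_1)-R(z,W)\bigr),
\end{align*}
where $A$ is the error function defined above. Lemma \ref{lem:dR} controls the second bracket: it equals $cz^2\cot(cW)+O(|z|^{2+\delta})$. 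The task therefore reduces to showing
$$A(z,w)=\Lambda z^2-(b_{0,3}-1)cz^2\cot(cW)+O(|z|^{2+\delta})$$
on the region where $W\in\rcal_z$.

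For $A$ we use Proposition \ref{prop:estimateA}:
$$A(z,w)=zA_0(w)+\Bigl(\Theta+\tfrac12-b_{0,3}-a_{2,1}\Bigr)z^2+O(z^3,z^2w).$$
Next we express $w$ through $W$ using Lemma \ref{lem:invphi}:
$$w=-cz\cot(cW)-\tfrac z2+O\bigl(z^2\cot(cW),z^2\bigr).$$
On $\rcal_z$ the real part of $W$ stays at distance at least $|z|^{1-\nu}/10$ from $0$ and from $\pi/c$, so $|\cot(cW)|=O(|z|^{\nu-1})$. Hence $w=O(|z|^\nu)$.

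By Proposition \ref{prop:phianalytic}, $A_0(w)=(b_{0,3}-1)w+O(w^2)$. Substituting gives
$$zA_0(w)=(b_{0,3}-1)\Bigl(-cz^2\cot(cW)-\tfrac{z^2}{2}\Bigr)+O\bigl(z^3\cot(cW),z^3,zw^2\bigr).$$
The term $-(b_{0,3}-1)cz^2\cot(cW)$ cancels the contribution $(b_{0,3}-1)cz^2\cot(cW)$ from Lemma \ref{lem:dR}. This cancellation is exactly what the PDE for $R$ was built to produce. The leftover $-\tfrac{b_{0,3}-1}{2}z^2$ combines with the constant from Proposition \ref{prop:estimateA}:
$$\Theta+\tfrac12-b_{0,3}-a_{2,1}-\tfrac{b_{0,3}}{2}+\tfrac12=\Theta+1-\tfrac{3b_{0,3}}{2}-a_{2,1}=\Lambda,$$
which is the constant in the statement.

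The main obstacle is controlling the error terms uniformly on $\rcal_z$, where $\cot(cW)$ may be as large as $|z|^{\nu-1}$. We have:
\begin{itemize}
\item $z^3\cot(cW)=O(|z|^{2+\nu})$;
\item $zw^2=O(|z|^{1+2\nu})=O(|z|^{2+\delta})$;
\item $z^2w=O(|z|^{2+\nu})$;
\item $z^3=O(|z|^{2+\delta})$, since $\delta<1$.
\end{itemize}
Because $\nu>\delta=2\nu-1$ (as $\nu<1$), every one of these is $O(|z|^{2+\delta})$, so the choice $\nu\in(\frac12,\frac23)$ is what makes all errors fit. We must also check that $w$ lies in $V_s$ for some $s<r$, so that the uniform constants of Proposition \ref{prop:estimateA} apply. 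This follows from $w=O(|z|^\nu)$ together with $z\in\D(r,r)$ and $r$ small.

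Finally, the same smallness of $w$ is needed for the hypothesis of Lemma \ref{lem:dR}. We take that lemma as given, as the problem allows. If we had to justify it, we would use the estimates $\hat W_1-W_1=O(|z|^{1+\nu})$ and $\hat z_1-z_1=O(|z|^{2+\nu})$, both of which rely on $w=O(|z|^\nu)$.
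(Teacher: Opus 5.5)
Your proposal is correct and follows essentially the same route as the paper. You split $\tilde A = A + (b_{0,3}-1)\bigl(R(z_1,W_1)-R(z,W)\bigr)$, handle the $R$-difference with Lemma \ref{lem:dR}, and expand $A$ via Proposition \ref{prop:estimateA} and Lemma \ref{lem:invphi} so that the $\cot(cW)$ terms cancel and the constant $\Lambda$ appears; the remaining errors are then absorbed using $w=O(|z|^\nu)$ and $\nu\in(\frac{1}{2},\frac{2}{3})$, exactly as in the paper's own bookkeeping.
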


\begin{proof} Let $z_1:=p(z,w)$, $W:=\psi_z^\iota(w)$ and $W_1:=\psi_{z_1}^\iota \circ q_z(w)$. 
	We have
	\begin{align*}
		\tilde A(z,w) &=\chi_{z_1} \circ \psi_{z_1}^\iota \circ q_{z}(w) - \chi_z \circ \psi_z^\iota(w)-z \\
		&=\psi_{z_1}^\iota \circ q_{z}(w) - \psi_z^\iota(w)-z + ( b_{0,3}-1) (R(z_1, W_1) - R(z,W)).
	\end{align*}
	By Lemma \ref{lem:dR}
	\begin{align*}
		|\tilde A(z,w)-A(z,w) - cz^2 ( b_{0,3}-1) \cot(cW)|= O\left(|z|^{2+\delta}\right)
	\end{align*}

	On the other hand, by Proposition \ref{prop:estimateA} we have
	\begin{align*}
		A(z,w)&=z A_0(w) + \left(\Theta+\frac{1}{2}- b_{0,3}-a_{2,1}\right)z^2 + O(z^2 w,z^3)\\
		&=( b_{0,3}-1)zw+ \left(\Theta+\frac{1}{2}- b_{0,3}-a_{2,1}\right)z^2 + O(zw^2,z^2 w,z^3)\\
	\end{align*}
	so using Lemma \ref{lem:invphi}:
	
	$$
	A(z,w) =(1- b_{0,3}) cz^2 \cot(cW) + z^2 \left(\Theta+1-\frac{3 b_{0,3}}{2}-a_{2,1} \right)+ O\left(zw^2,z^2 w,z^3,z^3 \cot(cW)\right) 
	$$
	
	Combining these estimates and using that $z^3 \cot(cW)=O(|z|^{2+\nu})=O(|z|^{2+\delta})$ on $\mathcal{R}_z$, we get:
	\begin{equation}\label{eq:tildeA}
		|\tilde A(z,w) - \Lambda z^2| =O\left(|zw^2|,|z^2 w|,|z|^3,|z|^{2+\delta}\right) 
	\end{equation}
	Finally, since by assumption $\psi_z(w) \in \mathcal{R}_z$, we have 
	$|w| = O\left(|z|^{\nu}\right)$. Indeed, by Lemma \ref{lem:invphi} and the definition of $\rcal_z$,
		$w = O( z \sin(cW)^{-1})= O\left( \frac{|z|}{|z|^{1-\nu}}\right) = O(|z|^{\nu}) $.

		Using $\nu\in(\frac{1}{2},\frac{2}{3})$:
		\begin{itemize}
			\item $|zw^2| = O\left(|z|^{1+2\nu}\right) = O(|z|^{2+\delta})$, since $\delta=2\nu-1$ by definition;
			\item $|z^2w|=O\left(|z|^{2+\nu} \right) = O(|z|^{2+\delta}) $ since $2+\nu > 1+2\nu$;
			\item $|z^3|=O\left(|z|^{2+\delta}\right)$ (again, since $1+2\nu <3$).
		\end{itemize}
		Therefore \eqref{eq:tildeA} gives the required estimate 
		$$|\tilde A(z,w)-\Lambda z^2|=O\left(|z|^{2+\delta}\right).$$
\end{proof}

The proof of the following lemma can be found in \cite[Lemma 4.12]{ABT}.
\begin{lem}\label{lem:DLFc}
	As $W \to 0$ in $\rcal_0$, we have
	\begin{equation}
		F_c(W)=\frac{1}{c} \log (cW) - \frac{1}{c}\int_0^{\frac{\pi}{2c}} e^{-u} \log \sin(cu) du + o(1)
	\end{equation}
	Similarly, as $W \to \frac{\pi}{c}$ in $\rcal_0$, we have:
	\begin{equation}
		F_c(W) =e^{-\frac{\pi}{c}}\frac{1}{c} \log\left(\pi - cW\right) + \frac{1}{c} \int_{\frac{\pi}{2c}}^{\frac{\pi}{c}} e^{-u} \log \sin(cu) du + o(1)
	\end{equation}
\end{lem}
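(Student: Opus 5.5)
The statement to prove is Lemma \ref{lem:DLFc}: the asymptotics of $F_c$ near the two endpoints of $\rcal_0$. We use only the definition of $F_c$ as the primitive of $e^{-W}\cot(cW)$ vanishing at $\frac{\pi}{2c}$. The plan is to isolate the logarithmic singularity of $\cot(cW)$ at each endpoint and integrate by parts.

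For $W\to 0$ in $\rcal_0$, write
$$F_c(W)=-\int_W^{\frac{\pi}{2c}} e^{-u}\cot(cu)\,du,$$
integrating along a path inside $\rcal_0$, which is simply connected, so the integral is path independent. We use the identity $\cot(cu)=\frac{1}{c}\frac{d}{du}\log\sin(cu)$. Here $\log\sin(cu)$ is the branch that is real on $(0,\frac{\pi}{c})$; it is well defined on $\rcal_0$ because $\sin(cu)$ has no zeros there.

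Integration by parts gives
$$F_c(W)=-\frac{1}{c}\Big[e^{-u}\log\sin(cu)\Big]_W^{\frac{\pi}{2c}}-\frac{1}{c}\int_W^{\frac{\pi}{2c}}e^{-u}\log\sin(cu)\,du.$$
The boundary term at $\frac{\pi}{2c}$ vanishes since $\log\sin(\pi/2)=0$. The boundary term at $W$ equals $\frac{1}{c}e^{-W}\log\sin(cW)$.

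Now let $W\to 0$ inside the strip $|\im W|<\frac12$. We have $\log\sin(cW)=\log(cW)+O(W^2)$ and $e^{-W}=1+O(W)$, so
$$e^{-W}\log\sin(cW)=\log(cW)+O(|W\log W|)=\log(cW)+o(1).$$
The remaining integral $\int_W^{\frac{\pi}{2c}}e^{-u}\log\sin(cu)\,du$ converges to $\int_0^{\frac{\pi}{2c}}e^{-u}\log\sin(cu)\,du$, because the integrand has only an integrable logarithmic singularity at $0$. Concretely, $\int_0^W|\log(cu)|\,|du|=O(|W\log|W||)\to0$ along the straight segment from $0$ to $W$. Combining the two limits gives the first formula.

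For $W\to\frac{\pi}{c}$ the computation is the same. Integration by parts gives
$$F_c(W)=\frac{1}{c}e^{-W}\log\sin(cW)+\frac{1}{c}\int_{\frac{\pi}{2c}}^{W}e^{-u}\log\sin(cu)\,du.$$
Use $\sin(cW)=\sin(\pi-cW)$, so $\log\sin(cW)=\log(\pi-cW)+O((\pi-cW)^2)$. Also $e^{-W}\to e^{-\pi/c}$ with error $O(|W-\frac{\pi}{c}|)$, and this error times the logarithm is $o(1)$. The integral converges to $\int_{\frac{\pi}{2c}}^{\frac{\pi}{c}}e^{-u}\log\sin(cu)\,du$ by the same integrability argument. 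This yields the second formula.

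The only delicate point is the choice of logarithm branches. We must check that the branch of $\log\sin(cu)$ real on $(0,\frac{\pi}{c})$ agrees with the principal $\log(cW)$ and $\log(\pi-cW)$ as $W$ approaches the endpoints within the strip. This holds because $\sin(cW)/(cW)\to1$ and $\sin(cW)/(\pi-cW)\to1$ at the respective endpoints, so no multiple of $2\pi i$ appears. One must also use that $\rcal_0$ stays away from the other zeros of $\sin(cu)$, which is true since $|\im W|<\frac12$ and $0<\re W<\frac{\pi}{c}$.
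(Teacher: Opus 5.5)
Your proof is correct: writing $\cot(cu)=\frac{1}{c}\frac{d}{du}\log\sin(cu)$ and integrating by parts gives $F_c(W)=\frac{1}{c}e^{-W}\log\sin(cW)+\frac{1}{c}\int_{\frac{\pi}{2c}}^{W}e^{-u}\log\sin(cu)\,du$, and both endpoint expansions, together with the branch and integrability checks, follow as you describe. The paper does not prove this lemma itself and defers to \cite[Lemma 4.12]{ABT}; I expect that proof to be this same integration by parts, since the form of the constants matches it exactly.
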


\begin{lem}\label{lem:invcomp}
	For $W\in\mathcal{R}_z$ and $\delta=2\nu-1>0$ we have
	$$(\chi_z\circ \psi_{z}^{\iota})^{-1}(W)=(\psi_{z}^{\iota})^{-1}(W)+{O( |z|^{1+\delta}\log\frac{1}{|z|})} $$
\end{lem}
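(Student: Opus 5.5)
We have to show that inverting $\chi_z$ on $\rcal_z$ moves a point by at most $O(|z|^{1+\delta}\log\frac{1}{|z|})$, and that this displacement survives the passage through $(\psi_z^\iota)^{-1}$. The plan is to invert $\chi_z$ by a fixed-point (Rouché/Koebe-type) argument, and then transport the resulting displacement in $W$ through $(\psi_z^\iota)^{-1}$ using its explicit formula from Lemma \ref{lem:invphi}.

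\textbf{Step 1: $\chi_z$ is a small perturbation of the identity.} Recall $\chi_z(W)=W+(b_{0,3}-1)R(z,W)$ with $R(z,W)=cze^WF_c(W)$. By Lemma \ref{lem:DLFc}, $F_c$ has only logarithmic singularities at $0$ and $\pi/c$. On $\rcal_z$ the real part of $W$ stays at distance at least $|z|^{1-\nu}/10$ from these points, so
$$R(z,W)=O\left(|z|\log\tfrac{1}{|z|}\right)\quad\text{on }\rcal_z.$$
Also $\partial_W R=cz\bigl(e^WF_c(W)+\cot(cW)\bigr)=O(|z|^{\nu})$ there, because $\cot(cW)=O(|z|^{\nu-1})$.

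\textbf{Step 2: inverting $\chi_z$.} Given $W\in\rcal_z$, solve $\chi_z(V)=W$ by iterating $V\mapsto W-(b_{0,3}-1)R(z,V)$. This map is a contraction (Lipschitz constant $O(|z|^\nu)$) on a disk of radius of order $|z|\log\frac1{|z|}$ about $W$. That disk lies in a slightly enlarged rectangle $\rcal'_z$, since $|z|\log\frac1{|z|}\ll|z|^{1-\nu}$; the same bounds hold on $\rcal'_z$ with different constants, so there is no loss in enlarging. Hence $V=\chi_z^{-1}(W)$ exists and
$$V-W=O\left(|z|\log\tfrac1{|z|}\right).$$
Consequently $(\chi_z\circ\psi_z^\iota)^{-1}(W)=(\psi_z^\iota)^{-1}(V)$.

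\textbf{Step 3: transporting through $(\psi_z^\iota)^{-1}$.} Use the explicit formula
$$(\psi_z^\iota)^{-1}(W)=\frac{\zeta^+(z)-\zeta^-(z)e^{2icW}}{1-e^{2icW}}.$$
Its $W$-derivative is $(\zeta^+-\zeta^-)\,\frac{2ic\,e^{2icW}}{(1-e^{2icW})^2}$, which is of size $O\left(|z|/\sin^2(cW)\right)=O\left(|z|\cdot|z|^{2\nu-2}\right)=O(|z|^{2\nu-1})=O(|z|^{\delta})$ on $\rcal'_z$. The mean value inequality along the segment $[W,V]\subset\rcal'_z$ then gives
$$\bigl|(\psi_z^\iota)^{-1}(V)-(\psi_z^\iota)^{-1}(W)\bigr|=O(|z|^\delta)\cdot O\left(|z|\log\tfrac1{|z|}\right)=O\left(|z|^{1+\delta}\log\tfrac1{|z|}\right),$$
which is the claimed estimate.

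The main obstacle is the uniformity near the vertical edges of $\rcal_z$. There the derivative bound in Step 3 is sharp, and one must check that the enlarged rectangle keeps $|\sin(cW)|\gtrsim|z|^{1-\nu}$ with a uniform constant. One must also check that the logarithmic blow-up of $F_c$ from Lemma \ref{lem:DLFc} contributes only $\log\frac1{|z|}$ and no worse.
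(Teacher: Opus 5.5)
Your proof is correct and follows the paper's argument. You bound $R(z,\cdot)$ by $O(|z|\log\frac{1}{|z|})$ to show that $\chi_z^{-1}$ moves points by that amount. You then multiply by the $O(|z|^{\delta})$ bound on $\partial_W(\psi_z^\iota)^{-1}\sim z/\sin^2(cW)$. Two points the paper passes over quickly are handled more rigorously in your version: the contraction-mapping inversion of $\chi_z$, and the use of a slightly enlarged rectangle $\mathcal{R}'_z$ so the derivative bound holds along the whole segment $[W,V]$ rather than only at $W$.
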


\begin{proof}
First, observe that for $W\in \mathcal{R}_z$ we have $R(z,W)=O(|z|\log\frac{1}{|z|})$, which implies that $\chi_z^{-1}(W)=W+O(|z|\log\frac{1}{|z|})$. Then 
$$(\chi_z\circ \psi_{z}^{\iota})^{-1}(W)=(\psi_{z}^{\iota})^{-1}(W+O(R(z,W)))=(\psi_{z}^{\iota})^{-1}(W)+O\left( \frac{\partial (\psi_{z}^{\iota})^{-1}}{\partial W}(W)R(z,W)\right).$$
Now observe that for $W\in \mathcal{R}_z$ 
$$\frac{\partial(\psi_{z}^{\iota})^{-1}}{\partial W}(W)=O\left(\frac{z}{\sin^2(cW)}\right)=O\left(|z|^{2\nu-1}\right),$$
and therefore $O\left( \frac{\partial (\psi_{z}^{\iota})^{-1}}{\partial W}(W)R(z,W)\right)=O( |z|^{1+\delta}\log \frac{1}{|z|})$

\end{proof}

\section{Proof of the main theorem}

Fix $\frac{1}{2}<\nu<\frac{2}{3}$, define $k_n:=\lfloor n^\nu \rfloor$
 and denote $(z_j, w_j):=P^j(z_0,w_0)$.
%Let $K_n\times K\subset \cP^+_{\frac{r}{n}}\times \cP^-_r$ a compact for some $n>n_0$, where the choice of $n_0$ depends on the choice of the compact set $K\subset\D(-r,r)$. Finally for $(z_0,w_0)\in K_n\times K$ we let $(z_j, w_j):=P^j(z_0,w_0)$.

\medskip

{\bf Choice of constants:} We choose constants $\nu \in (\frac{1}{2},\frac{2}{3})$ and $r>0$ such that:
\begin{enumerate}[label=(\roman*)]
\item Proposition \ref{prop:phianalytic} applies. 
	\item The inverse $q_0^{-1}$ is well defined on $\mathbb{D}(0,2r)$. 
	\item $q_0(\D(-r,r))\subset\D(-r,r)$ and $q_0^{-1}(\D(r,r))\subset\D(r,r)$
%\item $|\sum_{j=0}^ng(q^j_0(w_0))|<\ln(2n)$ for all $n\geq1$ and all $w_0\in \D(-r,r)$
	\item $p_0(\D(r,r))\subset\D(r,r)$ and $p_0^{-1}(\D(-r,r))\subset\D(-r,r)$

\end{enumerate}

\medskip

{\bf Choice of compact set:} For the rest of this section, we fix a compact subset $K'\times K \subset \mathcal{B}_{p_0}\times \mathcal{B}_{q_0}$.
\medskip

{\bf Choice of integer:} Choose $n_0 \in \N$ large enough so that for every $n>n_0$ we have $p_0^n(K')\subset \D(r,r)$ and $q_0^{k_n}(K) \subset \D(-r,r)$, where $k_n:=\lfloor n^\nu \rfloor$.
\medskip

{\bf Notation:} Given a point $(z_0,w_0)\in K'\times K$, we write 
$$\eps_j:=\proj_1 P^j\left(p_0^n(z_0)+\frac{h(z_0,w_0)}{n^2}+o\left(\frac{1}{n^2}\right),w_0\right)$$
and $w_j:=q_{\eps_{j-1}} \circ q_{\eps_{j-2}} \circ \ldots \circ q_{\eps_0}(w_0)$.

\begin{rem}Unless otherwise stated, all the constants appearing in estimates depend only on the compact $K'\times K$, but not on the point $(z_0,w_0)$ nor on the integer $n$.
\end{rem}

\subsection{Entering the eggbeater} We begin with a lemma showing that the first $k_n$ elements of the non-autonomous orbit $\{w_j\}$ are suitably close to the autonomous orbit $\{q_0^j(w_0)\}$, provided that $n$ is sufficiently large.

\begin{lem}\label{lem:init}
	We have
	$$\eps_{k_n}=\left(n+k_n+\phi_{p_0}^\iota(z_{0}) +(1-a_{3,0})\ln{n}-h(z_0,w_0)-\sum_{j=0}^{k_n-1} g(q^j_0(w_0))+o(1)\right)^{-1}$$	and for $0\leq j\leq k_n$
	$$\phi_{q_0}^\iota(w_j)=\phi_{q_0}^\iota(w_0)+j+o(1).$$
	
	In particular we have $w_{k_n}=-\frac{1}{k_n}+O(\frac{\ln n}{k_n^2}) \in \D(-r,r)$.
\end{lem}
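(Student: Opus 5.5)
We prove both statements together by a bootstrap (induction on $j\le k_n$), tracking the first coordinate through a Fatou-type coordinate adapted to the non-autonomous perturbation and the second coordinate through the incoming Fatou coordinate $\phi^\iota_{q_0}$.

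\emph{First coordinate.} Write $\eps_0=p_0^n(z_0)+\frac{h}{n^2}+o(n^{-2})$. The classical asymptotics of the incoming Fatou coordinate of $p_0(z)=z-z^2+a_{3,0}z^3+\dots$ give
$$\phi^\iota_{p_0}(z)=\frac1z+(1-a_{3,0})\ln z+o(1)$$
after a suitable normalization. Since $p_0^n(z_0)=\frac1n+O(\frac{\ln n}{n^2})$, a perturbation of size $h/n^2$ changes $1/z$ by $-h+o(1)$. Hence
$$\frac{1}{\eps_0}=n+\phi^\iota_{p_0}(z_0)+(1-a_{3,0})\ln n-h(z_0,w_0)+o(1).$$
Next, $\eps_{j+1}=p(\eps_j,w_j)=p_0(\eps_j)+g(w_j)\eps_j^2+O(\eps_j^3w_j)$. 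Inverting,
$$\frac1{\eps_{j+1}}=\frac1{\eps_j}+1-g(w_j)+(1-a_{3,0})\eps_j+O(\eps_j w_j,\eps_j^2).$$
Summing over $0\le j<k_n$:
\begin{itemize}
\item Since $\eps_j\asymp 1/n$ and $k_n\sim n^\nu$ with $\nu<1$, the terms $(1-a_{3,0})\eps_j$ sum to $O(k_n/n)=o(1)$, and likewise the error terms $O(\eps_jw_j)$ are $O(k_n/n)=o(1)$, provided $w_j$ stays bounded.
\item The sum $\sum_j g(w_j)$ must be replaced by $\sum_j g(q_0^j(w_0))$. This uses the second claim: $w_j$ and $q_0^j(w_0)$ have Fatou coordinates differing by $o(1)$, and for large $j$ both are $\approx -1/j$. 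Hence $|g(w_j)-g(q_0^j(w_0))|=o(1)\cdot O(1/j^2)$ plus a small contribution, and the total error is $o(1)$.
\end{itemize}
This gives the formula for $\eps_{k_n}$.

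\emph{Second coordinate.} We have $w_{j+1}=q_{\eps_j}(w_j)=q_0(w_j)+O(\eps_j^2,\eps_j^2 w_j)$, and the $bz^2$ term is exactly of order $\eps_j^2\sim n^{-2}$. For $w$ in the compact $K$, or in its forward images inside $\D(-r,r)$, the derivative of $\phi^\iota_{q_0}$ is bounded by $O(1+|\phi^\iota_{q_0}(w)|^2)$, since $\phi'\sim 1/w^2$. Along the orbit $|\phi|\lesssim j\le k_n$, so
$$\phi^\iota_{q_0}(w_{j+1})=\phi^\iota_{q_0}(w_j)+1+O(j^2/n^2).$$
Summing gives an error $O(k_n^3/n^2)=O(n^{3\nu-2})=o(1)$, because $\nu<\frac23$. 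This is exactly where the choice $\nu<2/3$ enters, and it yields $\phi^\iota_{q_0}(w_j)=\phi^\iota_{q_0}(w_0)+j+o(1)$. The induction is closed by noting that this keeps $w_j$ in the region where $\phi^\iota_{q_0}$ is a univalent coordinate, namely in $\bcal_{q_0}$ near the orbit, and in $\D(-r,r)$ for large $j$. Then $w_{k_n}=\psi^\iota(\phi^\iota_{q_0}(w_0)+k_n+o(1))=-\frac1{k_n}+O(\frac{\ln k_n}{k_n^2})$ by the standard asymptotics of the inverse Fatou coordinate, and $\ln k_n\lesssim\ln n$.

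\emph{Main obstacle.} The delicate point is the uniform derivative bound on $\phi^\iota_{q_0}$ along the perturbed orbit, needed for the second claim. Before $w_j$ enters $\D(-r,r)$ we use compactness: the first $N$ iterates, with $N$ fixed, are a small perturbation. Afterwards we use $|\phi'(w)|\asymp|w|^{-2}\asymp j^2$. These two regimes must be combined inside the bootstrap so that the constants do not depend on $n$.
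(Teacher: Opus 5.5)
Your proposal is correct and is essentially the paper's proof. It runs an induction on $j\le k_n$ using the one-step increments $1-g(w_j)+O(\eps_j w_j)$ for the first coordinate and $1+O(\eps_j^2/w_j^2,\eps_j w_j)$ for $\phi^\iota_{q_0}$. The accumulated error $O(k_n^3/n^2)=o(1)$ is handled by $\nu<\frac23$, and $\sum g(w_j)$ is replaced by $\sum g(q_0^j(w_0))$ via $w_j-q_0^j(w_0)=O(j/n^2,\ln n/(nj^2))$. Tracking $1/\eps_j$ instead of $\phi^\iota_{p_0}(\eps_j)$ is only cosmetic over $k_n=o(n)$ steps.

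There is one minor slip: $q_{\eps}(w)-q_0(w)$ also contains $O(\eps w^3)$ terms from the quartic remainder, which you omitted. These give an extra $O(\eps_j w_j)$ per step in $\phi^\iota_{q_0}$, but they sum to $O(\ln n/n)=o(1)$, which is exactly the paper's second error term, so your conclusion is unaffected.
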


\begin{proof}
First, observe that for $(\eps_j, w_j)\in \D(r,r) \times \mathcal{B}_{q_0} \subset \mathcal{B}_{p_0}\times \mathcal{B}_{q_0}$ we have

\begin{equation*}
	\begin{array}{l}
		\phi_{p_0}^\iota(\eps_{j+1})=\phi_{p_0}^\iota(p_0(\eps_{j})+g(w_j)\eps_j^2+O(\eps^3_jw_j))=\phi_{p_0}^\iota(\eps_{j})+1-g(w_j)+O(\eps_jw_j) \\
  \phi_{q_0}^\iota(w_{j+1})=\phi_{q_0}^\iota(q_0(w_{j})+b\eps^2_j+O(\eps_j^3,\eps_jw_j^3,\eps_j^2w_j^2))=\phi_{q_0}^\iota(w_{j})+1+O(\frac{\eps_j^2}{w_j^2},\eps_jw_j)
	\end{array}
\end{equation*}

We prove the lemma by induction $j$. Let $(\eps_0, w_0)\in \D(r,r) \times K $, $0<\ell \leq k_n$ and $n$ sufficiently large. Assume that $(\eps_j, w_j)\in \D(r,r) \times \mathcal{B}_{q_0} $, $\eps_j=\frac{1}{n}+O(\frac{\ln{n}}{n^2})$ and $\phi_{q_0}^\iota(w_{j})=\phi_{q_0}^\iota(w_{0})+j+o(1)$ for all $0\leq j< \ell $, which is trivial when $\ell=1$. Then 

\begin{equation*}
	\begin{array}{l}
		\phi_{p_0}^\iota(\eps_{\ell})=\phi_{p_0}^\iota(z_{0})+n+\ell-h(z_0,w_0)-\sum_{j=0}^{\ell-1}g(w_j)+O(\frac{\ln{n}}{n}) \\
  \phi_{q_0}^\iota(w_{\ell})=\phi_{q_0}^\iota(w_{0})+\ell +O(\frac{\ell^3}{n^2},\frac{\ln{n}}{n})
	\end{array}.
\end{equation*}

Furthermore since $w_j=q_0^j(w_0)+O(\frac{j}{n^2},\frac{\ln{n}}{n j^2})$ and $\ell\leq k_n$ we get 

\begin{equation*}
	\begin{array}{l}
		\phi_{p_0}^\iota(\eps_{\ell})=\phi_{p_0}^\iota(z_{0})+n+\ell-h(z_0,w_0)-\sum_{j=0}^{\ell-1} g(q^j_0(w_0))+o(1) \\
  \phi_{q_0}^\iota(w_{\ell})=\phi_{q_0}^\iota(w_{0})+\ell +o(1)
	\end{array}.
\end{equation*}

For all sufficiently large $n$ and sufficiently small $r$, independently of $\ell$, we also get $(\eps_\ell, w_\ell)\in \D(r,r) \times \mathcal{B}_{q_0} $, and the conclusion follows from $\ell=k_n$.

\end{proof}

\begin{defi}[Approximate Fatou coordinate] Let $\Phi_\eps:= \chi_{\eps} \circ \psi_{\eps}^\iota$.
\end{defi}

The proofs of the following two lemmas follow from the same computations as in \cite[Lemma 5.3 \& Lemma 5.6]{ABT}.

\begin{lem}\label{prop:error1}
For all sufficiently large $n$	we have
	\begin{align*}
		\psi_{\eps_{k_n}}^{\iota}(w_{k_n})
		&=-\frac{\eps_{k_n}}{w_{k_n}}+{\frac{\eps_{k_n}^2}{2w_{k_n}^2}}+o\left(\eps_{k_n}\right).
	\end{align*}
\end{lem}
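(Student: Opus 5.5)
The plan is to expand $\psi^\iota_\eps(w)=\frac{1}{2ic}\log\bigl(\frac{\zeta^+(\eps)-w}{w-\zeta^-(\eps)}\bigr)+\frac{\pi}{2c}$ directly in the regime $|\eps|\ll|w|$, with $\eps=\eps_{k_n}$ and $w=w_{k_n}$. The estimate \eqref{eq:asymppsi} (from the proof of Proposition \ref{prop:estimateA}) cannot be quoted as stated: its $O(z^3)$ term was only shown to be uniform on compacts of $V_r\setminus\{w=0\}$, whereas now $w\to 0$ as well. So the computation has to be redone with explicit control of the remainder in terms of the ratio $\eps/w$.

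By Lemma \ref{lem:init}, $\eps_{k_n}=\frac1n+O(\frac{\ln n}{n^2})$ and $w_{k_n}=-\frac{1}{k_n}+O(\frac{\ln n}{k_n^2})$ with $k_n=\lfloor n^\nu\rfloor$. Hence $t:=\eps_{k_n}/w_{k_n}=O(n^{\nu-1})\to0$, and $w_{k_n}$ lies in $\D(-r,r)$, away from the cut $L_{\eps_{k_n}}$, which points roughly in the direction of $\eps_{k_n}$ (positive real direction).

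The key steps are as follows.

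First, factor out $w$:
$$\frac{\zeta^+(\eps)-w}{w-\zeta^-(\eps)}=-\frac{1-\zeta^+(\eps)/w}{1-\zeta^-(\eps)/w}.$$
Since $\zeta^\pm(\eps)=c^\pm\eps+O(\eps^2)$, we have $\zeta^\pm(\eps)/w=c^\pm t+O(\eps t)$. One checks that the branch chosen gives $\frac{1}{2ic}\log(-1)+\frac{\pi}{2c}\equiv0$. This is consistent with the constant $\pm\frac{\pi}{2c}$ in the definition of $\psi^{\iota/o}$, and the correct branch is the one for which $\psi^\iota$ is small near the incoming side.

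Second, expand the two logarithms $\log(1-\zeta^\pm/w)$ to second order:
$$\frac{1}{2ic}\Bigl[\frac{\zeta^-(\eps)-\zeta^+(\eps)}{w}+\frac{(\zeta^-(\eps))^2-(\zeta^+(\eps))^2}{2w^2}\Bigr]+O(t^3).$$
Using $c^+-c^-=2ic$, $(c^+)^2-(c^-)^2=(c^+-c^-)(c^++c^-)=-2ic$, and the $z^2$ coefficient of $\zeta^\pm$ (which contributes $-\Theta\eps^2/w$), this gives
$$-\frac{\eps}{w}+\frac{\eps^2}{2w^2}+O\Bigl(\frac{\eps^2}{|w|}\Bigr)+O(t^3)+O\Bigl(\frac{\eps^3}{w^2}\Bigr).$$

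Third, check that all remainders are $o(\eps)$:
\begin{itemize}
\item $\eps^2/w=\eps\cdot t=o(\eps)$, and $\eps^3/w^2=\eps\, t^2=o(\eps)$, both trivially.
\item The critical one is $t^3=\eps\cdot\eps^2/w^3$. Here $\eps^2/|w|^3\asymp n^{3\nu-2}$, which tends to $0$ precisely because $\nu<\frac23$.
\end{itemize}
The same constraint is what forces $\nu<\frac23$ in Definition \ref{defi:R}.

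The main obstacle is this last bookkeeping step. The remainder of the Taylor expansion of the logarithm must be controlled uniformly in $t$, which is fine since $|t|\to0$ uniformly on $K'\times K$. One must also check that the higher-order terms of $\zeta^\pm$, namely $O(\eps^3)$ inside the ratio, only produce $O(\eps^3/w)$ contributions. Finally, the $O(\frac{\ln n}{k_n^2})$ error in $w_{k_n}$ does not matter, since the statement is expressed in terms of the actual $w_{k_n}$ and $\eps_{k_n}$ themselves.
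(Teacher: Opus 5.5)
Your proposal is correct and follows essentially the same route as the computation the paper defers to in \cite[Lemma 5.3]{ABT}: write $\psi^\iota_\eps(w)=\frac{1}{2ic}\bigl[\log(1-\zeta^+(\eps)/w)-\log(1-\zeta^-(\eps)/w)\bigr]$, expand in $t=\eps/w$, and use $\nu<\frac23$ so that the $O(t^3)$ remainder is $o(\eps)$. One correction to your branch justification: the cut $L_\eps$ lies on the nearly vertical line through $\zeta^\pm(\eps)\approx(-\frac12\pm ic)\eps$, not along the direction of $\eps$, and the sign is actually fixed because $\mathrm{Re}(\eps_{k_n}/w_{k_n})\approx -k_n/n<0$ (so $w_{k_n}$ lies to the left of that line); this makes the principal logarithm of $-\frac{1-\zeta^+/w}{1-\zeta^-/w}$ equal to $\log\frac{1-\zeta^+/w}{1-\zeta^-/w}-i\pi$, which is what cancels the constant $\frac{\pi}{2c}$, whereas the principal value $\log(-1)=i\pi$ would give $\frac{\pi}{c}$ instead.
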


\begin{lem}[Comparison with incoming Fatou coordinates]\label{lem:comp}
	We have 
	\begin{equation*}
		\frac{1}{\eps_{k_n}}\Phi_{\eps_{k_n}}(w_{k_n})=\phi^\iota_{{q_0}}(w_{k_n}) +{\frac{k_n^2}{2n}}+ (1- b_{0,3}) \ln n + E^\iota + o(1)
		%		( b_{0,3}-1) (\theta_1(w_0)+\ln n) + 
		%		( b_{0,3}-1) \left(\ln c - \frac{1}{c}\int_0^{\frac{\pi}{2c}} e^{-u} \ln \sin(cu) du  \right) + o(1)
	\end{equation*}
	where $E^\iota:=( b_{0,3}-1) \left(\ln c - \int_0^{\frac{\pi}{2c}} e^{-u} \ln \sin(cu) du  \right)$.
\end{lem}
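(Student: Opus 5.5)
We prove Lemma \ref{lem:comp} by expanding the left-hand side directly, using Lemma \ref{prop:error1} for the $\psi^\iota$ part and Lemma \ref{lem:DLFc} for the correction term $R$ inside $\chi_\eps$. Write $\eps:=\eps_{k_n}$ and $w:=w_{k_n}$. By definition,
$$\frac{1}{\eps}\Phi_\eps(w)=\frac{1}{\eps}\psi^\iota_\eps(w)+(b_{0,3}-1)\,c\,e^{W}F_c(W),\qquad W:=\psi^\iota_\eps(w).$$

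\textbf{First term.} Lemma \ref{prop:error1} gives
$$\frac{1}{\eps}\psi^\iota_\eps(w)=-\frac1w+\frac{\eps}{2w^2}+o(1).$$
By Lemma \ref{lem:init}, $w=-\frac{1}{k_n}+O(\frac{\ln n}{k_n^2})$, and $\eps=\frac1n+O(\frac{\ln n}{n^2})$. Since $k_n\sim n^\nu$ with $\nu<\frac23$, we have $\eps/w^2=\frac{k_n^2}{n}+o(1)$, and the errors $O(\frac{\ln n}{n}k_n\ln n)$ are $o(1)$.

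Next we compare $-1/w$ with $\phi^\iota_{q_0}(w)$. The expansion of the incoming Fatou coordinate of $q_0(w)=w+w^2+b_{0,3}w^3+\dots$ is
$$\phi^\iota_{q_0}(w)=-\frac1w+(1-b_{0,3})\ln(-w)+C+o(1)$$
for a normalization constant $C$, consistent with the paper's normalization $q_0^k(w)\approx -1/(\phi^\iota_{q_0}+k)$. I would fix conventions so that the normalization constant is absorbed. Then
$$-\frac1w=\phi^\iota_{q_0}(w)-(1-b_{0,3})\ln(1/k_n)+o(1)=\phi^\iota_{q_0}(w)+(1-b_{0,3})\ln k_n+o(1).$$

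\textbf{Second term.} Here $W=-\eps/w+O(\eps^2/w^2)\approx k_n/n\to 0$, and $W\in\mathcal R_0$ in the sense of Lemma \ref{lem:DLFc}, approaching $0$. So $e^W=1+o(1)$, and
$$cF_c(W)=\log(cW)-\int_0^{\pi/2c}e^{-u}\log\sin(cu)\,du+o(1).$$
Since $W\,\log W\to0$, the factor $e^W$ introduces no extra error. Moreover
$$\log(cW)=\ln c+\ln k_n-\ln n+o(1),$$
using $-\eps/w=\frac{k_n}{n}(1+o(1))$; the correction $\eps/(2w^2)$ relative to $\eps/w$ is of size $k_n/n\to0$, so its effect on the logarithm is negligible. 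Multiplying by $(b_{0,3}-1)$, the second term equals
$$(b_{0,3}-1)\ln k_n+(1-b_{0,3})\ln n+E^\iota+o(1).$$

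\textbf{Summing.} In the sum, the two $\ln k_n$ contributions cancel exactly. What remains is $\phi^\iota_{q_0}(w)+\frac{k_n^2}{2n}+(1-b_{0,3})\ln n+E^\iota+o(1)$, as claimed.

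\textbf{Main obstacle.} The delicate point is bookkeeping of the error terms. We must check that $\eps/w^2$ equals $k_n^2/n$ up to $o(1)$; this uses the error sizes in Lemma \ref{lem:init} and $2\nu<\frac43$. We must also check that the branch and normalization of $\log$ and of $\phi^\iota_{q_0}$ match, so that the constant is exactly $E^\iota$. I would verify the latter by reproducing the computation of \cite[Lemma 5.6]{ABT}, which the paper cites for this step.
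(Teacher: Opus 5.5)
Your proposal is correct and follows essentially the same route as the paper, which defers to the computation of \cite[Lemma 5.6]{ABT}. That route is: split $\frac{1}{\eps}\Phi_\eps(w)=\frac{1}{\eps}\psi^\iota_\eps(w)+(b_{0,3}-1)ce^{W}F_c(W)$ with $W=\psi^\iota_\eps(w)$; expand the first piece via Lemma \ref{prop:error1} and the normalization $\phi^\iota_{q_0}(w)=-\frac1w+(1-b_{0,3})\ln(-w)+o(1)$ (with no additive constant, which the statement implicitly presupposes); expand the second via Lemma \ref{lem:DLFc}; the $\ln k_n$ terms then cancel.

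One harmless slip: Lemma \ref{lem:init} gives $\eps_{k_n}=(n+k_n+O(\ln n))^{-1}$, so $\eps_{k_n}-\frac1n$ is of order $k_n/n^2$ rather than $\ln n/n^2$, which adds an error $O(k_n^3/n^2)$ to $\eps/w^2$; this is still $o(1)$ precisely because $\nu<\frac23$, as you anticipated.
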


\begin{rem}\label{rem:initial} Note that 
$$w_{k_n}=-|\eps_{k_n}|^{\nu}+o(\eps_{k_n}^{\nu}),$$
Therefore, given a compact set $K\subset\mathcal{B}_{q_0}$, Lemma \ref{lem:init} and Lemma \ref{prop:error1} imply that $\psi^{\iota}_{\eps_{k_n}}(w_{k_n})\in\mathcal{R}_{\eps_{k_n}}$
for all $(z_0,w_0)\in K'\times K$ and all sufficiently large $n$. Moreover by Lemma \ref{lem:DLFc} $ \chi_{\eps_{k_n}}(W)=W+O(|\eps_{k_n}|\log\frac{1}{|\eps_{k_n}|})$ the same conclusion holds for $\Phi_{\eps_{k_n}}(w_{k_n})\in\mathcal{R}_{\eps_{k_n}}$.
\end{rem}

\subsection{Passing through the eggbeater}

Recall the following notations:
$$ 
c:=\frac{\sqrt{4b-1}}{2},\, \alpha_0:= e^{\pi/c}, \, \beta_0:= (b_{0,3}-a_{3,0}+a_{2,1} )(\alpha_0-1).$$
where $b > \frac{1}{4}$.

\begin{defi}\label{defi:mn} Let 
	$M_n:= \lfloor(\alpha_0-1)n+ \beta_0\ln n \rfloor$
	where $\lfloor \cdot\rfloor$ is the floor function. Let $\ell_n:= \lfloor e^{\frac{\pi}{c}}k_n\rfloor$ and $\rho_n:=\{(\alpha_0-1)n+ \beta_0\ln n \}$, where $\{\cdot\}$ denotes the fractional part. Finally, define $W_{j}:=\Phi_{\eps_{j}}(w_{j})$.  
	
\end{defi}

\begin{rem}\label{zwk}
The following lemma is where the assumption that the first coordinate of the map \eqref{map1} contains no terms of the form $zw^k$ ($k\geq 1$) is needed. If such terms were present, then, in order for \eqref{Wbound} to hold, the time $M_n$ would have to depend on the initial point $w$. This would destroy the uniform renormalization scheme used below.
\end{rem}

\begin{lem}\label{prop:eggb}
	For $k_n \leq i \leq M_n - \ell_n$, we have
  $$
    W_i\in\mathcal R_{\epsilon_i},
    \qquad
    \chi_{\epsilon_i}^{-1}(W_i)
    =
    \psi_{\epsilon_i}^{\iota}(w_i)
    \in\mathcal R_{\epsilon_i}.
$$
  % $W_i \in \mathcal{R}_{\eps_i}$ 
Moreover,
	$$W_{i} = W_{k_n} + \sum_{j=k_n}^{i-1}\eps_{j}+\tilde{A}(\eps_{j},w_j) $$
	and
$$\eps_{i}=\frac{1}{n+i+\phi_{p_0}^\iota(z_{0})}+O(\frac{1}{n^{1+\nu}}) $$
	%$$z_{i}=\frac{1}{i +(1-a_{3,0}+a_{2,1})\ln i +\phi_{p_0}^\iota(z_{0})+ \theta(w_0) -a_{2,1}\sum_{j=k_n}^{i-1}w_j+o(1)} $$
\end{lem}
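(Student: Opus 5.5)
The argument is a bootstrap induction on $i$, from $i=k_n$ up to $M_n-\ell_n$. The inductive hypothesis $H_i$ states that for all $k_n\le j\le i$ we have $W_j\in\mathcal R_{\eps_j}$, $\chi_{\eps_j}^{-1}(W_j)=\psi^\iota_{\eps_j}(w_j)\in\mathcal R_{\eps_j}$, and
$$\eps_j=\frac{1}{n+j+\phi^\iota_{p_0}(z_0)}+O\left(\frac{1}{n^{1+\nu}}\right)$$
with a constant $C$ that does not depend on $n$ or $j$.

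\textbf{Base case.} The case $i=k_n$ comes from Lemma \ref{lem:init} together with Remark \ref{rem:initial}. The first gives $\eps_{k_n}^{-1}=n+k_n+\phi_{p_0}^\iota(z_0)+O(\ln n)$; the terms $\sum g(q_0^j(w_0))$ are $O(\ln k_n)$, so the error is $O(\ln n/n^2)=O(n^{-1-\nu})$. The second gives membership in $\mathcal R_{\eps_{k_n}}$.

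\textbf{The telescoping identity.} Suppose $(\eps_j,w_j)$ satisfies $\psi^\iota_{\eps_j}(w_j)\in\mathcal R_{\eps_j}$. By definition of $\tilde A$ we have $W_{j+1}=W_j+\eps_j+\tilde A(\eps_j,w_j)$, and this is an exact identity, since $\eps_{j+1}=p(\eps_j,w_j)$ and $w_{j+1}=q_{\eps_j}(w_j)$. Summing over $j$ gives the displayed formula for $W_i$. Proposition \ref{prop:almosttransv2} then yields
$$W_{i+1}=W_{k_n}+\sum_{j=k_n}^{i}\bigl(\eps_j+\Lambda\eps_j^2+O(|\eps_j|^{2+\delta})\bigr).$$
With $\eps_j\approx (n+j)^{-1}$, the error terms sum to $O(n^{-1})$. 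The main term is $\sum\eps_j\approx\ln\frac{n+i+1}{n+k_n}$. Since $i\le M_n-\ell_n$, the index $n+i$ is at most about $\alpha_0(n-k_n)$, so $\re W_{i+1}$ stays below $\frac{\pi}{c}$ minus a margin of order $k_n/n\approx n^{\nu-1}$. This margin dominates $|\eps|^{1-\nu}/10\sim n^{\nu-1}/10$, and the constant $10$ together with the choice of $\ell_n=\lfloor\alpha_0 k_n\rfloor$ should make this work. At the lower end, $\re W_{k_n}\approx\eps_{k_n}k_n\approx n^{\nu-1}$ also exceeds $|\eps|^{1-\nu}/10$. Monotonicity of the real part (the increments are $\approx\eps_j>0$ in real part) keeps $W$ inside the strip. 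The imaginary part stays bounded by $\frac12$ because $\im W_{k_n}=o(1)$ by Lemma \ref{lem:comp}, and the imaginary parts of the increments sum to $o(1)$. Finally, Lemma \ref{lem:invcomp} (equivalently the bound $\chi_\eps^{-1}(W)=W+O(|\eps|\log\frac1{|\eps|})$) transfers membership from $W_{i+1}$ to $\psi^\iota_{\eps_{i+1}}(w_{i+1})$, at a cost of an $O(n^{-1}\ln n)$ shift, which is smaller than the margin.

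\textbf{The first coordinate.} This is where $p(0,w)=0$ and $\partial_zp(0,w)=1$ are used. We have $\eps_{j+1}=p_0(\eps_j)+g(w_j)\eps_j^2+O(\eps_j^3w_j)$, hence
$$\phi^\iota_{p_0}(\eps_{j+1})=\phi^\iota_{p_0}(\eps_j)+1-g(w_j)+O(\eps_jw_j).$$
While in the eggbeater, Lemma \ref{lem:invphi} gives $|w_j|=O(|\eps_j|^\nu)$, so $g(w_j)=O(n^{-\nu})$. Summing over the roughly $n$ steps then gives only $O(n^{1-\nu})$, which is too crude. I expect this to be the main obstacle, and I would handle it by exploiting oscillation: write $w_j\approx-c\eps_j\cot(cW_j)$, and note that $\eps_j\approx W_{j+1}-W_j$. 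Then
$$\sum_j a_{2,1}w_j\approx-a_{2,1}\int c\cot(cW)\,dW=-a_{2,1}\bigl[\log\sin(cW)\bigr],$$
which is $O(\ln n)$, because $\sin(cW)$ stays of order at least $n^{\nu-1}$ at both ends. The higher-order terms in $g$ and $\eps_jw_j$ sum to $O(n\cdot n^{-2\nu})+O(n\cdot n^{-1-\nu})=o(1)$, since $\nu>\frac12$. Hence $\eps_{i+1}^{-1}=n+i+1+\phi^\iota_{p_0}(z_0)+O(\ln n)$, which implies the claimed $O(n^{-1-\nu})$ bound because $\ln n/n^2\ll n^{-1-\nu}$. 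The constant stays uniform, and this closes the induction.
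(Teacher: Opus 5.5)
Your proposal is correct and follows essentially the same route as the paper. The shared steps are: induction on $i$; the exact telescoping identity combined with the almost-translation property, which keeps $W_i\approx\ln(1+i/n)$ inside $\mathcal R_{\eps_i}$ up to $i=M_n-\ell_n$; the transfer through $\chi_{\eps_i}^{-1}$; and control of the first coordinate by summing $g(w_j)$ with $w_j\approx-c\eps_j\cot(cW_j)$, which integrates to a logarithm. The paper does that last sum via Euler--Maclaurin with $W_j\approx\ln(1+j/n)$, which yields the explicit asymptotics it reuses in Lemma \ref{lem:sumeps}.

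One small correction: the crude bound $\sum_j|g(w_j)|=O(n^{1-\nu})$ is not actually too weak for the lemma as stated. It already gives exactly the $O(n^{-1-\nu})$ error for $\eps_i$, and it shifts $\sum_j\eps_j$ by only $O(n^{-\nu})=o(n^{\nu-1})$ because $\nu>\frac12$. The oscillation argument is only needed later, for the $o(1/n)$ precision in $W_{M_n-\ell_n}$.
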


\begin{proof}
	We prove the statement by induction $i$.
	\begin{itemize}
		\item 	Initialization: this follows from the fact that $W_{k_n} = \frac{k_n}{n}+O(\frac{k_n^2}{n^2})$ 
		(see Remark \ref{rem:initial}).
		\item Inductive step: Let $k_n \leq i\leq M_n - \ell_n$ and assume that $W_j\in\mathcal{R}_{\eps_j} $, $\psi^{\iota}_{\eps_j}(w_j)=(\chi_{\eps_j})^{-1}(W_j)\in\mathcal{R}_{\eps_j} $ and $\eps_j=\frac{1}{n+j+\phi_{p_0}^\iota(z_{0})}+o(1/n)$ for all $k_n \leq j<i$. 
    
    We need to prove that also $W_i\in\mathcal{R}_{\eps_i}$, $\psi^{\iota}_{\eps_i}(w_i)=(\chi_{\eps_i})^{-1}(W_i)\in\mathcal{R}_{\eps_i}$ and $\eps_i=\frac{1}{n+i+\phi_{p_0}^\iota(z_{0})}+o(1/n)$.
\medskip

First, recall that by Proposition \ref{prop:almosttransv2} we have
		
		\begin{align*}
			\left|W_{i}- W_{k_n} - \sum_{j=k_n}^{i-1}\eps_{j}\right|&=\left|\sum_{j=k_n}^{i-1}\tilde{A}(\eps_{j},w_j)\right|=O(\frac{1}{n})	
		\end{align*}
		where we have used the fact that $\tilde{A}(\eps_{j},w_j)= O(\eps_j^2)$ whenever $\psi^{\iota}_{\eps_j}(w_j)\in\mathcal{R}_{\eps_j}$ 
    %$\Phi_{\eps_j}(w_j)\in\mathcal{R}_{\eps_j}$ 
    and that $\eps_j=O(\frac{1}{n})$.

 It follows that
		\begin{align*}
			W_{i}&= W_{k_n} + \left(\sum_{j=k_n}^{i-1}z_{j}\right)+O(\frac{1}{n})\\
			&=\frac{k_n}{n}+\left(\sum_{j=k_n}^{i-1}\frac{1}{n+j}\right)+O(\frac{k_n^2}{n^2})\\
			&=\ln\left(\frac{n+i}{n+k_n}\right)+\frac{k_n}{n}+O(\frac{k_n^2}{n^2})\\
			&=\ln(1+\frac{i}{n})+O(\frac{k_n^2}{n^2})\\
		\end{align*}

	Next recall that by Lemma \ref{lem:invcomp} 
  %$$(\psi_{z}^{\iota/o})^{-1}(W)=-c z \cot(cW) - \frac{z}{2}+{O(z^2 \cot(cW),z^2)}$$ 
  $$\Phi_z^{-1}(W)=-c z \cot(cW) - \frac{z}{2}+{O(z^2 \cot(cW),z^2, z^{1+\delta}\log \frac{1}{|z|})}$$
  and 
$$
\phi_{p_0}^\iota(p(z,w))=\phi_{p_0}^\iota(z)+1-g(w)+O(zw).
$$
Observe that under these assumptions we have 
\begin{align*}
\phi_{p_0}^\iota(\eps_i )&=n+i+\phi_{p_0}^\iota(z_{0})-h(z_0,w_0)-\sum_{j=0}^{k_n-1}g(w_j)-\sum_{j=k_n}^{i-1}g(w_j)+o(1)\\
&=n+i+\phi_{p_0}^\iota(z_{0})-h(z_0,w_0)-\sum_{j=0}^{k_n-1}g(q_0^{j}(w_0))-a_{2,1}\sum_{j=k_n}^{i-1}w_j+o(1)
\end{align*}
and 
\begin{equation}\label{eq:rz}
  \sum_{j=k_n}^{i-1}w_j=\sum_{j=k_n}^{i-1}-c \eps_j \cot(cW_j) - \frac{\eps_j}{2}+{O(\eps_j^2 \cot(cW_j),\eps_j^2, \eps_j^{1+\delta}\log \frac{1}{\eps_j})},
\end{equation}

where
$$W_j=W_{k_n}+\sum_{s=k_n}^{j-1}\eps_{s} +O(\frac{1}{n})=\ln(1+\frac{j}{n})+O(k_n^2/n^2).$$

Using the Euler--Maclaurin formula and the fact that $|w_j|<|\eps_j|^\nu$ for all $k_n \leq j<i$, we obtain 

$$\sum_{j=k_n}^{i-1}w_j=-\ln(\frac{n}{k_n c} \sin(c\ln(1+\frac{i}{n}))) - \frac{1}{2}\ln(1+\frac{i}{n})+o(1),$$

%Furthermore since $i\leq M_n-\ell_n$ we can deduce that $z_i=\frac{1}{n+i+\phi_{p_0}^\iota(z_{0})}+o(1/n)$.

		Now observe that for $k_n\leq i\leq M_n-\ell_n$ 
$$
\ln(1+\frac{i}{n})\leq \frac{\pi}{c}-e^{-\frac{\pi}{c}}\frac{\ell_n}{n} =\frac{\pi}{c}-\frac{k_n}{n}+O(\frac{1}{n}),
$$
 and therefore $\eps_i=\frac{1}{n+i+\phi_{p_0}^\iota(z_{0})}+o(1/n)$ and

\begin{equation}\label{Wbound}
|\eps_{k_n}|^{1-\nu}+o(\eps_{k_n}^{1-\nu}) = \re(W_{k_n})<\re(W_{i})\leq \frac{\pi}{c}-\frac{k_n}{n}+O(\frac{k_n^2}{n^2})=\frac{\pi}{c}-|\eps_i|^{1-\nu}+o(\eps_i^{1-\nu})
\end{equation}
		and $ \im(W_{i}) = O(\frac{k_n^2}{n^2})$. All bounds in the $O(\cdot)$ terms above can be chosen to be independent of $i$. Therefore, there exists $N>0$ (independent of $i$), such that for every $n>N$ we will have $W_i\in\mathcal{R}_{\eps_i}$, provided that $W_j\in\mathcal{R}_{\eps_j}$ for all $k_n \leq j< i$. Finally, observe that for $W_i$ satisfying the condition \eqref{eq:rz} we also have 
    $$\psi^{\iota}_{\eps_i}(w_i)=\chi_{\eps_i}^{-1}(W_i)=W_i+O(R(\eps_i, W_i))=W_i+O(\eps_i\log\eps_i)=W_i+o(\eps_i^{1-\nu})\in\mathcal{R}_{\eps_i}.$$
	\end{itemize}	
\end{proof}

The preceding proof also shows that 
% $$
% \phi_{p_0}^\iota(\eps_{M_n-\ell_n} )=n+M_n-\ell_n+\phi_{p_0}^\iota(z_{0})-\sum_{j=0}^{k_n-1}g(q_0^{j}(w_0))+ a_{2,1}\frac{\pi}{2c}+o(1)$$

$$\eps_{M_n-\ell_n}=\left(n+M_n-\ell_n+(1-a_{3,0})\ln{e^{\frac{\pi}{c}}n}+\phi_{p_0}^\iota(z_{0})-h(z_0,w_0)-\sum_{j=0}^{k_n-1}g(q_0^{j}(w_0))+a_{2,1} \frac{\pi}{2c}+o(1)\right)^{-1}$$
and that $W_{M_n-\ell_n}= \frac{\pi}{c}-|\eps_{M_n-\ell_n}|^{1-\nu}+o(\eps_{M_n-\ell_n}^{1-\nu})$. We will need the following sharper estimate.

\begin{lem}\label{lem:sumeps}	We have 
	%W_{k_n}+\sum_{j=k_n}^{M_n-\ell_n-1} \eps_j + \Lambda \eps_j^2
	\begin{align*}
		W_{M_n-\ell_n}= \frac{\pi}{c}+\frac{G_n}{n}+ o\left(\frac{1}{n}\right)
	\end{align*}
	where
	$$G_n:=-e^{-\frac{\pi}{c}}\ell_n+(1- b_{0,3})e^{-\frac{\pi}{c}}\ln n+{\frac{k_n^2}{2n}}
+\phi^\iota_{{q_0}}(w_{0}) +\tau(z_0,w_0)\left(e^{-\frac{\pi}{c}}-1\right)+C_2-e^{-\frac{\pi}{c}}\rho_n,$$
	$\tau(z_0,w_0):=\phi_{p_0}^\iota(z_{0})-h(z_0,w_0)-\theta(w_0)$ and $C_2$ is an explicit constant. 
\end{lem}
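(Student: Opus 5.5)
The estimate comes from telescoping the almost translation property along the orbit. By Lemma \ref{prop:eggb}, for all indices $k_n\le j< M_n-\ell_n$ we have $\psi^\iota_{\eps_j}(w_j)\in\rcal_{\eps_j}$. Hence Proposition \ref{prop:almosttransv2} applies at every step and gives
\begin{equation*}
W_{M_n-\ell_n}=W_{k_n}+\sum_{j=k_n}^{M_n-\ell_n-1}\left(\eps_j+\Lambda\eps_j^2\right)+O\Big(\sum_{j}|\eps_j|^{2+\delta}\Big).
\end{equation*}
Since $\eps_j\asymp 1/n$ and there are $O(n)$ terms, the error is $O(n^{-1-\delta})=o(1/n)$. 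Likewise $\sum_j\Lambda\eps_j^2=\Lambda(\alpha_0-1)\alpha_0^{-1}/n+o(1/n)$ is an explicit constant over $n$, and it goes into $C_2$.

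For the initial term I would use Lemma \ref{lem:comp}, which gives $W_{k_n}=\eps_{k_n}\big(\phi^\iota_{q_0}(w_{k_n})+\frac{k_n^2}{2n}+(1-b_{0,3})\ln n+E^\iota\big)+o(1/n)$. Lemma \ref{lem:init} gives $\phi^\iota_{q_0}(w_{k_n})=\phi^\iota_{q_0}(w_0)+k_n+o(1)$, and $\eps_{k_n}=1/n+O(\ln n/n^2)$. The $k_n/n$ piece will later cancel against the lower limit of the sum of the $\eps_j$.

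The main step is the sum $\sum\eps_j$, and it needs an expansion of $\eps_j$ to accuracy $o(1/n^2)$ uniformly in $j$, i.e. $1/\eps_j$ to accuracy $o(1)$. I would refine the identity $\phi^\iota_{p_0}(p(z,w))=\phi^\iota_{p_0}(z)+1-g(w)+O(zw)$ used in Lemma \ref{prop:eggb}, together with the expansion of the incoming Fatou coordinate, $\phi^\iota_{p_0}(\eps)=1/\eps+(1-a_{3,0})\ln(1/\eps)+\text{const}+o(1)$. This gives
\begin{equation*}
\frac1{\eps_j}=n+j+(1-a_{3,0})\ln(n+j)+\phi^\iota_{p_0}(z_0)-h(z_0,w_0)-\sum_{s=0}^{k_n-1}g(q_0^s(w_0))-a_{2,1}\sum_{s=k_n}^{j-1}w_s+o(1).
\end{equation*}
Here the partial sums $\sum_{s}w_s$ are handled, as in \eqref{eq:rz}, by Lemma \ref{lem:invcomp} and Euler--Maclaurin. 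These produce $-\ln\big(\tfrac{n}{k_nc}\sin(c\ln(1+j/n))\big)-\tfrac12\ln(1+j/n)+o(1)$.

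Next, using \eqref{eq:1}, I would write $-\sum_{s<k_n}g(q_0^s(w_0))=-\theta(w_0)+a_{2,1}\ln k_n+o(1)$. The $\ln k_n$ cancels against the $\ln k_n$ coming from the $w$-sum, which is how $\tau(z_0,w_0)=\phi^\iota_{p_0}(z_0)-h-\theta(w_0)$ appears. Summing $\eps_j=\frac{1}{n+j}-\frac{\tau+\dots}{(n+j)^2}+o(n^{-2})$ from $k_n$ to $M_n-\ell_n-1$ then gives:
\begin{itemize}
\item a logarithm $\ln\frac{n+M_n-\ell_n}{n+k_n}$. With $n+M_n=\alpha_0 n+\beta_0\ln n-\rho_n$, its expansion produces $\frac{\pi}{c}$ together with the terms $-e^{-\pi/c}\ell_n/n$, $-e^{-\pi/c}\rho_n/n$ and $\beta_0\ln n/(\alpha_0 n)$;
\item a term $\tau\,(e^{-\pi/c}-1)/n$, from $\int_0^{(\alpha_0-1)}\frac{dx}{(1+x)^2}=1-\alpha_0^{-1}$;
\item explicit integrals of $\ln\sin(c\ln(1+x))/(1+x)^2$ and similar functions, which go into $C_2$;
\item a $\ln n$ coefficient collecting $(1-a_{3,0})$, $a_{2,1}$ and $\beta_0/\alpha_0$.
\end{itemize}

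I expect the main obstacle to be the bookkeeping of the $\ln n$ terms. By the choice $\beta_0=(b_{0,3}-a_{3,0}+a_{2,1})(\alpha_0-1)$, these should combine with the $(1-b_{0,3})\ln n$ from the initial term into exactly $(1-b_{0,3})e^{-\pi/c}\ln n$. The other delicate point is to make the Euler--Maclaurin and Riemann-sum approximations uniform near the endpoint $\ln(1+j/n)\to\pi/c$, where $\cot$ blows up. There I would use the bound $\re W_j\le \frac{\pi}{c}-|\eps_j|^{1-\nu}$ from \eqref{Wbound} to show that the boundary contributions are $o(1)$ after multiplication by $\eps_j^2$, i.e. $o(1/n)$ overall.
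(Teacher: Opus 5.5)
Your plan follows the paper's proof essentially line by line. You telescope Proposition \ref{prop:almosttransv2} over the range given by Lemma \ref{prop:eggb} and take the initial value from Lemmas \ref{lem:comp} and \ref{lem:init}. You expand $1/\eps_j$ so that the $w$-sum from \eqref{eq:rz} and $\theta$ combine into $\tau$, and then apply Euler--Maclaurin. Your $\ln n$ bookkeeping does close up to $(1-b_{0,3})e^{-\pi/c}\ln n$. There is a harmless sign slip: $\phi^\iota_{p_0}(\eps)=1/\eps-(1-a_{3,0})\ln(1/\eps)+\dots$, although your formula for $1/\eps_j$ is correct.

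The step that fails is $\eps_{k_n}=1/n+O(\ln n/n^2)$. Lemma \ref{lem:init} gives $1/\eps_{k_n}=n+k_n+O(\ln n)$. Since $\eps_{k_n}$ is multiplied by $k_n$, the difference shows up at order $1/n$:
\begin{align*}
W_{k_n}&=\frac{k_n+\frac{k_n^2}{2n}+\phi^\iota_{q_0}(w_0)+(1-b_{0,3})\ln n+E^\iota+o(1)}{n+k_n+O(\ln n)}\\
&=\frac{k_n}{n}-\frac{k_n^2}{2n^2}+\frac{\phi^\iota_{q_0}(w_0)+(1-b_{0,3})\ln n+E^\iota}{n}+o\Big(\frac1n\Big).
\end{align*}
Here $k_n^2/n^2=n^{2\nu-2}$ is not $o(1/n)$ for $\nu>\frac12$, while $k_n^3/n^3$ and $k_n\ln n/n^2$ are; this is just $W_{k_n}\approx\ln(1+k_n/n)$. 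The second-order terms of $\ln\frac{n+M_n-\ell_n}{n+k_n}$ sum to $\frac{1}{2n^2}(k_n^2-e^{-2\pi/c}\ell_n^2)=o(1/n)$. Carrying this through therefore gives $-\frac{k_n^2}{2n}$ in $G_n$, not $+\frac{k_n^2}{2n}$. Equivalently, $W_{M_n-\ell_n}$ follows $\ln\frac{n+M_n-\ell_n}{n}$, whose quadratic term is $-\frac{\ell_n^2}{2\alpha_0^2n^2}=-\frac{k_n^2}{2n^2}+o(1/n)$.

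So your argument reaches the displayed $G_n$ only by replacing $\eps_{k_n}$ with $1/n$. The paper's proof makes exactly the same replacement when it writes $W_{k_n}=\frac1n\big(\phi^\iota_{q_0}(w_0)+k_n+\frac{k_n^2}{2n}+\dots\big)+o(\frac1n)$. The slip does not reach Theorem \ref{th:maintech}. In the exit computation the paper multiplies $\frac{G_n}{n}$ by $\alpha_0 n$ instead of $1/\eps_{M_n-\ell_n}=\alpha_0 n-\ell_n+O(\ln n)$. The neglected cross term $(-\ell_n)\cdot(-e^{-\pi/c}\ell_n/n)=\alpha_0k_n^2/n+o(1)$ exactly offsets the error $\alpha_0k_n^2/n$ from the wrong sign.

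To make your proof correct:
\begin{itemize}
\item keep $\eps_{k_n}=\frac{1}{n+k_n}+O(\frac{\ln n}{n^2})$;
\item prove the lemma with $-\frac{k_n^2}{2n}$ in $G_n$;
\item use the true value of $1/\eps_{M_n-\ell_n}$ when passing to Lemma \ref{lem:exit}.
\end{itemize}
The resulting value of $\phi^o_{q_0}(w_{M_n-\ell_n})$ is the same.
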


\begin{proof}
	First, recall that by Proposition \ref{prop:almosttransv2} and Lemma \ref{prop:eggb} we have
	
	\begin{align*}
		W_{M_n-\ell_n} &= W_{k_n} + \sum_{j=k_n}^{M_n-\ell_n-1}\eps_{j}+\tilde{A}(\eps_{j},w_j)\\
		&= W_{k_n}+\sum_{j=k_n}^{M_n-\ell_n-1} \left(\eps_j + \Lambda \eps_j^2+O(\eps_j^{2+\delta})\right).
	\end{align*} 
	\color{black}
	Also recall that by Lemma \ref{lem:init} and Lemma \ref{lem:comp} we have:
	$$W_{k_n}=\frac{1}{n}\left(\phi^\iota_{q_0}(w_{0})+k_n+\frac{k_n^2}{2n} +(1- b_{0,3}) \ln n +E^\iota\right) + o\left(\frac{1}{n}\right)$$
	and that 
	$$
	\eps_j=\left(n+j+(1-a_{3,0}+\frac{a_{2,1}}{2})\ln(n+j)+a_{2,1}\ln(\frac{1}{c} \sin(c\ln(1+\frac{j}{n}))) + \frac{a_{2,1}}{2}\ln{n}+\tau(z_0,w_0)+o(1)\right)^{-1}.
	$$
where $\tau(z_0,w_0)=\phi_{p_0}^\iota(z_{0})-h(z_0,w_0)-\theta(w_0)$.
	\medskip

	First, observe that since $M_n=O(n)$ and $\eps_j=O(\frac{1}{j+n})\leq O(\frac{1}{n})$ we have
	$$ \sum_{j=k_n}^{M_n-\ell_n-1}\tilde{A}(\eps_{j},w_j)=\left(\sum_{j=k_n}^{M_n-\ell_n-1} \Lambda \eps_j^2\right)+O(\frac{1}{n^{1+\delta}})=\left(\sum_{j=k_n}^{M_n-\ell_n-1} \Lambda \eps_j^2\right)+o(\frac{1}{n})$$
	
	Next, define the functions $$\delta_1(j):=\frac{1}{n+j},$$
	 $$\delta_2(j):= -\frac{(1-a_{3,0}+\frac{a_{2,1}}{2})\ln(n+j)+\frac{a_{2,1}}{2}\ln{n}+\tau(z_0,w_0)}{(n+j)^2},$$
	$$\delta_3(j):= -\frac{a_{2,1}\ln(\frac{1}{ c} \sin(c\ln(1+\frac{j}{n})))}{(n+j)^2},$$

	 $$\delta_4(j):=\eps_j-\delta_1(j)-\delta_2(j)-\delta_3(j)=O\left(\frac{\ln^2 n}{n^3}\right).$$
	  Then 
	$\sum_{j=k_n}^{M_n-\ell_n-1} \delta_4(j) =O\left(\frac{\ln^2 n}{n^2}\right)=o\left(\frac{1}{n}\right),$ 
	and therefore
		$$\sum_{j=k_n}^{M_n-\ell_n-1} \eps_j  = \left( \sum_{j=k_n}^{M_n-\ell_n-1} \delta_1(j)+\delta_2(j) +\delta_3(j)\right) + o\left(\frac{1}{n}\right). $$ 
	
	Furthermore, by the Euler-MacLaurin formula applied to $\delta_1+\delta_2+\delta_3$, we get

		\begin{align*}
			\sum_{j=k_n}^{M_n-\ell_n-1} \delta_1(j)+\delta_2(j)+\delta_3(j) &=\int^{M_n-\ell_n}_{k_n} \delta_1(j)+\delta_2(j)+\delta_3(j) dj+ \frac{1}{2}(\delta_1(k_n)-\delta_1(M_n-\ell_n))\\ &\quad \quad \quad +\rho(\delta_1+\delta_2+\delta_3,k_n, M_n - \ell_n -1) \\
			&=\int^{M_n-\ell_n}_{k_n} \delta_1(j)+\delta_2(j)+\delta_3(j) dj+ \frac{1}{2n}(1-e^{-\pi/c})+o\left(\frac{1}{n}\right).
		\end{align*}

	We next compute the two integrals in the above expression:
	\begin{align*}
		\int^{M_n-\ell_n}_{k_n} \delta_1(j) dj&=\ln\left(\frac{n+M_n-\ell_n}{n+k_n}\right)\\
		&=\frac{\pi}{c}+( b_{0,3}-a_{3,0}+a_{2,1})(1-e^{-\frac{\pi}{c}})\frac{\ln n}{n}-e^{-\frac{\pi}{c}}\frac{1}{n}\rho_n-\frac{k_n}{n}-e^{-\frac{\pi}{c}}\frac{\ell_n}{n}\\
		&+\frac{1}{2n^2}(k^2_n-e^{-\frac{2\pi}{c}}\ell^2_n)+O\left(\frac{1}{n^{3(1-\nu)}}\right)\\
		&=\frac{\pi}{c}+( b_{0,3}-a_{3,0}+a_{2,1})(1-e^{-\frac{\pi}{c}})\frac{\ln n}{n}-e^{-\frac{\pi}{c}}\frac{1}{n}\rho_n-\frac{k_n}{n}-e^{-\frac{\pi}{c}}\frac{\ell_n}{n}+o\left(\frac{1}{n}\right)
	\end{align*}
	where we have used the fact that $\frac{1}{2}<\nu<\frac{2}{3}$ and that $k^2_n-e^{-\frac{2\pi}{c}}\ell^2_n=O(n^\nu)$. For the other integral, we have
	
	\begin{align*}
		\int^{M_n-\ell_n}_{k_n} \delta_2(j) dj&=((1-a_{3,0}+\frac{a_{2,1}}{2})+\tau(z_0,w_0)+\frac{a_{2,1}}{2}\ln{n})\left(\frac{1}{n+M_n-\ell_n}-\frac{1}{n+k_n}\right)\\
		& +(1-a_{3,0}+\frac{a_{2,1}}{2})\left(\frac{\ln(n+M_n-\ell_n)}{n+M_n-\ell_n}-\frac{\ln(n+k_n)}{n+k_n}\right)\\
		&=\frac{1}{n}(1-a_{3,0}+\frac{a_{2,1}}{2})\left(e^{-\frac{\pi}{c}}-1\right)+\frac{1}{n}\left(e^{-\frac{\pi}{c}}-1\right)\tau(z_0,w_0)+\frac{1}{n}(1-a_{3,0}+\frac{a_{2,1}}{2})e^{-\frac{\pi}{c}}\frac{\pi}{c}\\
		&+(1-a_{3,0}+\frac{a_{2,1}}{2})(e^{-\frac{\pi}{c}}-1)\frac{\ln{n}}{n}+\frac{a_{2,1}}{2}(e^{-\frac{\pi}{c}}-1)\frac{\ln{n}}{n}+o\left(\frac{1}{n}\right)\\
		&=\frac{1}{n}\left((1-a_{3,0}+\frac{a_{2,1}}{2}+\tau(z_0,w_0))\left(e^{-\frac{\pi}{c}}-1\right)+(1-a_{3,0}+\frac{a_{2,1}}{2})e^{-\frac{\pi}{c}}\frac{\pi}{c}\right)\\
		&+(1-a_{3,0}+a_{2,1})(e^{-\frac{\pi}{c}}-1)\frac{\ln{n}}{n}+o\left(\frac{1}{n}\right).
	\end{align*}

For the last integral, we use integration by parts to obtain:

\begin{align*}
		\int^{M_n-\ell_n}_{k_n} \delta_3(j) dj&=a_{2,1}\frac{\ln(c)}{n}(1-e^{-\frac{\pi}{c}})-a_{2,1}\int^{M_n-\ell_n}_{k_n}\frac{ \ln(\sin(c\ln(1+\frac{j}{n})))}{(n+j)^2}dj+o\left(\frac{1}{n}\right) \\
	&=a_{2,1}\frac{\ln(c)}{n}(1-e^{-\frac{\pi}{c}})-\frac{a_{2,1}}{n\cdot c}\int^{\pi-\frac{k_n c}{n}}_{\frac{k_n c}{n}}e^{-u/c}\ln(\sin(u))du+o\left(\frac{1}{n}\right)\\
&=a_{2,1}\frac{\ln(c)}{n}(1-e^{-\frac{\pi}{c}})-\frac{a_{2,1}}{n\cdot c}\int^{\pi}_{0}e^{-u/c}\ln(\sin(u))du+o\left(\frac{1}{n}\right)\\
%-a_{2,1}\int^{M_n-\ell_n}_{k_n} \frac{c\cot(c\ln{(1+\frac{j}{n})})}{(n+j)^2}dj+o\left(\frac{1}{n}\right)\\
%&=-\frac{a_{2,1}}{n}\int_{ck_n/n}^{\pi -ck_n/n+O(\ln{n}/n)}e^{-x/c}\cot(x)dx+o\left(\frac{1}{n}\right)
	\end{align*}

		Therefore, we have:
		\begin{align*}
			\sum_{j=k_n}^{M_n-\ell_n-1} \eps_j &= \frac{\pi}{c}+( b_{0,3}-1)(1-e^{-\frac{\pi}{c}})\frac{\ln n}{n}-e^{-\frac{\pi}{c}}\frac{1}{n}\rho_n-\frac{k_n}{n}-e^{-\frac{\pi}{c}}\frac{\ell_n}{n} \\
      &+\frac{\tau(z_0,w_0)}{n}\left(e^{-\frac{\pi}{c}}-1\right)+\frac{C_1}{n}+o\left(\frac{1}{n}\right).
		\end{align*}

		Next, observe that $\eps_j^2 =\delta_1^2(j)+O\left(\frac{\ln n}{n^3}\right)$,
		so that 
		$$\sum_{j=k_n}^{M_n-\ell_n-1} \eps_j^2 =\left(\sum_{j=k_n}^{M_n-\ell_n-1} \delta_1(j)^2\right) + O\left(\frac{\ln n}{n^2}\right). $$
		By the Euler--Maclaurin formula, we have:
		\begin{align*}
			\sum_{j=k_n}^{M_n-\ell_n-1} \delta_1(j)^2 
			&=\int^{M_n-\ell_n}_{k_n}\frac{1}{(n+j)^2}dj +O\left(\frac{1}{n^2}\right)\\
			&= \frac{1}{(n+k_n)}-\frac{1}{(n+M_n-\ell_n)} + o\left(\frac{1}{n}\right)\\
			&=\frac{1}{n}(1-e^{-\frac{\pi}{c}}) + o\left(\frac{1}{n}\right).
		\end{align*}	
		Therefore: 
		$$\sum_{j=k_n}^{M_n-\ell_n-1} \eps_j^2 = \frac{1}{n}(1-e^{-\frac{\pi}{c}}) + o\left(\frac{1}{n}\right).$$

	Putting everything together we obtain 		
	
	\begin{align*}
		W_{M_n-\ell_n}&= W_{k_n}+\left(\sum_{j=k_n}^{M_n-\ell_n-1} \eps_j + \Lambda \eps_j^2\right) +o(\frac{1}{n})\\
		&=
		\frac{\pi}{c}-e^{-\frac{\pi}{c}}\frac{\ell_n}{n}+(1- b_{0,3})e^{-\frac{\pi}{c}}\frac{\ln n}{n}+{\frac{k_n^2}{2n^2}}\\
&+\frac{1}{n}\left(\phi^\iota_{{q_0}}(w_{0}) +\tau(z_0,w_0)\left(e^{-\frac{\pi}{c}}-1\right)+C_2-e^{-\frac{\pi}{c}}\rho_n \right)+o\left(\frac{1}{n}\right),
	\end{align*}
where
	\begin{align*}
  C_2&=E^\iota+C_1+\Lambda(1-e^{-\frac{\pi}{c}})\\
 C_1&=(1-a_{3,0}+\frac{a_{2,1}}{2})(e^{-\frac{\pi}{c}}-1+\frac{\pi}{c}e^{-\frac{\pi}{c}})+(1-e^{-\frac{\pi}{c}})(a_{2,1}\ln c+\frac{1}{2})-a_{2,1}\int_{0}^\frac{\pi}{c}e^{-u}\log{ \sin(cu)}du.
 \end{align*}

\end{proof}

\begin{rem} Note that $G_n=-e^{-\frac{\pi}{c}}\ell_n+o(\ell_n)$ and therefore we have $\re(G_n)<0$ for all large $n$.
\end{rem} 

\color{black}

\subsection{Exiting the eggbeater}

For the proof of the following lemma, see \cite[Lemma 5.11]{ABT}.

\begin{lem}[Comparison with outgoing Fatou coordinates]\label{lem:exit}
	We have $w_{M_n-\ell_n} \in \mathcal{P}^o_R$, and
	$$\frac{1}{\eps_{M_n-\ell_n}}\left(\Phi_{\eps_{M_n-\ell_n}}(w_{M_n-\ell_n})-\frac{\pi}{c}\right)=\phi^o_{q_0}(w_{M_n-\ell_n})+{e^{-\frac{\pi}{c}}\frac{\ell_n^2}{2n}} +(1- b_{0,3}) \ln n + E^o + o(1)$$
	where 
	$E^o:=(1- b_{0,3}) \left(\frac{\pi}{c}-\ln c -e^{\frac{\pi}{c}} \int_{\pi/2c}^{\pi/c} e^{-u} \ln \sin(cu) du  \right)$.
\end{lem}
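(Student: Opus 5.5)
We mirror the proof of Lemma \ref{lem:comp}, now at the exit end of $\rcal_{\eps}$, i.e.\ near $W=\frac{\pi}{c}$, and use the outgoing coordinate $\psi^o_\eps=\psi_\eps+\frac{\pi}{2c}=\psi^\iota_\eps-\frac{\pi}{c}$.

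\emph{Step 1: locate $w_{M_n-\ell_n}$.} By Lemma \ref{prop:eggb} and Lemma \ref{lem:sumeps}, $W_{M_n-\ell_n}=\frac{\pi}{c}-|\eps_{M_n-\ell_n}|^{1-\nu}+o(\eps^{1-\nu})$, and also $W_{M_n-\ell_n}\in\rcal_{\eps_{M_n-\ell_n}}$. Lemma \ref{lem:invcomp} and Lemma \ref{lem:invphi} then give $w_{M_n-\ell_n}=-c\eps\cot(cW)-\frac{\eps}{2}+\dots$, with $\eps:=\eps_{M_n-\ell_n}$. Since $\pi-cW\approx c\,\eps^{1-\nu}$, we have $\cot(cW)\approx -\frac{1}{c}\eps^{\nu-1}$, so $w_{M_n-\ell_n}\approx \eps^{\nu}$, i.e.\ $w_{M_n-\ell_n}\approx e^{-\frac{\pi}{c}}\ell_n/n\cdot(\dots)$, which is comparable to $1/\ell_n$ and positive. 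Hence it lies in a repelling petal $\mathcal{P}^o_R$ where $\phi^o_{q_0}$ is defined, for $n$ large.

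\emph{Step 2: expand both sides.} Write $u:=w_{M_n-\ell_n}$ and $W':=\psi^\iota_\eps(u)-\frac{\pi}{c}=\psi^o_\eps(u)$. Expanding as in \eqref{eq:asymppsi}, valid since $|\eps/u|\to0$, gives
\[
W'=-\frac{\eps}{u}+\frac{\eps^2}{2u^2}+o(\eps),
\]
which is the analogue of Lemma \ref{prop:error1}. On the other side, $\phi^o_{q_0}(u)=-\frac1u-(1-b_{0,3})\ln(-u)+\dots$, where the branch is chosen consistently. Next, $\Phi_\eps(u)-\frac{\pi}{c}=W'+(b_{0,3}-1)R(\eps,\psi^\iota_\eps(u))$, and by Lemma \ref{lem:DLFc} at $W\to\frac{\pi}{c}$,
\[
R=c\eps e^{W}F_c(W)=\eps\Bigl(\log(\pi-cW)+e^{\frac{\pi}{c}}\!\int_{\pi/2c}^{\pi/c}\!e^{-u}\log\sin(cu)\,du\Bigr)+o(\eps).
\]
Since $\pi-cW\approx -c\eps/u$, we get $\log(\pi-cW)=\ln c+\ln\eps-\ln u+\dots$.

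\emph{Step 3: match terms.} Divide by $\eps$ and subtract $\phi^o_{q_0}(u)$. The $-1/u$ terms cancel. The term $\frac{\eps}{2u^2}$ is rewritten using $1/u\approx \ell_n e^{-\pi/c}$-type asymptotics together with $\eps\approx e^{-\pi/c}/n$; this yields $e^{-\frac{\pi}{c}}\frac{\ell_n^2}{2n}$, up to $o(1)$ because $\nu<2/3$. The $\ln u$ terms cancel between $R$ and $\phi^o_{q_0}$. The $\ln\eps$ term becomes $-\ln n-\frac{\pi}{c}+o(1)$, since $1/\eps=e^{\pi/c}n(1+o(1))$; this produces $(1-b_{0,3})\ln n$ and the $\frac{\pi}{c}$ piece of $E^o$. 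The remaining constants give $-\ln c$ and the integral term in $E^o$.

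\emph{Main obstacle.} The main difficulty is bookkeeping the error terms uniformly. The $o(\eps)$ remainders in $W'$ and in $R$ must be genuinely $o(\eps)$ once $|u|\sim\eps^\nu$, which requires $\nu>1/2$; similarly $\eps^2/u^3$-type terms need $\nu<2/3$. In addition, the branch of $\log$ and the normalization of $\phi^o_{q_0}$ must match the constant $E^o$. Finally, the non-skew correction $a_{2,1}\eps^2 u$ in $p$ only enters through $\eps$, which is already controlled by Lemma \ref{lem:sumeps}, so it should not affect this purely one-step comparison.
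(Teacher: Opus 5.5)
Your overall route is the paper's: the paper simply defers to \cite[Lemma 5.11]{ABT}, the exit-side mirror of Lemma \ref{lem:comp}, which is what you set up. But Step 3 as written does not give the stated right-hand side. The input that fails is the quantitative heart of the lemma, namely the location of $u=w_{M_n-\ell_n}$.

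\emph{The $\ell_n^2$ term.} Since $\ell_n^2/n\sim n^{2\nu-1}\to\infty$, you need $\frac{\eps}{2u^2}$ to within $o(1)$, so $1/u$ must be known with relative error $o(n^{1-2\nu})$. The crude location in Step 1 ($W\approx\frac{\pi}{c}-|\eps|^{1-\nu}$) fixes $u$ only up to a constant factor: it gives $u\approx\eps^{\nu}$, whereas in fact $u\approx 1/\ell_n\approx e^{-(1-\nu)\pi/c}\eps^{\nu}$. Your Step 3 value $1/u\approx e^{-\pi/c}\ell_n$ would produce $e^{-3\pi/c}\frac{\ell_n^2}{2n}$ instead of $e^{-\pi/c}\frac{\ell_n^2}{2n}$. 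It looks as if you used $\eps\approx 1/n$ when converting $W'$ into $1/u$, but $\eps\approx e^{-\pi/c}/n$ elsewhere.

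The correct input is Lemma \ref{lem:sumeps}. Set $W':=\psi^o_\eps(u)$. Since $R=O(\eps\ln n)$ and $G_n+e^{-\pi/c}\ell_n=O(k_n^2/n)$, you get $W'=\frac{G_n}{n}+O(\eps\ln n)=-e^{-\pi/c}\frac{\ell_n}{n}+O(n^{2\nu-2})$. Combine this with $W'=-\frac{\eps}{u}+O(\frac{\eps^2}{u^2})$ and $\eps^{-1}=\alpha_0 n-\ell_n+O(\ln n)$. This gives $\frac1u=\ell_n\bigl(1+O(n^{\nu-1})\bigr)$. Hence $\frac{\eps}{2u^2}=\frac{(W')^2}{2\eps}+o(1)=e^{-\pi/c}\frac{\ell_n^2}{2n}+O(n^{3\nu-2})$, and this is one place where $\nu<\frac23$ genuinely enters.

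\emph{The Fatou coordinate.} Your asymptotics for $\phi^o_{q_0}$ has the wrong sign. From $A_0(w)=(b_{0,3}-1)w+O(w^2)$, the functional equation forces $\phi^o_{q_0}(u)=-\frac1u+(1-b_{0,3})\ln u+o(1)$ on the repelling petal. The normalization constant must be zero for $E^o$ to carry no extra term; this matches $\phi^\iota_{q_0}(w)=-\frac1w+(1-b_{0,3})\ln(-w)+o(1)$, implicit in Lemma \ref{lem:comp}. With your $-(1-b_{0,3})\ln(-u)$, the $\ln u$ contributions of $(b_{0,3}-1)R/\eps$ and of $-\phi^o_{q_0}(u)$ add up to $(1-b_{0,3})(\ln u+\ln(-u))$ instead of cancelling. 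There are also two smaller slips: $\psi^o=\psi-\frac{\pi}{2c}$, and $\pi-cW\approx +c\eps/u$.

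With these corrections, the rest of your bookkeeping does give exactly $(1-b_{0,3})\ln n+E^o$. That includes $\ln\eps=-\ln n-\frac{\pi}{c}+o(1)$, the constants $-\ln c$ and $-e^{\pi/c}\int_{\pi/2c}^{\pi/c}e^{-u}\ln\sin(cu)\,du$ from Lemma \ref{lem:DLFc}, and your correct remark that the non-skew terms enter only through $\eps$ and $u$.
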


This lemma allows us to conclude the following:
		\begin{align*}
			\phi^o_{q_0}(w_{M_n-\ell_n}) &=\frac{1}{\eps_{M_n-\ell_n}}\left(\Phi_{\eps_{M_n-\ell_n}}(w_{M_n-\ell_n})-\frac{\pi}{c}\right)-e^{-\frac{\pi}{c}}\frac{\ell_n^2}{2n}-(1- b_{0,3}) \ln n - E^o + o(1)\\
			&=e^\frac{\pi}{c}\phi^\iota_{q_0}(w_{0})-\ell_n +(1-e^\frac{\pi}{c})\tau(z_0,w_0)-\rho_n+\Gamma +o(1)
		\end{align*}
		where the first equality follows from Lemma \ref{lem:exit} and the second equality follows from Lemma \ref{lem:sumeps} and Lemma \ref{prop:eggb}. In this computation we used the fact that $\frac{1}{2n}(e^\frac{\pi}{c}k_n^2- e^{-\frac{\pi}{c}}\ell_n^2)=o(1)$.
	
Next recall that 
$$
\phi_{p_0}^\iota(\eps_{M_n-\ell_n} )=n+M_n-\ell_n+\phi_{p_0}^\iota(z_{0})-h(z_0,w_0)-\sum_{j=0}^{k_n-1}g(q_0^{j}(w_0))+ a_{2,1}\frac{\pi}{2c}+o(1)
$$

and that, for $(\eps_j, w_j)\in \D(r,r) \times \D(r,r) \subset \mathcal{B}_{p_0}\times \C$ we have

\begin{equation*}
	\begin{array}{l}
		\phi_{p_0}^\iota(\eps_{j+1})=\phi_{p_0}^\iota(p_0(\eps_{j})+g(w_j)\eps_j^2+O(\eps^3_jw_j))=\phi_{p_0}^\iota(\eps_{j})+1-g(w_j)+O(\eps_jw_j) \\
  \phi_{q_0}^o(w_{j+1})=\phi_{q_0}^o(q_0(w_{j})+b\eps^2_j+O(\eps_j^3,\eps_jw_j^3,\eps_j^2w_j^2))=\phi_{q_0}^o(w_{j})+1+O(\frac{\eps_j^2}{w_j^2},\eps_jw_j)
	\end{array}
\end{equation*}

\begin{lem}\label{lem:compout}
	 For $M_n - \ell_n \leq j \leq M_n$, we have $w_j \in \mathcal{P}_{R}^o$ and 
	$$\phi^o_{q_0}(w_{j}) = \phi^o_{q_0}(w_{M_n-\ell_n}) + j -(M_n-\ell_n)+ o(1).$$
\end{lem}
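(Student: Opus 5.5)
We prove Lemma \ref{lem:compout} by induction on $j$ from $j=M_n-\ell_n$, using the one-step relation for $\phi^o_{q_0}$ displayed just before the statement, i.e.
$$\phi^o_{q_0}(w_{j+1})=\phi^o_{q_0}(w_j)+1+O\!\left(\frac{\eps_j^2}{w_j^2},\,\eps_jw_j\right),$$
valid as long as $w_j$ stays in the outgoing petal $\mathcal{P}^o_R$ and $\eps_j\in\D(r,r)$. The base case $j=M_n-\ell_n$ is Lemma \ref{lem:exit}. The induction hypothesis for all $M_n-\ell_n\le i<j$ will consist of three statements:
\begin{enumerate}[label=(\alph*)]
\item $w_i\in\mathcal{P}^o_R$;
\item $\phi^o_{q_0}(w_i)=\phi^o_{q_0}(w_{M_n-\ell_n})+i-(M_n-\ell_n)+o(1)$;
\item $\eps_i=\frac{1}{n+i+\phi_{p_0}^\iota(z_0)}+O\left(\frac{\ln n}{n^{2}}\right)$, in particular $\eps_i\asymp 1/n$.
\end{enumerate}
For (c), we propagate $\phi^\iota_{p_0}(\eps_{i+1})=\phi^\iota_{p_0}(\eps_i)+1-g(w_i)+O(\eps_iw_i)$ from the formula for $\phi^\iota_{p_0}(\eps_{M_n-\ell_n})$ given above. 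Since $|g(w_i)|=O(|w_i|)$, this only requires $\sum_i|w_i|=O(\ln n)$, which follows from (b).

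\textbf{Size of $w_i$.} From the base case, $\phi^o_{q_0}(w_{M_n-\ell_n})=-\ell_n+O(1)$, because all other terms in the displayed formula before Lemma \ref{lem:compout} are bounded. Hence by (b), $\phi^o_{q_0}(w_i)=-(M_n-i)+O(1)$. Using the asymptotics of the outgoing Fatou coordinate near $0$, this gives
$$w_i=\frac{1}{M_n-i+O(1)}+O\!\left(\frac{\ln(M_n-i)}{(M_n-i)^2}\right),\qquad\text{so}\quad |w_i|\asymp \frac{1}{M_n-i+1}.$$
Write $m:=M_n-i$, which ranges over $0\le m\le\ell_n\asymp n^\nu$. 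Two cases need care. When $m$ is large, $w_i$ is close to $0$ but inside the petal, and $\re\phi^o_{q_0}$ is very negative there, so membership in $\mathcal{P}^o_R$ persists. When $m$ is bounded, $\phi^o_{q_0}(w_i)$ is bounded, so $w_i$ lies in a fixed compact subset of $\mathcal{P}^o_R$ after enlarging $R$; this is where the choice of $R$ in the definition of $\mathcal{P}^o_R$ enters. In either case $|w_i|$ is bounded below by a constant times $1/(m+1)$, which is the only lower bound the error estimate needs.

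\textbf{Error summation.} Summing the error terms over the range gives
$$\sum_{i}\frac{\eps_i^2}{|w_i|^2}\lesssim \frac{1}{n^2}\sum_{m=0}^{\ell_n}(m+1)^2\asymp\frac{\ell_n^3}{n^2}\asymp n^{3\nu-2}\to0,$$
since $\nu<\frac{2}{3}$, and
$$\sum_i|\eps_iw_i|\lesssim\frac1n\sum_{m\le\ell_n}\frac{1}{m+1}=O\!\left(\frac{\ln n}{n}\right).$$
Both sums are $o(1)$ uniformly in $j$ and in $(z_0,w_0)\in K'\times K$, which closes the induction for (b). Since the $o(1)$ in (b) is uniform, (a) also persists at step $j+1$ for $n$ large.

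\textbf{Main obstacle.} The delicate point is the circularity between the bound on $|w_i|$ and the error control, together with the terms coming from the coupling of the two coordinates that are new relative to the skew-product case, namely $O(\eps_j^2w_j^2)$ and $O(\eps_jw_j^3)$ in $q$. The first is absorbed once $|w_i|$ is bounded. For the second we need a lower bound $|w_i|\gtrsim 1/(m+1)$ to control $\eps_i^2/w_i^2$, and this bound must be derived from (b) at earlier steps only, so the inductive hypothesis has to include it explicitly. The threshold $\nu<\frac{2}{3}$ is exactly what makes $\ell_n^3/n^2\to0$.
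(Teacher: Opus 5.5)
Your proposal is correct and follows the paper's argument. Both run an induction from $j=M_n-\ell_n$ using the one-step relation for $\phi^o_{q_0}$, propagate the first-coordinate estimate for $\phi^\iota_{p_0}(\eps_j)$ alongside it, and control the accumulated error by $O(\ell_n^3/n^2)=o(1)$, which is exactly where $\nu<\frac{2}{3}$ enters. The only difference is cosmetic: the paper uses the cruder uniform per-step bound $O(\ell_n^2/n^2)$, whereas you sum $\eps_i^2/|w_i|^2$ with $|w_i|\gtrsim 1/(M_n-i+1)$, and both give the same order.
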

\begin{proof} Recall that $w_{M_n-\ell_n}= \frac{1}{\ell_n} + o\left(\frac{1}{n}\right).$ 
	As in Lemma \ref{lem:init}, we prove inductively that
	 $$\phi^o_{q_0}(w_{j+1})=\phi^o_{q_0}(w_j)+1+ O\left(\frac{\ell_n^2}{n^2} \right)$$ 
	 and
\begin{equation}\label{z-estimate}
\phi_{p_0}^\iota(\eps_{j} )=n+j+\phi_{p_0}^\iota(z_{0})-h(z_0,w_0)-\sum_{i=0}^{k_n-1}g(q_0^{i}(w_0))+ a_{2,1}\frac{\pi}{2c}-\sum_{i=M_n-\ell_n}^{j-1}g(w_i)+o(1)
\end{equation}
	 for all $M_n-\ell_n\leq j\leq M_n-1$. In particular $\phi^o_{q_0}(w_{M_n})=\phi^o_{q_0}(w_{M_n-\ell_n})+\ell_n + o(1)$, where we use the fact that $O\left(\frac{\ell_n^3}{n^2}\right)=o(1)$ since $\ell_n\sim n^{\nu}$ for $\nu\in\left(\frac{1}{2},\frac{2}{3}\right)$. 
\end{proof}

\subsection{Conclusion}

	\begin{proof}[Proof of Theorem \ref{th:maintech}] From the previous subsection, we conclude that 
		$$\phi^o_{q_0}(w_{M_n})= e^{\frac{\pi}{c}}\phi^\iota_{q_0}(w_{0})-\left(e^{\frac{\pi}{c}}-1\right)\phi^{\iota}_p(z_0)+(e^\frac{\pi}{c}-1)\theta(w_0)+\left(e^{\frac{\pi}{c}}-1\right)h(z_0,w_0)-\rho_n +\Gamma+ o(1).$$
		 where
    \begin{multline}\label{eq:Gamma}
			\Gamma:=(e^{\frac{\pi}{c}}-1)\left(\frac{a_{3,0}- b_{0,3}+b_{3,0}-a_{2,1}}{2b}+a_{3,0}-a_{2,1}+\frac{1}{2}(1- b_{0,3})\right)\\
      +(e^{\frac{\pi}{c}}-1)( b_{0,3}-1+a_{2,1})\ln{c} 	+\frac{\pi}{c}( b_{0,3}-a_{3,0}+\frac{a_{2,1}}{2})\\	
      +	e^{\frac{\pi}{c}}(1- b_{0,3}-a_{2,1}) \int_0^{\frac{\pi}{c}} e^{-u} \ln \sin(cu) du.
		\end{multline}
    is obtained from the equation $\Gamma=\alpha_0C_2-E^o$.
    In particular we have 
		$$w_{M_n}=\mathcal{L}\left(e^{\frac{\pi}{c}},\Gamma-\rho_n+\left(e^{\frac{\pi}{c}}-1\right)h(z_0,w_0);z_0,w_0\right)+ o(1)$$
	\end{proof}

\section{Applications of the main Theorem}

In this section we will prove Corollaries \ref{top} and \ref{prop:wd}. Let $P$ be a map of the form \eqref{map1} and throughout this section assume that $g(w)=0$, i.e. $p(z,w)$ contains no $z^2w^{j}$ terms (for $j\geq 1$). See Remark \ref{rem:g-nonzero-obstruction} for an explanation of why this assumption is essential for applications. Furthermore let $\alpha_0,\beta_0$ and $b>\frac{1}{4}$ be as in Theorem \ref{th:maintech} and assume that there exists an $(\alpha_0,\beta_0)$-admissible sequence $(n_k)_{k \geq 0}$ with a convergent phase sequence $(\sigma_k)_{k \geq 0}$ whose limit we denote by $\sigma$.

By definition of $M_n$ and $\rho_n$, we have
		$$M_{n_k}=\lfloor (\alpha_0-1) n_k + \beta_0 \ln n_k\rfloor,$$ and 
		$\rho_{n_k}=\{(\alpha_0-1) n_k + \beta_0 \ln n_k \}$. Therefore, by the definition of an $(\alpha_0,\beta_0)$-admissible sequence,
		there exists a bounded sequence of integers $(m_k)_{k \geq 0}$ such that 
		$$n_{k+1}-n_k = M_{n_k}+m_k,$$
		and the phase sequence of $(n_k)_{k \geq 0}$ is given by
		\begin{align*}
			\sigma_{k}= n_{k+1}-\alpha_0 n_k-\beta_0 \ln n_k &= n_{k+1}- (M_{n_k}+n_k+\rho_{n_k})\\
			&=m_k-\rho_{n_k}.
		\end{align*}
Since $\sigma_k$ converges there exists a non-negative integer $m$ such that $m_k-m\in\{0,1\}$ for all $k\geq0$. Using the notation from the previous section, observe that for all $0\leq j\leq m_k$ we have 
\begin{equation*}
	\begin{array}{l}
		\phi_{p_0}^\iota(\eps_{{M_{n_k}}+j+1})=\phi_{p_0}^\iota(\eps_{{M_{n_k}}+j})+1+O(\eps_{M_{n_k}+j}w_{M_{n_k}+j}) \\
  \phi_{q_0}^o(w_{{M_{n_k}}+j+1})=\phi_{q_0}^o(w_{{M_{n_k}}+j})+1+O\left(\frac{\eps_{{M_{n_k}}+j}^2}{w_{{M_{n_k}}+j}^2},\eps_{{M_{n_k}}+j}w_{{M_{n_k}}+j}\right)
	\end{array}
\end{equation*}
and therefore, by induction, 

\begin{align*}
		\phi_{p_0}^\iota(\eps_{{M_{n_k}}+m_k})&=\phi_{p_0}^\iota(p_0^{n_{k+1}}(z_{0}))-h(z_0,w_0)+o(1) \\
  \phi_{q_0}^o(w_{{M_{n_k}}+m_k})&=
  \phi_{q_0}^o(w_{M_{n_k}})+m_k+o(1)= \phi_{q_0}^o(q_0^{m_k}(w_{M_{n_k}}))+o(1)\\
  &= \phi_{q_0}^o\left(\mathcal{L}\left(e^{\frac{\pi}{c}},\Gamma+m_k-\rho_n+\left(e^{\frac{\pi}{c}}-1\right)h(z_0,w_0);z_0,w_0\right)\right)+o(1).
\end{align*}
It follows that 
\begin{equation}\label{eq:special}
	P^{n_{k+1}-n_k}\left(p_0^{n_k}(z)+\frac{h(z,w)}{n_k^2}+o(n_k^{-2}),w\right) = \left(0, \lcal(\alpha_0, \Gamma + \sigma; z,w) \right) + o(1) 
	\end{equation}

To simplify the notation we write  $ \lcal_z(w):= \lcal(\alpha_0, \Gamma + \sigma; z,w)$. By \cite[Lemma 6.1]{ABT} we know that there exists $(z_0,w_0)\in \bcal_{p_0} \times \bcal_{q_0}$ such that $w_0$ is a super-attracting fixed point of $ \lcal_{z_0}(w)$. Moreover $q_0^k(w_0)$ is a super-attracting fixed point of $ \lcal_{p_0^{k}(z_0)}(w)$ for all $k\geq 0$. 
This implies that we can find $r>0$ such that, for every $z\in \D(z_0,r)$, the map $\lcal_z$ has an attracting fixed point in $\D(w_0,r)$ and these fixed points vary holomorphically with $z$. Thus there is a non-constant holomorphic function $\zeta$ such that $\zeta(z)$ is an attracting fixed point of $\lcal_z$ and $\zeta(z_0)=w_0$. Consequently, $\lcal_z(\D(\zeta(z),r))\Subset \D(\zeta(z),r)$ for all $z\in \D(z_0,r)$. By taking $r>0$ even smaller, we can make sure that the same holds for any sufficiently small holomorphic perturbation of the map $\lcal_z(w)$.

\begin{prop}\label{prop:main} Let $P$ be as in Theorem \ref{th:maintech} with $g(w)=0$ and assume that there exists an $(\alpha_0,\beta_0)$-admissible sequence with a convergent phase sequence $\sigma_k\rightarrow \sigma$. Let $w_0$ be a super-attracting fixed point of $ \lcal_{z_0}(w)$ and set $V_j:=P^{-n_j}\left(p^{n_j}_0(\D(z_0,r))\times \D(w_0,r)\right)$ for some sufficiently small $r$. For all sufficiently large $j$, the set $V_j$ is contained in a Fatou component $\Omega$ of $P$. Moreover, some subsequence of $P^{n_k}|_{\Omega}$ converges to a rank one limit map.

\end{prop}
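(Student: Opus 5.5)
The plan is to show that the iterates of $P$ form a normal family on $V_j$ by following orbits through consecutive blocks of the admissible sequence, using \eqref{eq:special} together with the uniform contraction of $\lcal_z$ near $\zeta(z)$. Fix $j$ large and a point $(z,w)\in V_j$, so that $P^{n_j}(z,w)=(p_0^{n_j}(z'),w')$ with $z'\in\D(z_0,r)$ and $w'\in\D(w_0,r)$. The first coordinate need not equal $p_0^{n_j}(z')$ exactly after further iterations, since $g\equiv0$ only kills the $z^2w^j$ terms and $O(z^3w)$ remains. I would therefore track it in the form $p_0^{n_k}(z')+h_k/n_k^2+o(n_k^{-2})$, with $h_k$ holomorphic and bounded. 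This is exactly the form allowed as input in Theorem \ref{th:maintech}, and \eqref{eq:special} then applies with $h=h_k$. Note that with the phase $\Gamma+\sigma$ appearing there, the correction term $(\alpha_0-1)h$ enters the phase.

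The key step is an induction on $k\ge j$. The claim is that $P^{n_k}(z,w)=(p_0^{n_k}(z')+O(n_k^{-2-\eta}),\,w_k)$ with $w_k\in\D(\zeta(z'),r)$. Here I would use that $\zeta$ is chosen so that $\lcal_{z'}$ maps $\D(\zeta(z'),r)$ compactly into itself, and that small holomorphic perturbations keep this property. To pass from $k$ to $k+1$, apply \eqref{eq:special} with base point $(z',w_k)$. It gives $P^{n_{k+1}-n_k}$ of the current point as $(0,\lcal_{z'}(w_k))+o(1)$ in the second coordinate, where $\lcal_{z'}$ is evaluated with phase perturbed by $(\alpha_0-1)h_k$. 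This is a small perturbation provided $h_k\to0$; otherwise I would absorb $h_k$ into the allowed perturbation class. The image therefore lies again in $\D(\zeta(z'),r)$.

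For the first coordinate, I need the more precise estimate $\phi^\iota_{p_0}(\eps_{M_{n_k}+m_k})=\phi^\iota_{p_0}(p_0^{n_{k+1}}(z'))-h+o(1)$. This comes from the computation just before \eqref{eq:special} combined with the relation $\phi^\iota_{p_0}(p(\eps,w))=\phi^\iota_{p_0}(\eps)+1+O(\eps w)$. The accumulated error is $\sum O(\eps_iw_i)$, which by \eqref{eq:rz} and the sum estimate in the proof of Lemma \ref{prop:eggb} is $o(1)$ in $\phi$-coordinates. In the $z$-coordinate this becomes $o(n_{k+1}^{-2})$, as required for the next step. The uniformity in Theorem \ref{th:maintech} on compacts is essential here. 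To make the induction close for all $k$, the $o(1)$ errors must be uniform over the compact set of parameters $\overline{\D(z_0,r)}\times\overline{\D(\zeta,r)}$ and over the bounded family of admissible $h$. I expect this uniformity, and ensuring that the $h$-dependence does not accumulate, to be the main obstacle. I would handle it by showing $h_{k+1}=o(1)$, so that from some $k$ on the perturbation is negligible and the contraction margin of $\lcal_{z'}$ absorbs the errors.

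Granting the induction, the iterates $P^{n}$ on $V_j$ are uniformly bounded. For $n_k\le n\le n_{k+1}$, the points stay in a fixed neighborhood of the origin by the estimates of the eggbeater passage, namely Lemmas \ref{lem:init}, \ref{prop:eggb} and \ref{lem:compout}. Montel's theorem then gives normality on the open connected set $V_j$, so $V_j$ lies in a Fatou component $\Omega$. Since $V_j\subset V_{j+1}$ up to the dynamics, or at least the $V_j$ intersect the same component for large $j$, a single component $\Omega$ contains all of them. Finally, along $n_k$ the maps $P^{n_k}$ on $V_j$ converge along a subsequence, by normality and contraction, to $(z,w)\mapsto(0,\xi(z'))$. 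Here $\xi(z')$ is the attracting fixed point of the limiting perturbed $\lcal_{z'}$, where $z'$ is determined holomorphically by $(z,w)$ through $P^{n_j}$. Because $\zeta$ is non-constant, $\xi$ is non-constant, so the limit has rank exactly one. By the identity principle, the same holds for the limit on all of $\Omega$.
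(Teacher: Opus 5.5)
Your scheme matches the paper's: induct over passages $n_k\to n_{k+1}$, feed the accumulated first-coordinate correction $h_k$ into the Lavaurs phase via Theorem \ref{th:maintech}, and let the contraction of $\lcal_{z'}$ near $\zeta(z')$ absorb the $o(1)$ errors in $w$. There is, however, a genuine gap at the step you yourself flag as the main obstacle, and the fix you propose does not work. The contraction of $\lcal_{z'}$ acts only on the $w$-coordinate. The Fatou phase of the first coordinate is neutral, so the per-passage increments simply add: $h_k=\sum_{\ell=j}^{k-1}\Delta_\ell$. Knowing only $\Delta_\ell=o(1)$, as you state, gives no bound on $h_k$ that is uniform in $k$. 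The phase $\Gamma+\sigma+(\alpha_0-1)h_k$ can then drift out of the small neighbourhood in which $\lcal_{z'}(\D(\zeta(z'),r))\Subset\D(\zeta(z'),r)$ persists. This is exactly the failure mechanism of Remark \ref{rem:g-nonzero-obstruction}, where the increments are $O(1)$. Your inductive claim $P^{n_k}(z,w)=(p_0^{n_k}(z')+O(n_k^{-2-\eta}),w_k)$, i.e.\ $h_k\to0$, is also false in general. The $h_k$ converge to $\sum_{\ell\ge j}\Delta_\ell$, and there is no reason for this sum to vanish.

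The missing idea is quantitative. You need a summable per-passage rate, uniform on $\D(z_0,r)\times\D(w_0,r)$, such as the paper's $\Delta_\ell=O(n_\ell^{-\nu})$. The sums $\sum O(\eps_iw_i)$ you cite actually give such a rate once you keep it instead of writing $o(1)$. You then combine this with the exponential growth $n_\ell\asymp\alpha_0^\ell$ of an admissible sequence \cite[Lemma 10.1]{ABT}. Together these give $\sup_k|h_k|\le\sum_{\ell\ge j}|\Delta_\ell|=O(n_j^{-\nu})$.

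This geometric tail bound does several jobs:
\begin{itemize}
\item It is why the statement needs $j$ large.
\item It makes every perturbed map $w\mapsto\lcal(\alpha_0,\Gamma+\sigma+(\alpha_0-1)h_k;z',w)$ a uniformly small perturbation of $\lcal_{z'}$, so the trapping closes for all $k$.
\item It gives $|\tilde\zeta-\zeta|\lesssim n_j^{-\nu}$. Your step \emph{because $\zeta$ is non-constant, $\xi$ is non-constant} needs exactly this closeness to yield a rank-one limit.
\end{itemize}

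Finally, your assertion that all the $V_j$ lie in a single Fatou component is unjustified. It is also unnecessary, since the statement only asks that each $V_j$ with $j$ large lie in some Fatou component.
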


	\begin{proof} Define

$$\eps_{k}:=\proj_1(P^{n_{k+j}-n_j}(p_0^{n_j}(z),w)),$$
and 
$$w_{k}:=\proj_2(P^{n_{k+j}-n_j}(p_0^{n_j}(z),w)).$$

We claim that for all sufficiently large $j$ we have
$$\phi^{\iota}_p(\eps_{k}):=n_{k+j}+\phi_{p_0}^\iota(z)+ h(z,w)+o(1) \qquad (\text{ as } k \to +\infty) $$
 and that the sequence
$$\{w_{k}\}_{k\geq 0}\subset \D(w_0,r)$$
 is convergent.
 \medskip

From the proof of Theorem \ref{th:maintech}, we deduce 
$$\phi^{\iota}_p(\eps_{1})=n_{j+1}+\phi_{p_0}^\iota(z)+ h_{j}(z,w)$$
where $h_{j}(z,w)=O\left(\frac{1}{n_j^{\nu}} \right)$ with uniform bounds on $\D(z_0,r)\times \D(w_0,r)$ for some $\nu>0$. By the initial assumption, 
$w_{1}=\lcal_z(w)+o(1)$ and therefore $|w_{1}-\lcal_z(w)|<\lambda|w-\zeta(z)|$ for some uniform $0<\lambda<1$.

In the next step, we obtain 

$$\phi^{\iota}_p(\eps_{2})=n_{j+2}+\phi_{p_0}^\iota(z)+ h_{j}(z,w)+h_{j+1}(z,w)$$
where $h_{j+1}(z,w)=O\left(\frac{1}{n_{j+1}^{\nu}} \right)$ with uniform bounds on $\D(z_0,r)\times \D(w_0,r)$ for some $\nu>0$.
Moreover $w_{2}=\lcal_{z}^1\circ \lcal^0_{z}(w)+o(1)$ where $\lcal_{z}^0:=\mathcal{L}_z$ and $\lcal_{z}^1(w)=\mathcal{L}(e^{\frac{\pi}{c}},\Gamma+\sigma+(\alpha_0-1)h_{j}(z,w);z,w)$, hence $w_{j+2,j}\in\D(w_0,r)$ if we have started with $j$ sufficiently large.

By induction we end up with 
$$\phi^{\iota}_p(\eps_{k})=n_{j+k}+\phi_{p_0}^\iota(z)+ \sum_{\ell=0}^{k-1}h_{j+\ell}(z,w)$$
where all $h_{\ell}(z,w)=O\left(\frac{1}{n_{\ell}^{\nu}} \right)$ have uniform bounds on $\D(z_0,r)\times \D(w_0,r)$.

Moreover we have $w_{k}=\lcal_{z}^{k-1}\circ\ldots \circ \lcal_{z}^0(w)+o(1)$ as $k \to +\infty$ where 
$$\lcal_{z}^i(w):=\mathcal{L}(e^{\frac{\pi}{c}},\Gamma+\sigma+(\alpha_0-1)\sum_{\ell=0}^{i-1}h_{j+\ell}(z,w);z,w).$$

Since $h_{\ell}(z,w)=O\left(\frac{1}{n_{\ell}^{\nu}} \right)$ and $n_{\ell}=O(\alpha_0^{\ell})$ (see \cite[Lemma 10.1]{ABT}) we have 
$$\sum_{\ell=0}^{k-1}h_{\ell+j}(z,w)=O \left(\sum_{\ell=0}^{k-1}\frac{1}{(\alpha_0^{j+\ell})^\nu}\right)=O\left(\frac{1}{n_{j}^{\nu}} \right),$$
hence $\lcal_{z}^k(w)=\lcal_z(w)+O\left(\frac{1}{n_{j}^{\nu}} \right)$ with uniform bounds on $\D(z_0,r)\times \D(w_0,r)$ with respect to $k$. This implies that $w_{k}\in\D(w_0,r)$.
\medskip
 
We can also conclude that there exists a holomorphic function $h$ on $\D(z_0,r)\times \D(w_0,r)$ such that $\lcal_{z}^k(w)=\mathcal{L}(e^{\frac{\pi}{c}},\Gamma+\sigma+(\alpha_0-1)h(z,w);z,w)+o(1)$.  In particular it follows that for all sufficiently large $j$, there is a non-constant holomorphic function $\tilde{\zeta}:\D(z_0,r)\rightarrow \D(w_0,r)$ with $|\tilde{\zeta}(z)-\zeta(z)|\lesssim \frac{1}{n_{j}^{\nu}}$ such that
\begin{equation}\label{eq:lim}
  \lim_{k\rightarrow\infty} P^{n_k-n_j}(p_0^{n_j}(z),w)=(0,\tilde\zeta(z))
\end{equation}
uniformly on $\D(z_0,r)\times \D(w_0,r)$.

\medskip
Normality of the family of iterates of $P$ on $V_j$ follows from the proof of Theorem \ref{th:maintech}, which gives uniform estimates along the orbit of the compact set between the times $n_k$ and $n_{k+1}$ for each $k\geq 0$. In particular, if $\mathbb{B}(0,R)$ is a Euclidean ball centered at the origin and containing $p_0^{n_j}(\D(z_0,r))\times \D(w_0,r)$ then 
$P^{n_j+k}(V_j) \subset \mathbb{B}(0,R)$ for all $k\geq 0$. Therefore, there is a Fatou component $\Omega$ of $P$ that contains $V_j$. From \eqref{eq:lim} and the identity principle, we conclude that some subsequence of $P^{n_k}|_{\Omega}$ converges to a rank one limit map.

\end{proof}

 We first briefly describe the idea of the proof of Corollary \ref{top}. We have seen that for a fixed point of the Lavaurs map $\lcal_{z_0}(w)$, the time needed to pass through the eggbeater (gates) and to return approximately to its initial position grows asymptotically like $n_{k+1}^1-n_k^1\sim\alpha_1^k$. On the other hand, if $P_2$ is conjugate to $P_1$, one should expect that this return time stays the same, but we know that return times for $P_2$ grow asymptotically like $n_{k+1}^2-n_k^2\sim\alpha_2^k$ which leads to a contradiction if $\alpha_1\neq \alpha_2$. A similar argument is then used for $\beta_i$'s.

\subsection{Proof of Corollary \ref{top}}

To simplify notation, write
$$
    p_i(z):=\proj_1(P_i(z,0)),
    \qquad
    q_i(w):=\proj_2(P_i(0,w)),
    \qquad i=1,2.
$$
These are the one-variable parabolic germs denoted previously by $p_0$ and
$q_0$ for the corresponding map $P_i$.

Let $\mathfrak h:U\to V$
be a homeomorphism defined in neighborhoods of the origin such that
$$
    \mathfrak h\circ P_1=P_2\circ\mathfrak h,
    \qquad
    \mathfrak h(0,0)=(0,0).
$$
We prove that
$$
    (\alpha_1,\beta_1)=(\alpha_2,\beta_2).
$$

Let $(n_k^1)$ be an $(\alpha_1,\beta_1)$-admissible sequence whose phase
sequence converges. Choose
$$
    (z_0,w_0)\in \mathcal B_{p_1}\times\mathcal B_{q_1}
$$
such that $w_0$ is a super-attracting fixed point of the Lavaurs map
corresponding to $P_1$. Replacing $(z_0,w_0)$ by
$$
    (p_1^\ell(z_0),q_1^\ell(w_0))
$$
for some sufficiently large $\ell$, we may assume that $(z_0,w_0)\in U$.

By Proposition \ref{prop:main}, there exist $r>0$ and $j\in\mathbb N$ such
that, for every $k\ge0$,
$$
    P_1^k\left(p_1^{n_j^1}(z),w\right)\in U
    \qquad
    \text{for all }(z,w)\in\D(z_0,r)\times\D(w_0,r).
$$
Therefore
\begin{equation}\label{eq:conjn-corrected}
    \mathfrak h\circ
    P_1^k\left(p_1^{n_j^1}(z),w\right)
    =
    P_2^k\circ
    \mathfrak h\left(p_1^{n_j^1}(z),w\right)
    \in V
\end{equation}
for all $(z,w)\in\D(z_0,r)\times\D(w_0,r)$ and all $k\ge0$.

Moreover, by the construction in Proposition \ref{prop:main}, we have
$$
    P_1^{n_k^1-n_j^1}
    \left(p_1^{n_j^1}(z_0),w_0\right)
    \longrightarrow
    (0,\widetilde\zeta(z_0)),
$$
where $\widetilde\zeta(z_0)\neq0$. Set
$$
    (x_0,y_0):=
    \mathfrak h\left(p_1^{n_j^1}(z_0),w_0\right),
    \qquad
    N_k:=n_k^1-n_j^1.
$$
Then, by \eqref{eq:conjn-corrected},
\begin{equation}\label{eq:convp-corrected}
    P_2^{N_k}(x_0,y_0)
    \longrightarrow
    \mathfrak h(0,\widetilde\zeta(z_0)).
\end{equation}

We first show that this limit belongs to the invariant line $\{x=0\}$.
Write
$$
    P_2^m(x_0,y_0)=(x_m,y_m).
$$
By \eqref{eq:conjn-corrected}, the orbit $(x_m,y_m)$ remains in a sufficiently
small neighborhood of the origin. In particular, $y_m$ is bounded and small
for all $m\ge0$. Since $g\equiv0$, the first coordinate of $P_2$ has the
form
$$
    \proj_1 P_2(x,y)=p_2(x)+O(x^3y).
$$
Thus
$$
    x_{m+1}=p_2(x_m)+O(x_m^3y_m).
$$
Applying the incoming Fatou coordinate of $p_2$, we obtain
$$
    \phi_{p_2}^{\iota}(x_{m+1})
    =
    \phi_{p_2}^{\iota}(x_m)+1+O(x_my_m).
$$
Since $y_m=O(1)$ and the orbit remains in the attracting neighborhood, we have
$x_m=O(1/m)$. Hence
$$
    \phi_{p_2}^{\iota}(x_m)
    =
    \phi_{p_2}^{\iota}(x_0)+m+O(\log m).
$$
Equivalently,
$$
    x_m=p_2^m(x_0)+O\left(\frac{\log m}{m^2}\right).
$$
In particular $x_m\to0$. Applying this to the subsequence $m=N_k$, we get
$$
    \proj_1\mathfrak h(0,\widetilde\zeta(z_0))
    =
    \lim_{k\to\infty}x_{N_k}
    =
    0.
$$
Therefore
$$
    \mathfrak h(0,\widetilde\zeta(z_0))=(0,w_3)
$$
for some $w_3\in\mathbb C$.

We next show that $w_3\in\mathcal B_{q_2}\setminus\{0\}$. Since
$$
    \mathfrak h\circ P_1^\ell=P_2^\ell\circ\mathfrak h
$$
and since
$$
    P_1^\ell(0,\widetilde\zeta(z_0))=(0,q_1^\ell(\widetilde\zeta(z_0))),
$$
we have
$$
    P_2^\ell(0,w_3)
    =
    (0,q_2^\ell(w_3))
$$
for every $\ell\ge0$. Moreover this orbit remains in a sufficiently small
neighborhood of the origin. By the one-dimensional Leau--Fatou theory for the
parabolic germ $q_2$, it follows that either $w_3=0$ or
$w_3\in\mathcal B_{q_2}$. Since $\widetilde\zeta(z_0)\neq0$,
$\mathfrak h(0,0)=(0,0)$, and $\mathfrak h$ is injective, we cannot have
$w_3=0$. Hence
$$
    w_3\in\mathcal B_{q_2}\setminus\{0\}.
$$

We have therefore obtained a genuine return sequence for the $P_2$-orbit:
\begin{equation}\label{eq:P2-return}
    P_2^{N_k}(x_0,y_0)=(x_{N_k},y_{N_k})
    \longrightarrow
    (0,w_3),
    \qquad
    w_3\in\mathcal B_{q_2}\setminus\{0\}.
\end{equation}
Moreover,
\begin{equation}\label{eq:roughphase}
    \phi_{p_2}^{\iota}(x_{N_k})
    =
    N_k+O(\log N_k).
\end{equation}

We now prove that $\alpha_1=\alpha_2$. Suppose, by contradiction, that
$\alpha_1<\alpha_2.$ Let
$$
    m_k:=N_{k+1}-N_k=n_{k+1}^1-n_k^1.
$$
Since $(n_k^1)$ is $(\alpha_1,\beta_1)$-admissible, we have
$$
    m_k=(\alpha_1-1)N_k+\beta_1\log N_k+O(1)
    =
    (\alpha_1-1)N_k+O(\log N_k).
$$
Hence, since $\alpha_1<\alpha_2$, there exists $\eta>0$ such that, for all
large $k$,
$$
    k_{N_k}=\lfloor N_k^{\nu}\rfloor<m_k<(\alpha_2-1-\eta)N_k,
$$
where $\nu\in(\frac{1}{2},\frac{2}{3})$.
The second coordinate $y_{N_k}$ belongs to a compact subset $K$ of
$\mathcal B_{q_2}$, since $y_{N_k}\to w_3$. Together with \eqref{eq:roughphase}, this allows us to apply the robust form of the eggbeater
estimate to the orbit segment of $P_2$ starting from  $(x_{N_k},y_{N_k}).$
By the same argument as in the proof of Lemma \ref{prop:eggb} we obtain 
$$
    y_{N_{k+1}}=\prw P_2^{m_k}(x_{N_k},y_{N_k})=O(N_k^{-\nu})=o(1).
$$

This contradicts \eqref{eq:P2-return}, since $y_{N_{k+1}}\to w_3\neq0$.
Therefore $\alpha_1\ge\alpha_2$ and hence by symmetry, $\alpha_1=\alpha_2$.

It remains to prove that $\beta_1=\beta_2$. We write
$$
    \alpha:=\alpha_1=\alpha_2.
$$

Recall that we have constructed a point $(x_0,y_0)$ and a sequence
$$
    N_k:=n_k^1-n_j^1
$$
such that
$$
    P_2^{N_k}(x_0,y_0)=(x_{N_k},y_{N_k})
    \longrightarrow
    (0,w_3),
    \qquad
    w_3\in\mathcal B_{q_2}\setminus\{0\}.
$$
Moreover, the orbit remains in a sufficiently small neighborhood of the origin.

We shall use the following consequence of the estimates from the proof of
Theorem \ref{th:maintech}. Along the above return sequence, the first coordinate
has a bounded phase correction: there exist $\xi\in\mathcal B_{p_2}$ and a
bounded sequence $(h_k)$ such that
\begin{equation}\label{eq:bounded-phase-beta}
    x_{N_k}
    =
    p_2^{N_k}(\xi)+\frac{h_k}{N_k^2}+o(N_k^{-2}).
\end{equation}
Equivalently,
\begin{equation}\label{eq:bounded-fatou-phase-beta}
    \phi_{p_2}^{\iota}(x_{N_k})
    =
    N_k+C+o(1)
\end{equation}
for some constant $C$.

Let us briefly explain why \eqref{eq:bounded-fatou-phase-beta} follows from the
previous estimates. Since $g\equiv0$, the first coordinate of $P_2$ satisfies
$$
    \proj_1P_2(x,y)=p_2(x)+O(x^3y).
$$
Thus, along the orbit,
$$
    \phi_{p_2}^{\iota}(x_{m+1})
    =
    \phi_{p_2}^{\iota}(x_m)+1+O(x_my_m).
$$
After the equality $\alpha_1=\alpha_2$ is known, the intervals
$$
    [N_k,N_{k+1}]
$$
have the correct eggbeater length for $P_2$, up to an $O(\log N_k)$-error.
Applying the fact that $x_m=p_2^m(x_0)+O\left(\frac{\log m}{m^2}\right)$ for all $m\geq 0$ to the proofs of Lemma \ref{lem:init}, Lemma \ref{prop:eggb} and Lemma \ref{lem:compout} the entry, eggbeater, and exit estimates imply
$$
    \sum_{m=N_k}^{k_{N_{k}}-1} x_my_m
    =
    O\left(\frac{\log N_k}{N_k}\right), \quad \sum_{m=k_{N_k}}^{N_{k}-\ell_{N_{k}}-1} x_my_m
    =
    O\left(\frac{\log N_k}{N_k}\right),\quad
\sum_{m=N_{k}-\ell_{N_{k}}}^{N_{k+1}-1} x_my_m
    =
    O\left(\frac{\log N_k}{N_k}\right).
    $$
Consequently, if
$$
    C_k:=\phi_{p_2}^{\iota}(x_{N_k})-N_k,
$$
then
$$
    C_{k+1}-C_k
    =
    O\left(\frac{\log N_k}{N_k}\right).
$$
Since $N_k\sim \alpha^k$ grows exponentially, the series
$$
    \sum_k\frac{\log N_k}{N_k}
$$
converges. Hence $(C_k)$ is a Cauchy sequence, and in particular $C_k=C+o(1)$.
This proves \eqref{eq:bounded-fatou-phase-beta}, and hence
\eqref{eq:bounded-phase-beta}.

We now argue by contradiction. Suppose that
$$
    \beta_1<\beta_2.
$$
Set
$$
    m_k:=N_{k+1}-N_k=n_{k+1}^1-n_k^1.
$$
Since $(n_k^1)$ is $(\alpha,\beta_1)$-admissible, and since
$N_k=n_k^1-n_j^1$, we have
\begin{equation}\label{eq:mk-beta1}
    m_k
    =
    (\alpha-1)N_k+\beta_1\log N_k+O(1).
\end{equation}
On the other hand, the natural exit time for $P_2$, with parameter $N_k$, is
$$
    M_{N_k}^{(2)}
    :=
    \left\lfloor
    (\alpha-1)N_k+\beta_2\log N_k
    \right\rfloor.
$$
Therefore, by \eqref{eq:mk-beta1},
$$
    m_k
    =
    M_{N_k}^{(2)}
    -
    (\beta_2-\beta_1)\log N_k
    +
    O(1).
$$
Since
$$
    \log N_k=o(\ell_{N_k}),
$$
it follows that, for all sufficiently large $k$,
$$
    M_{N_k}^{(2)}-\ell_{N_k}
    <
    m_k
    <
    M_{N_k}^{(2)}.
$$

Now apply Theorem \ref{th:maintech}, Lemma \ref{lem:exit}, and
Lemma \ref{lem:compout} to the orbit segment of $P_2$ starting at    $(x_{N_k},y_{N_k})$.
This is justified by \eqref{eq:bounded-phase-beta}, since the bounded sequence
$h_k$ in \eqref{eq:bounded-phase-beta} only changes the Lavaurs phase by a
bounded amount. In particular, because $m_k$ is
$$
    (\beta_2-\beta_1)\log N_k+O(1)
$$
steps before the natural exit time $M_{N_k}^{(2)}$, the outgoing Fatou
coordinate satisfies
$$
    \phi_{q_2}^{o}(y_{N_{k+1}})
    =
    -(\beta_2-\beta_1)\log N_k+O(1).
$$
Hence
$$
    y_{N_{k+1}}
    \sim
    \frac{1}{(\beta_2-\beta_1)\log N_k}
    \longrightarrow 0.
$$
This contradicts the fact that
$$
    y_{N_{k+1}}\longrightarrow w_3\neq0.
$$
Therefore $\beta_1\geq\beta_2$.

By applying the same argument with the roles of $P_1$ and $P_2$ interchanged,
we obtain the reverse inequality $ \beta_2\geq\beta_1$ and thus $\beta_1=\beta_2$.

\subsection{Proof of Corollary \ref{prop:wd}}
% The conclusion of the corollary follows directly from Proposition \ref{prop:main} and the fact that by Lemma \ref{lem:init} we have $P^{n_k+ \lfloor n_k^\nu\rfloor}|_{V_j}\sim O(\frac{1}{\lfloor n_k^\nu\rfloor})=o(1)$.

By Proposition \ref{prop:main}, for all sufficiently large $j$, the set
$V_j$ is contained in a Fatou component $\Omega$, and a subsequence
$P^{n_k}|_{\Omega}$ converges to a rank-one limit map.

On the other hand, Lemma \ref{lem:init}, applied uniformly on
$p_0^{n_j}(\D(z_0,r))\times \D(w_0,r)$, gives
$$
    P^{n_k+\lfloor n_k^\nu\rfloor}|_{V_j}=O(n_k^{-\nu})\to(0,0)
$$
locally uniformly on $V_j$. Since the family of iterates is normal on
$\Omega$, the same subsequential limit extends to $\Omega$. Thus
$\Omega$ also admits the constant limit map $(0,0)$, which has rank zero.
Hence $\Omega$ admits both rank-one and rank-zero limit maps.

 \begin{rem}
 We expect that the Fatou component $\Omega$ constructed above is wandering, but we have not been able to prove this. Note that there is a longstanding open question about the existence of an invariant Fatou component admitting both a rank-one and a rank-zero limit map, for endomorphisms of $\C^n$. The existence of such domains is known for non-globally defined maps, see \cite[Theorem 31]{LP}.

% If $\Omega$ were non-wandering, then after replacing $P$ by a suitable iterate, this would give an invariant Fatou component admitting both a rank-one and a rank-zero limit map. Such an example would answer a longstanding open question the existence of such domains. 
 \end{rem}

\begin{rem}\label{rem:g-nonzero-obstruction}
The assumption $g\equiv0$ in the applications above is essential. We briefly
explain the obstruction that appears in the general case.

Let $(n_k)$ be an $(\alpha_0,\beta_0)$-admissible sequence with converging
phase sequence. Let $k$ be sufficiently large, and suppose that, for some
$$
    (z_0,w_0)\in K'\times K\Subset
    \mathcal B_{p_0}\times \mathcal B_{q_0},
$$
the first few renormalized returns remain in $K$. More precisely, assume that
for $0\le j\le \ell$,
$$
    w_j
    :=
    \operatorname{proj}_2
    P^{n_{k+j}-n_k}\bigl(p_0^{n_k}(z_0),w_0\bigr)
    \in K.
$$

Recall that the first coordinate satisfies
$$
    \phi_{p_0}^{\iota}(\epsilon_{m+1})
    =
    \phi_{p_0}^{\iota}(\epsilon_m)
    +1-g(w_m)+O(\epsilon_m w_m).
$$
Hence, during one complete passage through the eggbeater, the first-coordinate
Fatou phase receives a contribution from the sum of $g$ along the incoming and
outgoing parts of the $w$-orbit.

The incoming contribution is encoded by 
$$
    \theta(w)
    :=
    \lim_{N\to\infty}
    \left(
    \sum_{j=0}^{N-1} g(q_0^j(w))
    +
    a_{2,1}\log N
    \right).
$$
There is also an outgoing contribution. Namely, on the outgoing petal, let
$q_0^{-1}$ denote the inverse branch of $q_0$, and define
$$
    \widetilde\theta(w)
    :=
    \lim_{N\to\infty}
    \left(
    \sum_{j=0}^{N-1} g(q_0^{-j}(w))
    -
    a_{2,1}\log N
    \right).
$$
With the normalizations used above, the correction to the first-coordinate phase
after one passage is of the form
$$
    \mathcal E_\sigma(z,w)
    :=
    \theta(w)
    +
    \widetilde\theta\left(
    \mathcal L(\alpha_0,\Gamma+\sigma;z,w)
    \right)
    +
    a_{2,1}\frac{\pi}{2c}.
$$
%Here $\sigma$ denotes the phase limit of the admissible sequence.
Indeed, from the estimate \eqref{z-estimate} we can conclude that
$$
\begin{aligned}
    z_1
    &:=
    \operatorname{proj}_1
    P^{n_{k+1}-n_k}
    \bigl(p_0^{n_k}(z_0),w_0\bigr)                   \\
    &=
    p_0^{n_{k+1}}(z_0)
    +
    \frac{\mathcal E(z_0,w_0)}{n_{k+1}^2}
    +
    o\left(\frac1{n_{k+1}^2}\right).
\end{aligned}
$$
Therefore, by induction, provided that the renormalized $w$-orbit remains in the
compact set $K$, one obtains
$$
    z_j
    =
    p_0^{n_{k+j}}(z_0)
    +
    \frac{
    \sum_{i=0}^{j-1}\mathcal E(z_0,w_i)}
    {n_{k+j}^2}
    +
    o\left(\frac1{n_{k+j}^2}\right),
    \qquad 1\le j\le \ell,
$$
where the $w$-coordinate satisfies
$$
    w_j
    =
    \mathcal L\left(
    \alpha_0,
    \Gamma+\sigma+
    (\alpha_0-1)
    \sum_{i=0}^{j-2}\mathcal E(z_0,w_i);
    z_0,w_{j-1}
    \right)
    +
    o(1),
    \qquad 1\le j\le \ell.
$$
Thus, when $g\not\equiv0$, the successive renormalized return maps need not be
small perturbations of a single Lavaurs map. Instead, their phases are changed
by the accumulated corrections
$$
    \sum_{i=0}^{j-1}\mathcal E(z_0,w_i).
$$
There is no general reason for these partial sums to remain bounded, let alone
to be summable. Consequently, even if the first few returns remain in a fixed
compact subset of $\mathcal B_{q_0}$, the phase of the Lavaurs map may drift as
the construction is iterated, which breaks down the trapping argument used in Proposition \ref{prop:main}.
\end{rem}

\section{Example}

As announced in the introduction, we now present an example of a polynomial self-map of $\C^3$ which is not tangent to the identity and nevertheless admits a wandering domain. Moreover, the limit sets of this wandering domain are non-contractible.

By \cite[page 566]{ABT}, the map
$$
P(z,w)=\left(z+z^2,w+w^2+\left(1+\frac{2\pi i}{\ln 2}\right)zw\right)
$$
admits a wandering domain $\Omega\subset\mathbb C^2$. Notice that this form can be obtained from \eqref{2jet} by a simple conjugation using an elementary automorphism.
\medskip

Let $ F(z,w,t)=(P(z,w),t)$ which has a wandering domain $\Omega\times\mathbb C$. Let $S(z,w,t)=(zt,wt,t)$ be an automorphism of $\mathbb C^2\times\mathbb C^*$ and define a map $\widetilde F:=S^{-1}\circ F\circ S$ 
which is given by
$$
    \widetilde F(z,w,t)
    =
    \left(
    z+z^2t,
    w+w^2t+\left(1+\frac{2\pi i}{\log 2}\right)zwt,
    t
    \right).
$$
We define $U:=S^{-1}(\Omega\times\mathbb C^*)$ and observe that it is is a wandering domain for $\widetilde F$. Indeed, $\widetilde F(z,w,0)=(z,w,0)$.

Now let $|\lambda|=1$, with $\lambda$ not a root of unity, and define
$$
    T(z,w,t)=(\lambda z,\lambda w,\lambda^{-1}t),
    \qquad
    G:=T\circ\widetilde F.
$$
Then $T$ commutes with $\widetilde F$, so
$$
    G^n=T^n\circ\widetilde F^n.
$$
We claim that $U$ is also wandering for $G$. Indeed, for $t\neq 0$ we have
$$
    S\circ T(z,w,t)
    =
    S(\lambda z,\lambda w,\lambda^{-1}t)
    =
    (zt,wt,\lambda^{-1}t)
$$
and thus
$$
    R(Z,W,t):= S\circ T\circ S^{-1}(Z,W,t)
    =
    (Z,W,\lambda^{-1}t).
$$
Next observe that
$$
    S\circ G^n
    =
    S\circ T^n\circ\widetilde F^n
    =
    R^n\circ F^n\circ S.
$$
and therefore
$$
    S(G^n(U))
    =
    R^n\bigl(F^n(\Omega\times\mathbb C^*)\bigr)
    =
    R^n\bigl(P^n(\Omega)\times\mathbb C^*\bigr)
    =
    P^n(\Omega)\times\mathbb C^*.
$$
Since $\Omega$ is wandering for $P$, the sets $P^n(\Omega)\times\mathbb C^*$ are pairwise disjoint. By applying $S^{-1}$, we conclude that the sets $G^n(U)$
are pairwise disjoint. Finally, since $G(z,w,0)=(\lambda z, \lambda w,0)$, we can conclude that $U$ is a Fatou component for $G$ and hence a wandering domain.

\medskip

We now describe the limit maps. Let $P^{n_j}|_{\Omega}\to H$ locally
uniformly, where $H=(0,h)$. Passing to a subsequence if necessary, we may
assume that
$$
    \lambda^{-n_j}\to\mu\in S^1.
$$
Then, for $(z,w,t)\in U\subset\C^2\times\C^*$,
$$
    S\circ G^{n_j}(z,w,t)
    =
    \left(
    P^{n_j}(zt,wt),
    \lambda^{-n_j}t
    \right)
    \longrightarrow
    \left(
    H(zt,wt),
    \mu t
    \right).
$$
Hence
$$
    G^{n_j}(z,w,t)
    \longrightarrow
    \left(
    0,
    \frac{h(zt,wt)}{\mu t},
    \mu t
    \right).
$$
In particular, the image of every such limit map contains a copy of
$\mathbb C^*$ in the $t$-direction. Therefore the corresponding limit set is
non-contractible.

\begin{rem}
The map $G$ is a polynomial map of $\C^3$ but it does not lift to an endomorphism of $\mathbb{P}^3$. The existence of an endomorphism with such properties remains an open question. One could try with $\tilde{G}(z,w,t):=G(z,w,t)+(z^3,w^3,t^3)$ with $\lambda$ satisfying a suitable Diophantine condition ensuring that the map $ t\mapsto \lambda^{-1}t+t^3$ is linearizable near the origin. After performing an appropriate non-autonomous conjugation $\tilde{G}$, the problem is reduced to proving the existence of a wandering domain for a non-autonomous sequence of maps in two variables,
$$
\hat{G}_n(z,w)=\varphi_n\circ g_n(z,w),
$$
where
$$
g_n(z,w)=
\left(
z+z^2+a_n(t)z^3,
w+w^2+czw+a_n(t)w^3
\right)
$$
and
$$
\varphi_n(z,w)=b_n(t)\cdot (z,w),
$$
with
$$
b_n(t)=1+O(\lambda^{-n}t),
\qquad
\prod_{n=0}^\infty b_n(t)=1+O(t).
$$
By the results of \cite{ABT}, each map $g_n$ admits a wandering domain. We also strongly believe that these results imply the existence of a wandering domain for the non-autonomous sequence $(g_n)_{n\geq 0}$. To see if the existence of a wandering domain is preserved for the perturbed non-autonomous system $\{\hat{G}_n\}_{n\geq 0}$, one would need to redo the computations of \cite{ABT} to get more precise error estimates. We leave this question to the interested reader.

\end{rem}

\bibliographystyle{amsplain}
\bibliography{bibliography}

\end{document}